\DeclareMathAlphabet{\mathcalold}{OMS}{cmsy}{m}{n}
\DeclareMathAlphabet{\bmathcalold}{OMS}{cmsy}{b}{n}
\DeclareMathAlphabet{\mathbcal}{OMS}{cmsy}{b}{n}
\numberwithin{equation}{section}
\newtheorem{lemma}{Lemma}[section]
\newcommand{\rmd}{\mathrm{d}}
\newcommand{\rmT}{\mathrm{T}}
\newcommand{\Qe}{Q_{\mathrm{e}}}
\newcommand{\Qf}{Q_{\mathrm{f}}}
\newcommand{\Qu}{Q_{\mathrm{u}}}
\newcommand{\qe}{q_{\mathrm{e}}}
\newcommand{\qu}{q_{\mathrm{u}}}
\newcommand{\qf}{q_{\mathrm{f}}}
\newcommand{\Xf}{X_{\mathrm{f}}}
\newcommand{\bC}{\boldsymbol{C}}
\newcommand{\bPhi}{\boldsymbol{\Phi}}
\newcommand{\bR}{\boldsymbol{R}}
\newcommand{\bSf}{\boldsymbol{S}_{\mathrm{f}}}
\newcommand{\bS}{\boldsymbol{S}}
\newcommand{\bp}{\boldsymbol{p}}
\newcommand{\bzero}{\boldsymbol{0}}
\newcommand{\sme}{\sigma_{\mathrm{e}}}
\newcommand{\vhs}{v_{\mathrm{hs}}}
\newcommand{\sgn}{\operatorname*{sgn}} 
\newcommand{\diag}{\operatorname*{diag}} 
\newcommand{\Upw}{\operatorname*{Upw}}
\definecolor{ros}{RGB}{148,35,9}   
\newcommand{\bCf}{\bC_{\rm f}}
\newcommand{\br}{\boldsymbol{r}}
\newcommand{\bLambda}{\boldsymbol{\Lambda}}
\newcommand{\bsigmaC}{\boldsymbol{\sigma}_\mathrm{\!\bC}}
\newcommand{\bsigmaS}{\boldsymbol{\sigma}_\mathrm{\!\bS}}
\title[Numerical schemes for 
 sequencing batch reactors]{Numerical schemes for a moving-boundary
convection-diffusion-reaction~model of sequencing~batch~reactors}
\author[B\"urger]{Raimund B\"urger$^{\mathrm{A}}$}
\author[Careaga]{Julio Careaga$^{\mathrm{B}}$}
\author[Diehl]{Stefan Diehl$^{\mathrm{C}}$}
\author[Pineda]{Romel Pineda$^{\mathrm{A},*}$}
\DeclareMathAlphabet\mathbfcal{OMS}{cmsy}{b}{n}
\date{\today} 
\thanks{$^{*}$Corresponding author.} 
\thanks{$^{\mathrm{A}}$CI$^{\mathrm{2}}$MA and Departamento de Ingenier\'{\i}a Matem\'{a}tica,
Facultad de Ciencias F\'{\i}sicas y Matem\'{a}ticas, Universidad de
Concepci\'{o}n, Casilla 160-C, Concepci\'{o}n, Chile.
  E-Mail: 
{\tt rburger@ing-mat.udec.cl}, {\tt rpineda@ing-mat.udec.cl}}  
\thanks{$^{\mathrm{B}}$Department of Mathematics, Radboud University, Heyendaalseweg 135, 6525 AJ Nijmegen, The Netherlands. E-Mail:  {\tt jcareaga@science.ru.nl}}
\thanks{$^{\mathrm{C}}$Centre for Mathematical Sciences, Lund University, P.O. Box 118, S-221 00 Lund, Sweden. 
 E-Mail:  {\tt stefan.diehl@math.lth.se}}
\keywords{Sequencing batch reactor, convection-diffusion-reaction model, moving boundary, monotone scheme, semi-implicit method, invariant region principle} 
\subjclass{65M06, 35K57,  35Q35} 
\begin{document}

%%
%% Start line numbering here if you want
%%
%\linenumbers

\begin{abstract}
 Sequencing batch reactors (SBRs) are devices widely used in wastewater treatment, chemical engineering, and other areas. 
  They allow for the sedimentation and compression of solid particles of biomass simultaneously with biochemical reactions 
   with nutrients dissolved in the liquid.  The kinetics of these reactions may be given by one of the established 
    activated sludge models (ASMx). An SBR is operated in various stages and is equipped with a movable extraction and fill 
     device and a discharge opening. A one-dimensional model of this unit can be formulated as a moving-boundary 
      problem for a degenerating system of convection-diffusion reaction equations whose unknowns are the concentrations of the components forming the 
       solid and liquid phases, respectively. This model is transformed to a fixed computational domain and is discretized by 
        an explicit monotone scheme along with an alternative semi-implicit variant. The semi-implicit variant is based 
         on solving, during each time step, a system of nonlinear equations for the total solids concentration followed 
          by the solution of linear systems for the solid component  percentages and liquid component concentrations. 
          It is demonstrated that the semi-implicit scheme is well posed and that 
          both variants 
           produce approximations  that satisfy an invariant region principle:  
          solids concentrations are nonnegative and less or equal to a set maximal one, percentages are nonnegative and 
           sum up to one, and substrate concentrations are nonnegative. These properties are achieved under a  Courant-Friedrichs-Lewy (CFL) 
                    condition   that is less restrictive for the semi-implicit than the explicit variant. 
          Numerical examples with realistic parameters illustrate that as a consequence, the semi-implicit 
           variant is more efficient than the explicit one. 
\end{abstract}
\maketitle

\section{Introduction}

\subsection{Scope} 

Sequencing batch reactors (SBRs) are widely used 
  in the pharmaceutical~\cite{Wang2011, Popple2016}, petrochemical~\cite{Caluw2016}, and  chemical~\cite{Hu2005}  industries 
   as well as in  wastewater treatment~\cite{Amin2014,Song2021}. An SBR is a tank designed for 
the sedimentation of a suspension composed of solid particles of biomass that react with substrates (nutrients) dissolved in a liquid. 
 This process is a fundamental stage    of   purification   in wastewater treatment plants.
Due to the living biomass (activated sludge; bacteria), biochemical reactions always occur. %  whether they are  advantageous or not.
 In particular, these reactions are the basis of the well-known activated sludge process in wastewater treatment. 
The simultaneous process of sedimentation and reactions, called reactive settling, occurs both in plants with continuously operated secondary settling tank (SST) and in SBRs when such are used.
We study here a general model of an SBR, which is operated in a number of sequential stages \cite{metcalf,droste,chen2020book}.
Depending on the stage, the suspension is continuously fed or extracted from the top of its surface by a floating device; see Figure~\ref{fig:zbar}, and concentrated mixture can be extracted from the bottom of the vessel.
The cycle of the SBR takes into account five principal stages: fill, react, settle, draw and idle. The bulk flows at the different stages lead to a moving boundary; see Figure~\ref{fig:SBRfig}.

A nonlinear, strongly degenerating  convection-diffusion-reaction model of an SBR was introduced in~\cite{SDAMM_SBR1} and a numerical scheme defined on a fixed spatial grid in~\cite{SDAMM_SBR2}.
That scheme will here be referred to as SBR2.
Within that approach one needs to carefully track the upper boundary as it moves through the computational grid for the numerical solution.
It is the purpose of the present work to introduce alternative explicit scheme that after a suitable transformation of the time-dependent spatial domain is defined on a fixed computational grid.
Moreover, a semi-implicit variant is introduced that allows for more efficient simulations due to a more favourable Courant-Friedrichs-Lewy (CFL) condition. 

\begin{figure}[t]
 \centering
 \includegraphics[scale=0.6]{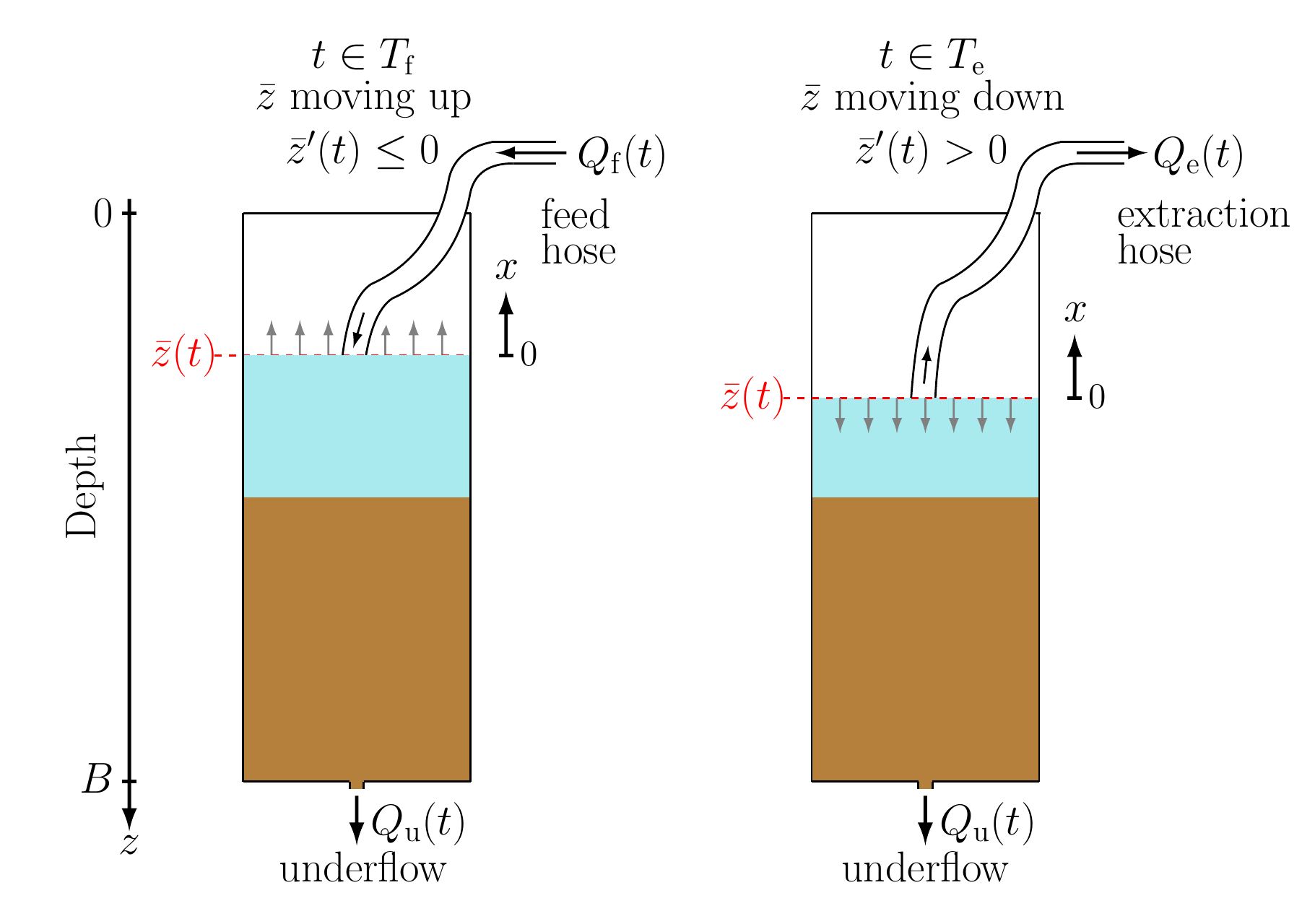}
 \caption{Illustration of an SBR in the two stages that involve flow through the hose:  (left) feed and (right) extraction. 
  The boundary $\bar{z}$  moves upward at time~$t$ when $\bar{z}'(t)<0$ and downward when $\bar{z}'(t)>0$, and $T_{\rm f}$ and 
    $T_{\rm e}$ are the feed and extraction time intervals, respectively.} \label{fig:zbar}
\end{figure}%

The moving boundary, denoted by $\bar{z}\coloneqq\bar{z}(t)$, where $t\geq 0 $ is
  time, is a known function given by the bulk flows acting on the SBR. 
The curve $t \mapsto \bar{z}(t)$ defines a spatial change of variables, where in the new variable $\xi$, the top and bottom boundaries are 
      located at~$\xi=0$ and $\xi=1$, respectively.  
The sought vector unknowns, in the variable $\xi$, are the solid phase and liquid substrate components
\begin{align*}
\bC &\coloneqq \bC(\xi,t) \coloneqq (C^{(1)}(\xi,t),\dots,C^{(k_{\bC})}(\xi,t))^\rmT,\quad 
\bS \coloneqq \bS(\xi,t) \coloneqq (S^{(1)}(\xi,t),\dots,S^{(k_{\bS})}(\xi,t))^\rmT.
\end{align*}
The model equations for $0<\xi<1$ are given by
  \begin{align} \label{syst:intro} \begin{split} 
\partial_t  \boldsymbol{C} + \partial_{\xi} \bigl(  U_{\boldsymbol{C}} 
 (X, \xi, t) \boldsymbol{C} \bigr) 
 & = \partial_{\xi} \bigl( \gamma(\xi) \beta(t)^2 \partial_{\xi} \mathcal{D} (X) 
  \bigr) \\ & \quad +  \beta(t)\bar{z}'(t)\bC + \delta (\xi)\beta(t)\qf(t)\bCf(t) + \gamma(\xi)\bR_{\bC}(\bC,\bS),   \\ 
 \partial_t{\bS}  + \partial_\xi \bigl(U_{\boldsymbol{S}} (X, \xi,t)\boldsymbol{S}\big)  & = 
 \partial_{\xi} \biggl( \frac{ \gamma(\xi) \beta(t)^2}{\rho_X - X}  \partial_{\xi} \mathcal{D} (X) 
  \biggr) \\ & \quad + 
 \beta(t)\bar{z}'(t)\bS + \delta (\xi)\beta(t)\qf(t)\bSf(t)  + \gamma(\xi)\bR_{\bS}(\bC,\bS),  \end{split} 
\end{align} 
where $U_{\boldsymbol{C}}$ and $U_{\boldsymbol{S}}$ are velocity functions related to the velocities of the solid and liquid phases, respectively, $X \coloneqq (C^{(1)} + \cdots + C^{(k_{\bC})})/c$ is the total solids concentration, $c$ is a conversion factor, $\beta$ is a given time dependent function, $\rho_X$~is the (constant) mass density of solids, and $\mathcal{D}(X)$ 
  is an integrated diffusion coefficient  describing compressibility of the sediment. It is assumed 
   that $\mathcal{D} (X) =0$ for $X \leq X_{\mathrm{c}}$, where $X_{\mathrm{c}} >0$ is a critical concentration 
    beyond which the solid particles are assumed to touch each other, so the system \eqref{syst:intro} 
     is of first order wherever $X \leq X_{\mathrm{c}}$, and therefore is strongly degenerate. 
  The 
   second term in the right-hand side of both equations is due to the change of variables in the spatial partial derivative, while the 
    third term contains the feed concentrations~$\bC_{\rm f}$ and~$\bS_{\rm f}$ for the solid and liquid phases, respectively, the feed velocity~$q_{\rm f}$, and the Dirac symbol~$\delta$. The last terms  model the chemical reactions between the components of the solid and liquid phases, 
     where $\bR_{\bC}$ and $\bR_{\bS}$ are vectors of reaction terms, and $\gamma$ is the characteristic function that equals one inside the tank 
       and zero outside. 
The system~\eqref{syst:intro} is coupled to transport equations modelling the concentrations in the extraction pipe connected at $\xi=0$. 
 (The  variables and functions arising in \eqref{syst:intro} are precisely defined in later parts of the paper.)

\begin{figure}[t]
 \centering
 \includegraphics[scale=0.6]{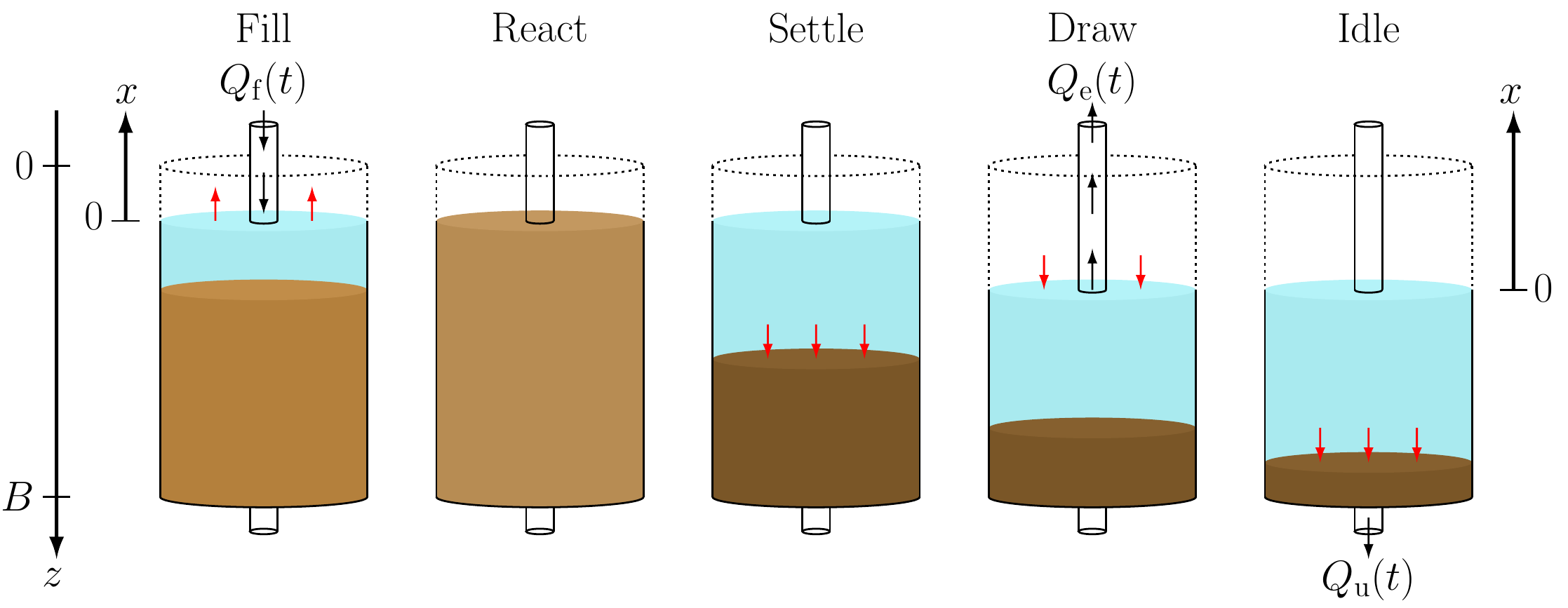}
 \caption{Schematic of the five stages of an SBR. The red arrows show the movement of the boundary or the sediment level during each stage. The black arrows show the direction of the bulk flows $Q_{\rm f}$ (feed), $Q_{\rm e}$ (extraction) and $Q_{\rm u}$ (underflow). 
 } \label{fig:SBRfig}
\end{figure}%

\subsection{Related work}
Several works posed for SBRs have mostly considered the fill and react stages, where a continuously mixed tank is assumed and the biochemical reactions are modelled by ODEs from some of the established activated sludge models (ASM1, ASM2, ASM2d, or ASM3; in short, AMSx)~\cite{Henze2000ASMbook,makinia20}.
 We  refer to handbooks (e.g., \cite{chen2020book,droste,metcalf} for
  a broader introduction to the background of  wastewater treatment.
PDE models for the reactive-settling process have recently been developed~\cite{SDIMA_MOL,SDAMM_SBR1,SDAMM_SBR2,SDm2an_reactive}. One of the first PDE-based models combining the sedimentation-consolidation process for the batch denitrification in wastewater treatment was presented in \cite{SDcace_reactive}. That model consists of five unknowns, two solid components and three substrates, and includes nonlinear terms for the reactions between the solid and liquid components. 
An extension to the case of continuous sedimentation was introduced in \cite{SDm2an_reactive}, where the model was based on the percentages of the concentrations of the solid and liquid phases.
A reliable numerical scheme and simulations of the denitrification process were also included.
The present work also utilizes the concept of percentages for sedimentation models, which was introduced in~\cite{SDmathmeth1}.
In~\cite{SDIMA_MOL}, the authors presented an alternative reactive settling model which also included non-cylindrical vessels. A numerical scheme based on a method-of-lines formulation was developed and the numerical results are compared with the ones of \cite{SDm2an_reactive}. 
Recently, a slightly modified version of that model including extra diffusion terms modelling mixing near the feed inlet has been calibrated to experiments from a pilot plant in \cite{Burger2023}. 

The present SBR model is the one in~\cite{SDAMM_SBR1}, which is a development of the one in \cite{SDIMA_MOL} to take into account the moving boundary due to bulk flows of the SBR stages. 
The settling of particles is allowed in the model in all five stages with the exception of the react one.
A conservative and positivity-preserving numerical scheme developed on a fixed mesh grid was introduced in~\cite{SDAMM_SBR2}.
The scheme of~\cite{SDAMM_SBR2} employs careful mass distributions in cells near the moving boundary to ensure the conservation of mass.
Simulations of the activated sludge model no.~1 (ASM1)~\cite{Henze2000ASMbook} were included in~\cite{SDAMM_SBR2} for the case of a cylindrical vessel.

The trajectory of~$\bar{z}(t)$ is given, and not part of the solution of the problem, so the governing PDE model 
 \eqref{syst:intro} is a moving boundary problem, but not a free  boundary problem. Nevertheless, 
  extensions of the SBR model to a free  boundary problem are conceivable. For one scalar PDE related 
   models of filtration have been studied in~\cite{bfk02,bck01}.  Finally, we mention that the formulation and 
    partial analysis of the  semi-implicit scheme presented herein 
     is based on anlyses of schemes of that type 
      for degenerate parabolic PDE in~\cite{Burger&C&S2006,ekhyp2000}.

\subsection{Outline of the paper} The remainder of this work is organized as follows. 
 In Section~\ref{sec:SBRmodel}, we review the SBR model, starting 
  with assumptions on the tank and the one-dimensional model, and explaining the relation 
   between the moving boundary and bulk flows~(Section~\ref{subsec:onedass}).  
   The one-dimensional model of the reactive-sedimentation process in the tank is obtained from the balance laws of mass of all the particulate and soluble components. The details of the formulation of the model are outlined in 
    Section~\ref{subsec:model}, starting from the solid and liquid components and reactions  within the biokinetic reaction model  
    (as an example, we here employ the model ASM1) followed by a summary of the 
     sedimentation-compression model that has also been employed in previous works. The 
      balance laws for the SBR are coupled with transport equations for the pipe. During the react stage with full mixing, the PDEs are replaced 
       by a system of ordinary differential equations (ODEs). 
         While these model ingredients are reviewed from our previous treatment \cite{SDAMM_SBR1,SDAMM_SBR2}, 
          one of the three decisive novelties of the present approach, namely the  
           transformation of the moving boundary model to a fixed 
            computational domain, is formulated in Section~\ref{model:fixeddomain}. 
             The second novelty is the reformulation of the governing model in terms of the vector of percentages~$\boldsymbol{p}$ 
               (of the solid components, with respect to~$X$) 
               done in Section~\ref{model:percentages}. 

               An explicit numerical  scheme for the solution of the transformed governing model is 
                introduced in Section~\ref{sec:numscheme}.
 This scheme is based on a standard discretization in space and time of the computational domain (Section~\ref{subsec:discr}). 
   The explicit numerical scheme, presented in Section~\ref{subsec:explicheme},   
    is based on the percentage  and fixed-domain formulations of Sections~\ref{model:fixeddomain} and~\ref{model:percentages} 
     and combines upwind discretizations for transport terms, the Engquist-Osher numerical flux \cite{eopaper} for nonlinear flux terms, and 
      a central difference formula for nonlinear degenerating diffusion terms arising in the governing models, 
      combined with  appropriate discretizations of the reaction terms. The scheme 
       is complemented by  a suitable discretization of the ODEs describing the mixing stage (Section~\ref{subsec:num-mix}). 
       It is assumed that the spatial  mesh width~$\Delta \xi$ and the time step~$\tau$ are related by a CFL condition 
        outlined in~Section~\ref{subsec:CFL}, and we prove in Section~\ref{subsec:mon} that this scheme 
         is montone, with the consequence that an  invariant-region principle holds, i.e., 
          solids concentrations are nonnegative and less or equal a maximal one, percentages are nonnegative and 
           sum up to one, and substrate concentrations are nonnegative. The CFL condition for the explicit scheme essentially 
            bounds~$\tau/ \Delta \xi^2$. 

A more favorable CFL condition that only bounds~$\tau/ \Delta \xi$ is associated 
             with a semi-implicit scheme for the governing PDE model, which is the third novelty introduced in Section~\ref{scheme:si}. 
At each time step, it
              consists of a nonlinear semi-implicit scheme for the PDE for~$X$, which is described 
              in Section~\ref{subsec:siX}, where we also demonstrate that the nonlinear equations 
               are well posed, i.e., possess a unique solution that depends continuously on data. 
                The solution of the nonlinear equations is achieved by
               Newton-Raphson method (Section~\ref{subsec:nr}). Once the scalar function $X$~has been updated, one proceeds to update the vectors~$\boldsymbol{p}$ and~$\boldsymbol{S}$. 
                  This is done by an implicit version of the corresponding $\boldsymbol{p}$- and $\boldsymbol{S}$-schemes of 
                   Section~\ref{sec:numscheme}. This  version is, however, linearly implicit 
                    and only requires the solution of one linear system per time step, see Section~\ref{subsec:updperc}. 
                     In Section~\ref{subsec:simon}, it is proved that the 
                      semi-implicit scheme has the same monotonicity and invariant-region properties 
                        as its explicit counterpart (cf.\ Section~\ref{subsec:mon}) but does so under 
                         the more favorable CFL condition. 
                         Numerical examples  that illustrate the performance of the numerical schemes  
                          of Sections~\ref{sec:numscheme} and~\ref{scheme:si} are presented in 
                        Section~\ref{sec:numer}. In particular, it is demonstrated that the semi-implicit scheme indeed 
                         leads to the expected gain in efficiency. Finally, conclusions are collected in Section~\ref{sec:conc}.

\section{A model of a sequencing batch reactor (SBR)}\label{sec:SBRmodel}

We here review the assumptions of the general SBR model in~\cite{SDAMM_SBR1} and refer to~\cite{SDAMM_SBR2} for a full description of the numerical scheme SBR2.
That model will then be slightly reformulated, which allows for a semi-implicit numerical scheme to be developed.
The sedimentation and compaction properties of the flocculated sludge are namely assumed to depend on the total concentration~$X$ and not on the concentrations of the individual components since these are flocculated to larger particles.
Each flocculated particle consists of several individual components, which can be described as percentages of the total concentration~$X$.
For simplicity of writing, we confine here to a constant cross-sectional area~$A$, which is the most common case in the applications.

\subsection{Assumptions on the tank and the 1D model} \label{subsec:onedass} 

The reactor tank is assumed to be a cylindrical vessel with constant horizontal cross-sectional area~$A$; see Figure~\ref{fig:SBRfig}.
We place a  fixed $z$-axis indicating the depth from the top ($z=0$) to the bottom at~$z=B$.
At the surface of the mixture, located at~$z=\bar{z}(t)$,  a floating device connected to a pipe allows one to feed the tank at given volumetric feed flow~$\Qf(t)$ [m$^3$/s] and   feed concentrations~$\bCf(t)$ and $\bSf(t)$.
The floating device can alternatively extract mixture at a given volume rate~$\Qe(t)$ during the draw stage.
One cannot fill and draw simultaneously.
The extraction pipe is modelled by a half-line and the flow through it by a linear advection PDE.
The  concentrations in the pipe are denoted by~$\bC_\mathrm{e}(t)$ and~$\bS_\mathrm{e}(t)$.
At the bottom, $z=B$, one can withdraw mixture at a given volume rate $\Qu(t)\geq 0$, and the corresponding output concentrations there are denoted by~$\bC_\mathrm{u}(t)$ and $\bS_\mathrm{u}(t)$.
We define the bulk velocities
\begin{equation*}
\qu(t)\coloneqq  \Qu(t) / A,\qquad \qe\coloneqq \Qe(t) / A,\qquad \qf\coloneqq \Qf(t)/A.
\end{equation*}

If~$[0,T]$ denotes the total time interval of modelling (and simulation in Section~\ref{sec:numscheme}), we assume that $T\coloneqq {T}_\mathrm{e}\cup{T}_\mathrm{f}$, 
where \begin{align*}
{T}_\mathrm{e}&\coloneqq  \bigl\{t\in\mathbb{R}_+:\Qe(t)>0,\Qf(t)=0 \bigr\},
\qquad 
{T}_\mathrm{f}\coloneqq  \bigl\{t\in\mathbb{R}_+:\Qe(t)=0,\Qf(t)\geq0 \bigr\}
\end{align*}
(such that ${T}_\mathrm{e}\cap{T}_\mathrm{f} = \varnothing$).  
The volume of the mixture at time~$t$ is
\begin{equation}\label{eq:V}
\bar{V}(t)\coloneqq  A\big(B-\bar{z}(t)\big).
\end{equation}
The surface location~$\bar{z}(t)$ is determined by the given volumetric flows, since by differentiation of~\eqref{eq:V}, 
 \begin{align*} 
\bar{z}'(t)=-\frac{\bar{V}'(t)}{A}
=\frac{\Qu(t)-\bar{Q}(t)}{A},
\quad\text{where}\quad
\bar{Q}(t)\coloneqq \begin{cases}
-\Qe(t)<0 & \text{if $t\in T_\mathrm{e}$}, \\
\Qf(t)\geq 0 &  \text{if $t\in T_\mathrm{f}$.} 
\end{cases}
\end{align*}
Clearly, $\bar{z}$ is given at any time, so the model under consideration is a {\em moving} boundary problem, 
 but not a {\em free} boundary problem. That said, we emphasize that because of the volumetric flows, no boundary 
 {\em conditions}  need to be imposed since
  the conservation law implies natural output concentrations when reactions and sedimentation are assumed to  occur inside the tank only.

\subsection{A model of reactive settling with moving boundary} \label{subsec:model}

\subsubsection{Biochemical reaction model and solid and liquid components} 
Two constitutive functions describe  the sedimentation-com\-pres\-sion process of the flocculated particles that consist of several components. 
These functions are stated in terms of the solids in suspension~$X$. 
This quantity equals  the sum of either  all or of most of the particulate concentrations; the precise definition of~$X$ depends on the specific   reaction model. 
Any biochemical reaction model can be used such as one of the  standard ASMx activated sludge models.
Within the ASMx models concentrations are usually expressed in terms of more easily  measurable units such as chemical oxygen demand (COD) (cf.\ Table~\ref{table:AMS1_vari} in Appendix~A), wherefore conversion factors have to be used to obtain the mass concentrations.
We here use the ASM1 model (see Appendix~A), in which the particulate concentrations are (in ASM1 units)
\[
X_\mathrm{I}, X_\mathrm{S}, X_\mathrm{B,H}, X_\mathrm{B,A}, X_\mathrm{P}, X_{\rm ND},
\]
and the corresponding definition of the total suspended solids concentration is 
\begin{equation*}
X \coloneqq  c(X_\mathrm{I} + X_\mathrm{S} + X_\mathrm{B,H} + X_\mathrm{B,A} + X_\mathrm{P}), \quad\text{where $ c=0.75\, {\rm g/(g\, COD)}$}.
\end{equation*}
The concentration $X_{\rm ND}$ is not in the definition of~$X$, since $X_{\rm ND}$ represents the nitrogen that is already part of~$X_\mathrm{S}$.
To ensure (for mathematical reasons) that the total solids concentration~$X$ equals the sum of all particulate components, we replace 
 the variable~$X_\mathrm{S}$ by 
$X_\mathrm{S-ND}\coloneqq X_\mathrm{S}-X_\mathrm{ND}$,
and  define (in ASM1 units)
\begin{align*}
\bC& \coloneqq \big(X_\mathrm{I}, X_\mathrm{S-ND}, X_\mathrm{B,H}, X_\mathrm{B,A}, X_\mathrm{P}, X_{\rm ND}\big)^\rmT \quad 
 \text{(i.e., $k_{\boldsymbol{C}} =6$),} \\
\bS& \coloneqq \big(S_\mathrm{I},S_\mathrm{S},S_\mathrm{O},S_\mathrm{NO},S_\mathrm{NH},S_\mathrm{ND}\big)^\rmT
\quad  \text{(i.e., $k_{\boldsymbol{S}} =6$).}
\end{align*}
Moreover, we define
\begin{equation}\label{eq:Xdef}
X\coloneqq  c \big(C^{(1)}+\cdots + C^{(k_{\bC})}\big),\qquad p^{(k)} X\coloneqq c C^{(k)},\qquad \bp X\coloneqq c\bC.
\end{equation}
Similar conversion factors as~$ c$ appear for the soluble concentrations; however, we will divide these factors away directly, since the left-hand sides of the governing equations to be presented are linear in $\bC$ and $\bS$ apart from the coefficients, which are nonlinear functions of~$X$.

\subsubsection{Reaction terms} 
 The nonlinear reaction terms are given by
\begin{equation*}
\begin{split}
&\bLambda_{\bC}\bR_{\bC}(\bC,\bS),\quad\text{where}\quad\bR_{\bC}(\bC,\bS)\coloneqq\bsigmaC\br(\bC,\bS),\\
&\bLambda_{\bS}\bR_{\bS}(\bC,\bS),\quad\text{where}\quad\bR_{\bS}(\bC,\bS)\coloneqq\bsigmaS\br(\bC,\bS),
\end{split}
\end{equation*}
which model the increase in (COD) concentration per time unit; see Table~\ref{table:AMS1_vari}.
Here, $\bLambda_{\bC}=\operatorname{diag}( c, c, c, c, c,1)$ and $\bLambda_{\bS}$ are diagonal matrices with conversion factors, $\bsigmaC$ and $\bsigmaS$ are constant stoichiometric matrices, and $\br(\bC,\bS)\geq\bzero$ is a vector of nonlinear functions modelling the reaction processes; see Appendix~\ref{app}.

We assume that if a solid component is not present, $p^{(k)}=0$, then no more such can vanish, i.e.\  
$\smash{R_{\boldsymbol{C}}^{(k)}(\bC,\bS)|_{p^{(k)}=0}=0}$, where $\smash{R_{\boldsymbol{C}}^{(k)}}$
denotes the $k$-th component of $\boldsymbol{R}_{\boldsymbol{C}}$. 
Finally, to be able to  establish an invariant-region property for the numerical solution, we make some  additional technical assumptions.
To ensure that  the numerical solution for the solids does not exceed the maximal concentration~$\hat{X}$, we assume that 
\begin{equation}\label{eq:techRC}
\text{there exists an $\varepsilon>0$ such that $\bR_{\bC}(\bC,\boldsymbol{S})=\bzero$ for all $X\geq\hat{X}-\varepsilon$}.
\end{equation}
This condition means that when the concentration~$X$ is near the maximal one~$\hat{X}$, biomass cannot grow any more.
To obtain positivity of component~$k$ of the concentration vector~$\bC$, we let 
\begin{equation*}
I_{\bC,k}^-:=\big\{l\in\mathbb{N}:\sigma_{\bC}^{(k,l)}<0\big\},
\qquad
I_{\bC,k}^+:=\big\{l\in\mathbb{N}:\sigma_{\bC}^{(k,l)}>0\big\},
\end{equation*}
denote the sets of indices~$l$ that have negative and positive stoichiometric coefficients, respectively, and assume that 
\begin{align*}
\text{if $l\in I_{\bC,k}^-$, then $r^{(l)}(\bC,\bS)=\bar{r}^{(l)}(\bC,\bS)C^{(k)}$ with $\bar{r}^{(l)}$ bounded}.
\end{align*}

\subsubsection{Bulk velocities and constitutive equations}  
We define the characteristic function $\chi_\omega=1$ if the statement~$\omega$ is true, otherwise $\chi_\omega=0$.
The characteristic function for the tank is thus $\gamma(z,t)=\chi_{\{\bar{z}(t)<z<B\}}(z)$.
The bulk velocity of the mixture in the tank and below it in the underflow pipe is defined as
$\smash{q(z,t) \coloneqq \qu(t)\chi_{\{z>\bar{z}(t)\}}}$  
and the excess velocity due to sedimentation and compression is
\begin{equation*}%\label{eq:v}
v\coloneqq v(X,\partial_z X,z,t) 
\coloneqq  \gamma(z,t)\vhs(X)\left(
1-\dfrac{\rho_X\sme'(X)}{Xg\Delta\rho}\partial_z{X}
\right)
= \gamma(z,t)\big(\vhs(X) - \partial_z{D(X)}\big)
\end{equation*}
(see \cite{SDwatres3,Burger&K&T2005a}), where
\begin{equation*}
D(X) \coloneqq  \int_{X_c}^{X}d(s)\,{\rm d}s,
\qquad
d(X)\coloneqq \vhs(X)\dfrac{\rho_X\sme'(X)}{gX\Delta\rho}.
\end{equation*}
Here, $\Delta\rho\coloneqq \rho_X-\rho_L$ is the density difference of the flocculated particles and the liquid, $g$ is the acceleration of gravity, $\vhs(X)$ is the hindered-settling velocity function, which is assumed to satisfy 
\begin{align} \label{vhsass} 
\vhs(X) \begin{cases} >0 & \text{for $X\in[0,\hat{X})$,} \\  
= 0 & \text{for $X\geq\hat{X}$}, 
\end{cases} \end{align} 
where~$\hat{X}<\rho_X$ is a maximum packing concentration (for computational purposes).
The second constitutive function is the effective solids stress~$\sme(X)$, which satisfies 
\begin{align*} 
\sme'(X)  \begin{cases} = 0 & \text{for $X \leq X_\mathrm{crit}$,} \\ > 0 &  \text{for $X>X_\mathrm{crit}$,} 
\end{cases} \end{align*}   
where $X_\mathrm{crit}$ is a critical concentration above which the particles touch each other and form a network that can bear a certain stress.

\subsubsection{Balance laws} 
The balance law of each material component gives the  system of PDEs 
\begin{align} \label{finalmod} \begin{split} 
\partial_t{\bC}+\partial_z\big(\mathcal{V}_{\bC}(X,\partial_zX,z,t)\bC\big)& = \delta \bigl(z-\bar{z}(t)\bigr)\qf(t)\bCf + \gamma(z,t)\bR_{\bC}(\bC,\bS), \\
\partial_t{\bS} +\partial_z\big(\mathcal{V}_{\bS}(X,\partial_zX,z,t)\bS\big) 
& = \delta \bigl(z-\bar{z}(t)\bigr)\qf(t)\bSf
+ \gamma(z,t)\bR_{\bS}(\bC,\bS), \end{split} 
\end{align} 
modelling reactive settling for~$z\in\mathbb{R}$, where we  have divided away the COD factors~$c$ etc.\ in each equation, 
 $\delta(\cdot)$ [m$^{-1}$] is the delta symbol, and the total velocities are
\begin{equation*}
\begin{split}
\mathcal{V}_{\bC}=\mathcal{V}_{\bC}(X,\partial_zX,z,t) &\coloneqq  q(z,t) + \gamma(z,t) \big(\vhs(X) - \partial_z{D(X)}\big), \\
\mathcal{V}_{\bS}=\mathcal{V}_{\bS}(X,\partial_zX,z,t) &\coloneqq  q(z,t)
- \gamma(z,t)\frac{ (\vhs(X) - \partial_z{D(X)}) X }{\rho_X - X}.
\end{split}
\end{equation*}
The pipe of extraction is modelled as a half line $x\geq 0$ (upwards) with $x=0$ coupled at $z=\bar{z}(t)$.
Any concentration is assumed to follow the advection equations 
\begin{equation}\label{eq:pipe}
\partial_t\bC_\mathrm{pipe}+\qe(t)\partial_x\bC_\mathrm{pipe}=\boldsymbol{0} , \quad 
\partial_t\boldsymbol{S}_\mathrm{pipe}+\qe(t)\partial_x\boldsymbol{S}_\mathrm{pipe}= \boldsymbol{0}. 
\end{equation} 
The extraction concentrations in the pipe are given by complicated formulas due to the moving boundary; see~\cite{SDAMM_SBR1}.
With the  variable change suggested below, these will be obtained more easily.

\subsubsection{Equations during the react stage} 

During periods of mixing, the system of PDEs~\eqref{finalmod} reduces to the system of ODEs 
%\begin{subequations}\label{eq:mixingCS}
\begin{align*}
\bar{V}(t) \boldsymbol{C}' (t)  & = \big(\Qu(t)-\bar{Q}(t)\big)\bC + 
\Qf(t)\bCf(t) + \bar{V}(t)\bR_{\bC}(\bC,\bS), \\
\bar{V}(t)  \boldsymbol{S}' (t)   & = \big(\Qu(t)-\bar{Q}(t)\big)\bS + 
\Qf(t)\bSf(t) + \bar{V}(t)\bR_{\bS}(\bC,\bS),
\end{align*}
for the homogeneous concentrations in $\bar{z}(t)<z<B$,  
where all concentrations depend only on time since they are averages in the tank.
In the region $z<\bar{z}(t)$ all concentrations are zero, whereas the outlet concentrations are $\bC_{\rm u}(t)=\bC(t)$ and $\bC_{\rm e}(t)=\bC(t)$  (if $Q_\mathrm{e}(t)>0$) (analogously for~$\bS$).

\subsection{Model equations on a fixed domain} \label{model:fixeddomain} 
\begin{figure}[t]
\includegraphics[scale=0.55]{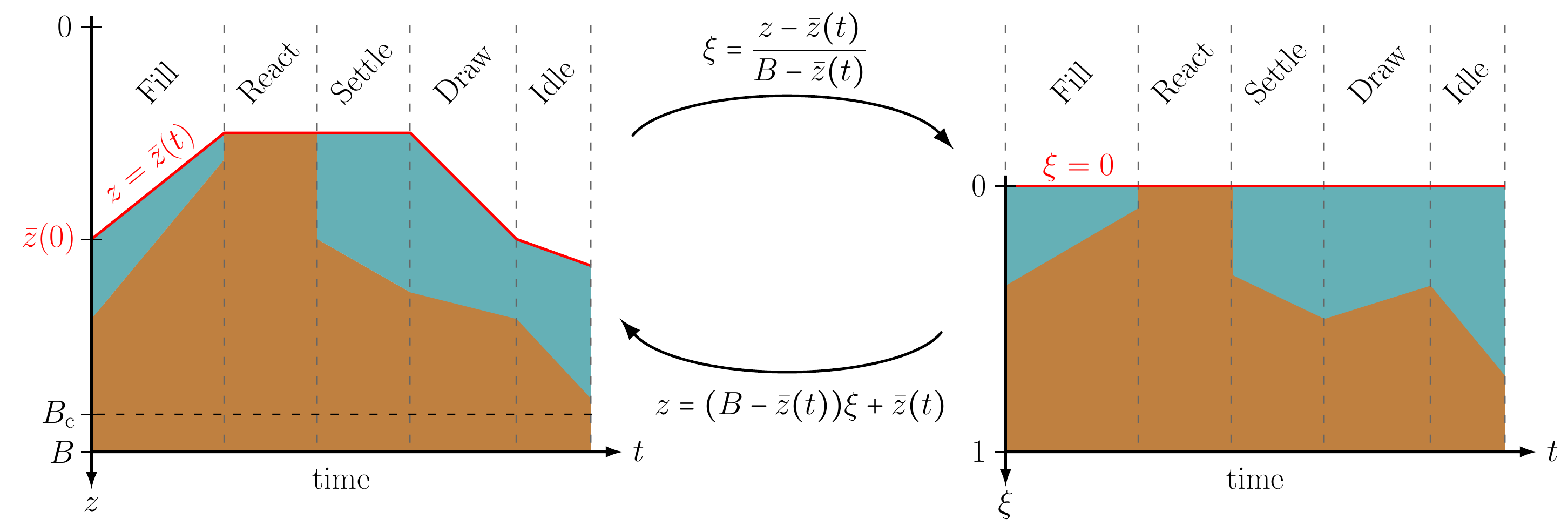}
 \caption{Evolution of a concentration profile varying with respect to time at the five SBR stages in the space variable $z$ (left) and new variable $\xi$ (right). The moving boundary $z=\bar{z}(t)$ (red line) is mapped to the constant line $\xi = 0$. The mappings $z=z(\xi,t)$ and $\xi = \xi(z,t)$ are shown in the middle. } \label{fig:changevar}
\end{figure}

 To solve the model equations \eqref{finalmod}--\eqref{eq:pipe} without complicated formulas for the outlet concentrations, we transform the time-varying interval $[\bar{z}(t),B]$ to the fixed domain $[0,1]$ for all $t\geq 0$ by introducing the space variable, for all $z\in\mathbb{R}$,
\begin{equation}\label{eq:xidef}
 \xi=\xi(z,t) \coloneqq \dfrac{z-\bar{z}(t)}{B-\bar{z}(t)}\quad 
\quad \Leftrightarrow\quad z= (B-\bar{z}(t))\xi+\bar{z}(t).
 \end{equation}
where it is assumed that $B-\bar{z}(t)\geq B_\mathrm{c}>0$ for all $t\geq 0$ for some constant~$B_\mathrm{c}$.
We define the unknowns of the model in the new variable $\xi$ by $\tilde{X}(\xi(z,t),t)\coloneqq X(z,t)$ (analogously for the rest of the unknowns and space dependent functions).
The partial derivatives are 
 \begin{align*}
  \partial_t  \xi  =
  -\bar{z}'(t)\dfrac{1-\xi}{B-\bar{z}(t)} \eqqcolon\alpha(\xi,t),\qquad 
  \partial_z  \xi  = \dfrac{1}{B-\bar{z}(t)}  \eqqcolon \beta(t).
\end{align*}
Clearly,  the sign of $\alpha$ depends uniquely on the slope $\bar{z}'$ and therefore on $t$, while $\beta(t)>0$  for all $t>0$.  
Then the time and space partial derivatives are transformed as
\begin{align*}
 \partial_t  X  &= 
\partial_{\xi}   \tilde{X} \partial_t  \xi  +  \partial_t  \tilde{X}  =  
 \alpha(\xi,t) \partial_{\xi}   \tilde{X}  +  \partial_t  \tilde{X},\quad 
  \partial_z  X   = 
 \partial_{\xi} \tilde{X} \partial_z  \xi  = \beta(t) \partial_{\xi} \tilde{X}  =  \partial_{\xi} \bigl(\beta(t)\tilde{X} \bigr). 
\end{align*}
It will be convenient to rewrite the  term  
\begin{equation*}
 \alpha(\xi,t) \partial_{\xi}   \tilde{X} 
 = \partial_{\xi} \bigl(\alpha(\xi,t)\tilde{X}\bigr)- \partial_{\xi}  \alpha (\xi,t)\tilde{X} 
 = \partial_{\xi} \bigl(\alpha(\xi,t)\tilde{X}\bigr) - \beta(t)\bar{z}'(t)\tilde{X} 
\end{equation*}
(anticipating the transformation of $\partial_t\bC$ in~\eqref{finalmod} etc.). 
The characteristic function becomes
\begin{equation*}
\gamma(z,t) =\chi_{\{\bar{z}(t)<z<B\}}(z)= \tilde{\chi}_{\{0<\xi<1\}}(\xi)=:\tilde{\gamma}(\xi).
\end{equation*}
The delta symbol in \eqref{finalmod} is formally transformed via the Heaviside function~$H$ as 
\begin{align*}
H \bigl(z-\bar{z}(t) \bigr) &=H\big(\xi(B-\bar{z}(t))\big)=:\tilde{H}(\xi),\\
\delta \bigl(z-\bar{z}(t) \bigr)&= H' \bigl(z-\bar{z}(t) \bigr) 
=  \tilde{H}'(\xi) 
= \tilde{H}'(\xi)\beta(t)
= \tilde{\delta}(\xi)\beta(t).
\end{align*}
The bulk velocity becomes $q(z,t) =\qu(t)\chi_{\{z>\bar{z}(t)\}}(z)=\qu(t)\tilde{H}(\xi).$
Equations~\eqref{finalmod} can be written in the new variable $\xi$ as the system
\begin{align} \label{finalmodxi} \begin{split} 
\partial_t{\bC}  + \partial_\xi\big(
\tilde{\mathcal{V}}_{\boldsymbol{C}}(X,\partial_\xi X, \xi,t)\bC\big)& = \beta(t)\bar{z}'(t)\bC + \delta (\xi)\beta(t)\qf(t)\bCf + \gamma(\xi)\bR_{\bC}(\bC,\bS) \\
\partial_t{\bS}  + \partial_\xi\big(\tilde{\mathcal{V}}_{\boldsymbol{S}}(X,\partial_\xi X, \xi,t)\bS\big)  & =  \beta(t)\bar{z}'(t)\bS + \delta (\xi)\beta(t)\qf(t)\bSf  + \gamma(\xi)\bR_{\bS}(\bC,\bS), 
\end{split} 
\end{align} 
where we directly have removed the tilde above~$\bC$, $\bS$, $X$, $\gamma$, $H$, and $\delta$, and 
\begin{align*} 
 \tilde{\mathcal{V}}_{\boldsymbol{C}} (X,\partial_\xi X, \xi,t) & \coloneqq  \tilde{q}(\xi,t) + \gamma(\xi)\beta(t)\left( \vhs(X)- \beta(t)\partial_\xi D(X)\right),  \\
 \tilde{\mathcal{V}}_{\boldsymbol{S}} (X,\partial_\xi X, \xi,t) & \coloneqq  \tilde{q}(\xi,t) -
\gamma(\xi)\beta(t)\frac{f(X) - \beta(t)\partial_\xi \mathcal{D}(X) }{\rho_X - X},
\end{align*}
where $\tilde{q}(\xi,t)\coloneqq \alpha(\xi,t) + \beta(t)\qu(t)\tilde{H}(\xi)$ (temporary definition), 
\begin{align} \label{fXdef} 
f(X)\coloneqq\vhs(X)X, 
\end{align} 
and
\begin{equation*}
\mathcal{D}(X)\coloneqq \int_{X_\mathrm{c}}^{X}a(s)\,\rmd s,\quad\text{where}\quad a(s)\coloneqq sd(s).
\end{equation*}
Clearly, the governing equations~\eqref{finalmodxi} can be expressed in  the form \eqref{syst:intro} if we define 
\begin{align*} 
U_{\boldsymbol{C}} (X, \xi, t) \coloneqq  \tilde{q}(\xi,t) + \gamma(\xi)\beta(t) \vhs(X), 
 \quad U_{\boldsymbol{S}} (X, \xi, t) \coloneqq  \tilde{q}(\xi,t) - \gamma(\xi)\beta(t)\frac{f(X)}{\rho_X - X}.
 \end{align*}

Equations~\eqref{finalmodxi} hold for $\xi\in\mathbb{R}$ when~$t\in T_\mathrm{f}$ if it is assumed that all concentrations are zero above the surface $\xi<0$.
In particular, for the underflow zone $z>B \Leftrightarrow \xi >1$, the equations are
\begin{equation*}
\partial_t\bC+\big(\alpha(\xi,t) + \beta(t)\qu(t)\big)\partial_\xi\bC=\beta(t)\bar{z}'(t)\bC, 
 \quad \partial_t\boldsymbol{S}+\big(\alpha(\xi,t) + \beta(t)\qu(t)\big)\partial_\xi \boldsymbol{S} =\beta(t)\bar{z}'(t) \boldsymbol{S}.
\end{equation*}

To ensure that  fluxes have  correct units across the surface during  extraction ($t\in T_\mathrm{e}$), 
 we also need to transform the extraction pipe.
The pipe is originally modelled by the upwards-pointing $x$-axis with bulk flow upwards~$\qe(t)$ and coupled to the $z$-axis by $-x=z-\bar{z}(t)$.
Consequently, the transformation for the extraction pipe is
\begin{equation}\label{eq:xidefx}
\xi(x,t)\coloneqq\frac{-x}{B-\bar{z}(t)},
\end{equation}
and $\bC_\mathrm{pipe}(x,t)=\boldsymbol{\tilde{C}}(\xi(x,t),t)$.
Then we get
\begin{align*}
 \partial_t \bC_\mathrm{pipe}  &= - \partial_{\xi} \boldsymbol{\tilde{C}} \xi\bar{z}'(t)\beta(t) + \partial_t \boldsymbol{\tilde{C}} 
= -\bigl( \partial_{\xi} (\xi\boldsymbol{\tilde{C}}) -\boldsymbol{\tilde{C}}\bigr)\bar{z}'(t)\beta(t) + \partial_t \boldsymbol{\tilde{C}},\quad 
 \partial_x \bC_\mathrm{pipe} = - \partial_{\xi} \boldsymbol{\tilde{C}} \beta(t).
\end{align*}
Equations~\eqref{eq:pipe} are  transformed to (we immediately remove the tildes)
\begin{align*}  
\partial_t\bC -\partial_\xi\big(\beta(t)(\xi\bar{z}'(t) + \qe(t))\bC\big)&=-\bar{z}'(t)\beta(t)\bC,  \\ 
\partial_t\boldsymbol{S} -\partial_\xi\big(\beta(t)(\xi\bar{z}'(t) + \qe(t)) \boldsymbol{S} \big)&=-\bar{z}'(t)\beta(t) \boldsymbol{S}, \quad \xi <0. 
\end{align*}  
(In comparison to~\eqref{finalmodxi}, there is a minus sign on the right-hand side here.) 
With the bulk velocity  redefined as
\begin{equation*}
\tilde{q}(\xi,t)\coloneqq 
\begin{cases}
0 & \text{if $\xi<0$ and $\qe(t)=0$},\\
-\beta(t)(\xi\bar{z}'(t) + \qe(t)) = -\beta(t)(\xi(\qu(t)+\qe(t)) + \qe(t)) & 
 \text{if $\xi<0$ and  $\qe(t)>0$,} \\
\alpha(\xi,t) + \beta(t)\qu(t) = \beta(t)(\xi\bar{z}'(t) +\qu(t)- \bar{z}'(t)) & 
 \text{if $\xi>0$},
\end{cases}
\end{equation*}
we thus get the  governing equations 
\begin{align}\label{finalmodxi_e}
\begin{split}
\partial_t{\bC}  + \partial_\xi\big(
\tilde{\mathcal{V}}_{\boldsymbol{C}}(X,\partial_\xi X, \xi,t)\bC\big)& = \sgn(\xi)\bar{z}'(t)\beta(t)\bC + \delta (\xi)\beta(t)\qf(t)\bCf + \gamma(\xi)\bR_{\bC}(\bC,\bS),  \\
\partial_t{\bS}  + \partial_\xi\big(\tilde{\mathcal{V}}_{\boldsymbol{S}}(X,\partial_\xi X, \xi,t)\bS\big)  & = \sgn(\xi)\bar{z}'(t)\beta(t)\bS + \delta (\xi)\beta(t)\qf(t)\bSf + \gamma(\xi)\bR_{\bS}(\bC,\bS),
\end{split} 
\end{align} 
for $\xi\in\mathbb{R}$ and $t\in T_\mathrm{e}$. 
 These equations hold for all $t>0$ if we for $t\in T_\mathrm{f}$ define all concentrations in $\xi<0$ to be zero; then the system is reduced to~\eqref{finalmodxi}. 
The salient point of these transformations  is that the outlet concentrations are now simply defined by
\begin{equation}\label{eq:outputs}
\begin{alignedat}2
\bC_\mathrm{e}(t) &= \bC(0^-,t),&\qquad& \bS_\mathrm{e}(t) = \bS(0^-,t) \qquad \text{if $t\in T_\mathrm{e}$,} \\
\bC_\mathrm{u}(t) &= \bC(1^+,t),&\qquad& \bS_\mathrm{u}(t) = \bS(1^+,t)\qquad \text{if $t\in T_\mathrm{f}$ and $\Qu(t)>0$.} 
\end{alignedat}
\end{equation}
For other times   the outlet concentrations are defined to be zero.

During periods of mixing and $t\in T_\mathrm{f}$, the following ODEs for time-dependent concentrations~$\bC(t)$ and $\bS(t)$ are obtained by averaging Equations~\eqref{finalmodxi}, i.e., integrating over the interval $[0^-,1)$ when the convective and diffusive terms are zero:
\begin{equation}\label{eq:mixing}
\begin{aligned}
 \boldsymbol{C}'(t) & = \beta(t)\bar{z}'(t)\bC + 
\beta(t)\qf(t)\bC_\mathrm{f}(t) + \bR_{\bC}(\bC,\bS), \\
\boldsymbol{S}'(t) & = \beta(t)\bar{z}'(t)\bS + 
\beta(t)\qf(t)\bS_\mathrm{f}(t) + \bR_{\bS}(\bC,\bS).
\end{aligned}
\end{equation}
The same ODEs are indeed obtained for $t\in T_\mathrm{e}$ (then $\bC_\mathrm{f}(t)=\bS_\mathrm{f}(t)=0$).

\subsection{Model equations with percentages} \label{model:percentages} 

The restrictive part of the explicit CFL condition of an explicit scheme is due to the second-order derivative term containing the function $D(X)$, which depends only on the scalar variable~$X$. 
The governing model will be rewritten so that the convective and diffusive terms are clearly seen.
Furthermore, to establish boundedness on the total particulate concentration, $0\leq X\leq \hat{X}$, we will rewrite the model in terms of percentages~$\bp$; see~\eqref{eq:Xdef}.
We define the flux and reaction term of the total solids concentration $X$ by
\begin{align*}
F(\xi,t,X) &\coloneqq\tilde{q} (\xi, t) X +\gamma (\xi) \beta (t) f(X),\qquad
{R}(\bC,\bS)\coloneqq c\sum_{k=1}^{k_{\bC}}R_{\bC}^{(k)}(\bC,\bS).
\end{align*}
%and source terms 
%\begin{align*}
%\Psi(\xi,t,X,\bp,\bS)&\coloneqq\beta(t)\delta (\xi)\qf(t)\Xf + \gamma(\xi){R}(\bp X/c,\bS),\\
%\Psi_{\bp}(\xi,t,X,\bp,\bS)&\coloneqq\beta(t)\delta (\xi)\qf(t)\bp_\mathrm{f} X_\mathrm{f} + \gamma(\xi) c\bR_{\bC}(\bp X/c,\bS),\\
%\Psi_{\bS}(\xi,t,X,\bp,\bS)&\coloneqq\beta(t)\delta (\xi)\qf(t)\bSf + \gamma(\xi)\bR_{\bS}(\bp X/c,\bS),
%\end{align*}
%where $\bp_\mathrm{f} X_\mathrm{f}= c\bCf$.
Then we can write
\begin{align*}
\tilde{\mathcal{V}}_{\boldsymbol{C}}X &= F(\xi,t,X) - \gamma(\xi)\beta(t)^2\partial_{\xi}\mathcal{D}(X),\quad 
\tilde{\mathcal{V}}_{\boldsymbol{S}} = \frac{\tilde{q}(\xi,t)\rho_X - F(\xi,t,X) + \gamma(\xi)\beta(t)^2\partial_{\xi}\mathcal{D}(X)}{\rho_X-X}.
\end{align*}
By first multiplying the first equation of~\eqref{finalmodxi_e} by $c$ and adding the vector components corresponding to~\eqref{eq:Xdef}, one obtains the scalar equation 
\begin{align}\label{eq:PDE_X} \begin{split} 
\partial_t{X}  + \partial_\xi F(\xi,t,X)&  = \partial_{\xi}\big(\gamma(\xi)\beta(t)^2\partial_{\xi}\mathcal{D}(X)\big)\\
& \quad  + \beta(t)\sgn(\xi)\bar{z}'(t)X + \beta(t)\delta (\xi)\qf(t)\Xf + \gamma(\xi){R}(\bp X/c,\bS)
\end{split} \end{align} 
for~$X$.  The concentrations $\bC=\bp X/c$ and $\bS$ are given by the system~\eqref{finalmodxi_e}, which we now can rewrite with the unknowns~$X$, $\bp$ and~$\bS$:
\begin{align}
\partial_t{(\bp X)}  + \partial_\xi(F(\xi,t,X)\bp) &= \partial_\xi\big(\bp\gamma(\xi)\beta(t)^2\partial_{\xi}\mathcal{D}(X)\big)+ \beta(t)\sgn(\xi)\bar{z}'(t)\bp X\notag\\
& \qquad + \beta(t)\delta (\xi)\qf(t)\bp_\mathrm{f} X_\mathrm{f} + \gamma(\xi) c\bR_{\bC}(\bp X/c,\bS),\label{eq:PDE_p}\\
\partial_t{\bS}  + \partial_\xi\big(\tilde{\mathcal{V}}_{\boldsymbol{S}}\bS\big)  & = \beta(t)\sgn(\xi)\bar{z}'(t)\bS + \beta(t)\delta (\xi)\qf(t)\bSf + \gamma(\xi)\bR_{\bS}(\bp X/c,\bS).  \label{eq:PDE_S}
\end{align}
Not all equations in~\eqref{eq:PDE_p} need to be solved; one may solve  only the first $k_\mathrm{\bC}-1$ ones and set $p^{(k_{\bC})}=1-(p^{(1)}+\cdots+p^{(k_{\bC}-1)})$.
The output concentrations for~$\bp$ and $X$ are defined as in~\eqref{eq:outputs}.

The mixing ODEs~\eqref{eq:mixing} are converted analogously:
\begin{equation}\label{eq:mixingpXS}
\begin{aligned}
X'(t) & = \beta(t)\bar{z}'(t)X + \beta(t)\qf(t)X_\mathrm{f}(t) + {R}(\bp X/c,\bS), \\
(\boldsymbol{p}  X)'(t) & = \beta(t)\bar{z}'(t)\bp X + \beta(t)\qf(t)\bp_\mathrm{f}(t)X_\mathrm{f}(t) + c\bR_{\bC}(\bp X/c,\bS), \\
\boldsymbol{S}'(t) & = \beta(t)\bar{z}'(t)\bS + \beta(t)\qf(t)\bS_\mathrm{f}(t) + \bR_{\bS}(\bp X/c,\bS).
\end{aligned}
\end{equation}

\section{Explicit numerical scheme} \label{sec:numscheme}

\subsection{Discretization in space and time}\label{subsec:discr}  

To discretize the PDE system~\eqref{eq:PDE_X}--\eqref{eq:PDE_S}, we define $\Delta\xi\coloneqq 1/(N+1/2)$ for an integer~$N$,  $\xi_j\coloneqq j\Delta\xi$, and let cell~$j$ be the interval $I_j\coloneqq [\xi_{j-1/2},\xi_{j+1/2}]$; see Figure~\ref{fig:scheme_celd}.
\begin{figure}[tbp]
	\centering
	\includegraphics[scale=0.4]{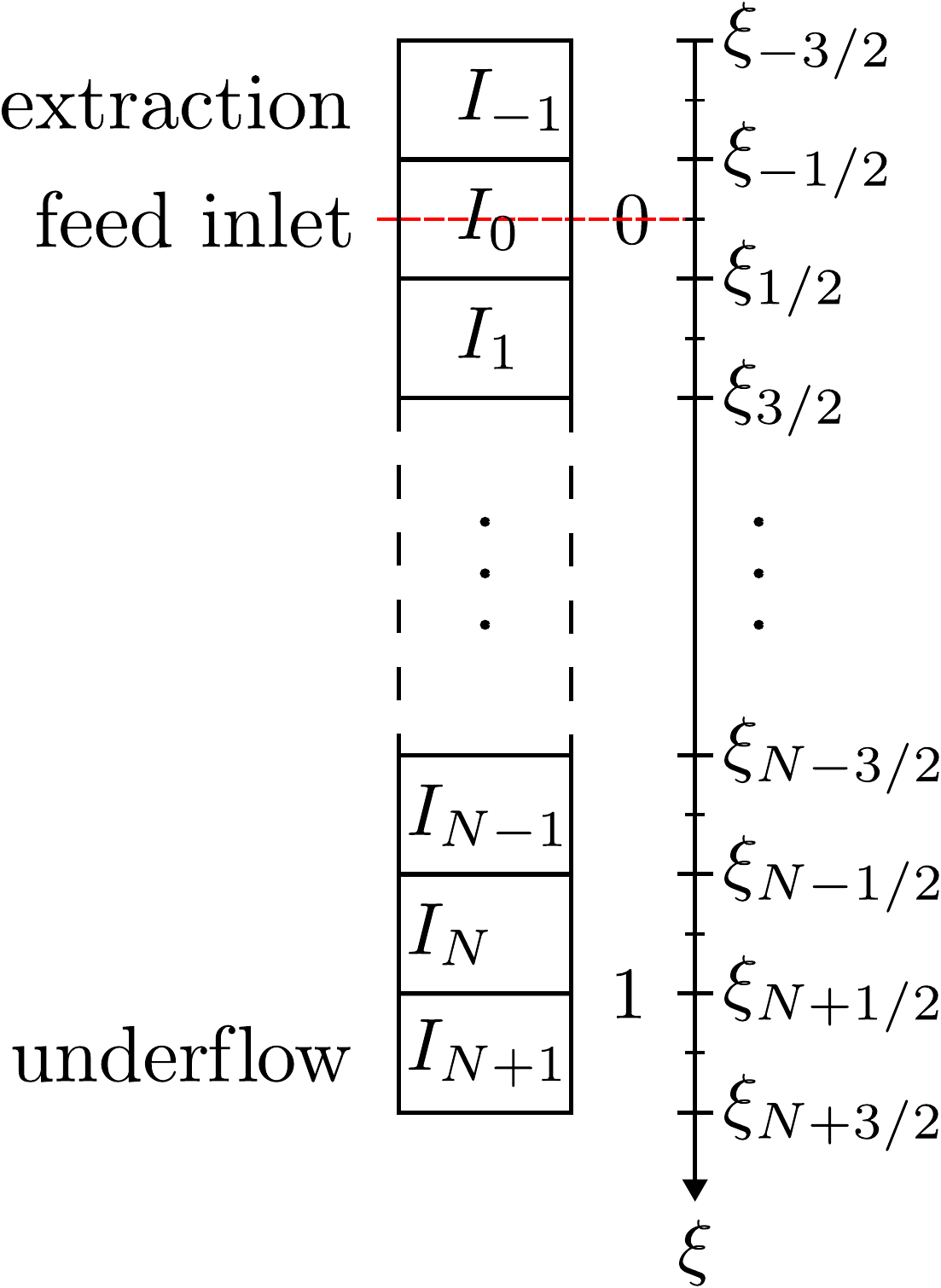}
	\caption{Schematic of the division of the computational domain into cells.} \label{fig:scheme_celd}
\end{figure}%
Thus, the feed inlet is located in the middle of~$I_0$, which makes the numerical fluxes at the cell boundaries easy to define.
The bottom of the tank is  located at $\xi = \xi_{N+1/2}=(N+1/2)\Delta\xi=1$.
Given a total simulation time~$T$ and the total number of discrete time points~$N_T$, we define the time step $\tau\coloneqq T/N_T$, which is supposed 
 to satisfy a suitable CFL condition, and the discrete time points~$t^n$, $n = 0,1,\dots, N_T$. 
  The total particulate concentration   is denoted by $\smash{X_j^n\approx X(\xi_{j},t^n)}$, and analogous notation is used for other 
   concentrations.
The underflow concentration is captured by  an additional cell~$j=N+1$ below the tank.
The extraction concentrations can be obtained in an analogous way in an additional cell~$I_{-1}$.
 For both the explicit and implicit  schemes the outlet concentrations are defined by
$\smash{\bp_\mathrm{e}=\bp_{-1}^n}$ and $\smash{\bp_\mathrm{u}=\bp_{N+1}^n}$,
and similarly for~$X$ and~$\bS$. 

\subsection{Explicit scheme}  \label{subsec:explicheme}
For ease of notation, we introduce $a^+=\max \{ 0,a \}$, $a^-=\min \{0,a\}$, the upwind and the divergence operators by
\begin{align*}
 \Upw(a;b,c) 
 &\coloneqq  \max\{a,0\}b + \min\{a,0\}c = a^+b + a^- c,\qquad\mbox{for } a,b,c\in \mathbb{R},\\
[\Delta\boldsymbol{\Phi}]_j^n
&\coloneqq \boldsymbol{\Phi}_{j+1/2}^n-\boldsymbol{\Phi}_{j-1/2}^n.
\end{align*}
We define the Kronecker delta and the characteristic function for the mixture in the tank by
\begin{equation*}
\delta_{j,0}\coloneqq\int_{I_{j}}\delta(\xi)\,\rmd\xi
=\begin{cases}
1&\text{if $j=0$},\\
0 &\text{if $j\neq 0$},
\end{cases}\qquad
\gamma_j\coloneqq \begin{cases}
\gamma(\xi_j)& \text{if $j\neq 0$,} \\
\frac{1}{2} & \text{if $j=0$.} 
\end{cases}
\end{equation*}
 The term $\sgn(\xi)\beta(t)\bar{z}'(t)X(\xi,t)$ in~\eqref{eq:PDE_X}
is approximated  by the average  
\begin{equation*}
\frac{1}{\Delta\xi}\int_{\xi_{-1/2}}^{\xi_{1/2}}\sgn(\xi)\beta^n(\bar{z}')^n X(\xi,)\,\rmd\xi =
\begin{cases}
0 &\text{if $\qe^n>0$},\\
\frac{1}{2}\beta^n(\bar{z}')^n X_0^n = \frac{1}{2}\beta^n(\qu^n-\qf^n) X_0^n  &\text{if $\qe^n=0$},
\end{cases}
\end{equation*}
for $j=0$, where we  recall that $X(\xi,t)=0$ for $\xi<0$ if $\qe(t)=0$, 
and analogously for $\bS$ instead of~$X$.
For the numerical update formulas, we define
\begin{align} \label{kappajndef} 
\kappa^n_j\coloneqq\begin{cases}
1-\tau\beta^n(\bar{z}')^n = 1-\tau\beta^n(\qu^n+\qe^n) & \text{if $j<0$ and $\qe^n>0$},\\
1 & \text{if $j<0$ and $\qe^n=0$ or $j=0$ and $\qe^n>0$},\\
1-\frac{\tau}{2}\beta^n(\qu^n-\qf^n) & \text{if $j=0$ and  $\qe^n=0$,} \\
1+\tau\beta^n(\bar{z}')^n & \text{if $j>0$,} 
\end{cases}
\end{align} 
Finally, we define
\begin{equation*}
\tilde{q}_{j+1/2}^n\coloneqq\tilde{q}(\xi_{j+1/2},t^n) = 
\begin{cases}
0  & \text{if $j=-2, -1$ and $\qe^n=0$,} \\
-\beta^n (\xi_{j+1/2}(\qu^n+\qe^n)+\qe^n ) & \text{if $j=-2, -1$ and $\qe^n>0$,} \\
\alpha_{j+1/2}^n + \beta^n\qu^n &\text{if $j = 0, \ldots, N+1$} .
\end{cases}
\end{equation*}

An explicit approximation of the system~\eqref{eq:PDE_X}--\eqref{eq:PDE_S} is now obtained by following ideas from \cite{SDm2an_reactive,SDIMA_MOL}.
We assume  that the function~$f$ defined by \eqref{fXdef} has a unique maximum at $X^* \in (0, \hat{X})$.
 The  Engquist-Osher numerical flux \cite{eopaper} 
\begin{align}  \label{EOjphdef} 
\mathcal{E}_{j+1/2}^{n} &\coloneqq\gamma_{j+1/2} \biggl(  f(0) + \int_0^{X_j^n}  \max \bigl\{ 0,f'(s) \bigr\} \, \mathrm{d} s 
+ \int_0^{X_{j+1}^n}  \min \bigl\{0,f'(s)  \bigr\} \, \mathrm{d} s \biggr) 
\end{align} 
is used, which for a unimodal flux function~$f$ is
\begin{align*}  
 \mathcal{E}_{j+1/2}^{n} \bigl(X_{j}^n,X_{j+1}^n\bigr)    = \gamma_{j+1/2} 
\begin{cases} 
f (X_{j}^n) & \text{if $X_{j}^n, X_{j+1}^n \leq X^*$,} \\
 f (X^*)  &  \text{if  $X_{j+1}^n \leq X^*<X_{j}^n$,} \\
-f (X^*)  + f(X_{j}^n) +  f(X_{j+1}^n) &  \text{if $X_{j}^n \leq X^*<X_{j+1}^n$,} \\
 f (X_{j+1}^n) &  \text{if $X^*<X_{j}^n, X_{j+1}^n$.}
\end{cases}  
\end{align*} 
The diffusive term and   the  other fluxes  are discretized by
\begin{align}
\mathcal{J}_{j+1/2}^{n} &\coloneqq\gamma_{j+1/2}\dfrac{(\beta^{n})^2}{\Delta \xi}\big(\mathcal{D}(X^{n}_{j+1})-\mathcal{D}(X^{n}_j)\big), 
 \quad j = -2,\dots,N+1,  \label{Jjphdef}  \\
\mathcal{B}_{j+1/2}^{n} &\coloneqq{\rm Upw}(\tilde{q}_{j+1/2}^{n}; X^{n}_j,X^{n}_{j+1}),  \label{Bjphdef}  \\
\mathcal{F}_{j+1/2}^{n} &\coloneqq\mathcal{B}_{j+1/2}^{n} + \beta^{n} \mathcal{E}_{j+1/2}^{n} \nonumber \\
&=  \label{Fjphdef} 
\begin{cases}
0  & \text{if $j=-2, -1$ and $\qe^n=0$,} \\
-\beta^n\big(\xi_{j+1/2}(\qu^n+\qe^n)+\qe^n\big) X^{n}_{j+1} & \text{if $j=-2, -1$ and $\qe^n>0$,} \\
{\rm Upw}(\alpha_{j+1/2}^n + \beta^n\qu^n; X^{n}_j,X^{n}_{j+1}) + \beta^{n} \mathcal{E}_{j+1/2}^{n}  &\text{if $j = 0, \dots, N-1$,} \\
(\alpha_{j+1/2}^n + \beta^n\qu^n)X^{n}_j  &\text{if  $j = N, N+1$,} 
\end{cases}\\
\Phi_{j+1/2}^n& \coloneqq \mathcal{F}_{j+1/2}^{n} - \mathcal{J}_{j+1/2}^{n}.  \label{Phijphdef} 
\end{align}
We have $\alpha_{N+1/2}=0$, and when $\qe>0$, we assume that $\Delta\xi$ is sufficiently small, so that all fluxes at the top $\xi=\xi_{-1/2}$ and bottom $\xi_{N+1/2}$ are directed out of the tank, which means that no boundary values are needed.
The numerical fluxes are then defined, for $j=-2,\dots,N+1$, by
\begin{align}
 \boldsymbol{\Phi}_{\boldsymbol{p},j+1/2}^n & \coloneqq  \Upw(\Phi_{j+1/2}^n;\bp_j^n,\bp_{j+1}^n), \label{eq:Phipflux}\\
 \boldsymbol{\Phi}_{\boldsymbol{S},j+1/2}^n & \coloneqq  \Upw\biggl(\rho_X\tilde{q}_{j+1/2}^n -  \Phi_{j+1/2}^n;\frac{\bS_j^n}{\rho_X-X_j^n},\frac{\bS_{j+1}^{n}}{\rho_X-X_{j+1}^{n}}\biggr). \nonumber  
\end{align}
With an Euler time step, $\lambda \coloneqq \tau/\Delta \xi$, and  $\mu \coloneqq \tau/\Delta \xi^2$, 
 we can formulate the explicit scheme  as follows. 
For the boundary layers $j=-1$ and $j=N+1$, we set 
\begin{alignat}{2}
&X_{-1}^{n+1}=0,\quad \bp_{-1}^{n+1}=\bzero,\quad \bS_{-1}^{n+1}=\bzero \quad&\text{if $\qe^n=0$,}  \label{eq:j=-1_Tf}\\
& X_{N+1}^{n+1}=0,\quad \bp_{N+1}^{n+1}=\bzero,\quad \bS_{N+1}^{n+1}=\bzero \quad&\text{if $\qu^n=0$.}  \label{eq:j=N+1_qu=0}
\end{alignat}
Otherwise, we have for $j=-1,\dots,N+1$,
\begin{align}
X^{n+1}_j &= \kappa_j^n X^{n}_j - \lambda [\Delta \mathcal{F}]_j^n + \lambda [\Delta \mathcal{J}]_j^n + \lambda\delta_{j,0}\beta^nq_{\rm f}^nX_{\rm f}^n + \tau\gamma_j{R}(\bp_j^nX^{n}_j/c,\bS_j^n),\label{eq:scheme_X}\\
 \bp^{n+1}_j X^{n+1}_j &= \kappa_j^n\bp^{n}_jX^{n}_j - \lambda [\Delta\boldsymbol{\Phi}_{\bp}]_j^n + \lambda\delta_{j,0}\beta^nq_{\rm f}^n\bp_{\rm f}^nX_{\rm f}^n + \tau\gamma_jc\bR_{\bC}(\bp_j^nX^{n}_j/c,\bS_j^n),\label{eq:scheme_p}\\
 \bS^{n+1}_j &= \kappa_j^n\bS^{n}_j - \lambda [\Delta\boldsymbol{\Phi}_{\boldsymbol{S}}]_j^n + \lambda\delta_{j,0}\beta^nq_{\rm f}^n\bS_{\rm f}^n + \tau\gamma_j\bR_{\bS}(\bp_j^nX^{n}_j/c,\bS_j^n).\label{eq:scheme_S}
\end{align}
Scheme~\eqref{eq:scheme_X} is solved first, then the others.
If $\smash{X^{n+1}_j=}0$, then the value of $\smash{\bp_j^{n+1}}$ is irrelevant, and can be set to~$\smash{\bp_j^{n+1}\coloneqq\bp_j^{n}}$.
For the cells outside the tank, the scheme for $X$ is reduced to the following.
If $\qe^n=0$, then \eqref{eq:j=-1_Tf} handles $j=-1$; otherwise, we define 
$q_{\mathrm{out}}^n := \qu^n+\qe^n$ and utilize 
\begin{align} \label{eq:update_-1} 
X^{n+1}_{-1} & = \bigl(1-\tau\beta^n q_{\mathrm{out}}^n  \bigr) X^{n}_{-1}  
 + \lambda\beta^n\bigl( (\xi_{-1/2} q_{\mathrm{out}}^n +\qe^n) X^n_0 - (\xi_{-3/2} q_{\mathrm{out}}^n +\qe^n)X^n_{-1}\bigr),
\end{align}
On the other hand, if $\qu^n = 0$,  then \eqref{eq:j=N+1_qu=0} is in effect for  $j=N+1$; otherwise,  \begin{align}
X^{n+1}_{N+1} & = (1+\tau\beta^n q_{\mathrm{out}}^n) X^{n}_{N+1} - \lambda\bigl( (\alpha^n_{N+3/2}+\beta^n\qu^n)X^n_{N+1} - \beta^n\qu^nX^n_{N}\bigr).\label{eq:update_N+1}
\end{align}
Similar update formulas hold for $\bp$ and $\bS$.
The resulting approximate concentrations are transformed back to the original $z$- and $x$-coordinates via~\eqref{eq:xidef} and \eqref{eq:xidefx}, respectively.

\subsection{Numerics during mixing} \label{subsec:num-mix} 
Suppose a (PDE or numerical) solution $X(\xi,T_0)$ (or $\smash{p^{(k)}(\xi,T_0)}$ or $\smash{S^{(k)}(\xi,T_0))}$ is known at $t=T_0=t_{n_0}$ when a period of complete mixing starts.
The initial concentrations for the ODEs~\eqref{eq:mixingpXS} are defined as the averages (analogously for $p^{(k)}$ and $S^{(k)}$)
\begin{equation*}
X(T_0)\coloneqq 
\int_{0}^{1}X(\xi,T_0)\,\rmd\xi
\approx \Delta\xi \biggl(\frac{X_0^{n_0}}{2}+ {X}_1^{n_0} + \dots + {X}_N^{n_0}   \biggr).
\end{equation*}
The ODE system~\eqref{eq:mixingpXS} can then approximately be  integrated in time with an Euler step.
If an ODE mixing period ends at $t=t^{{n}}$ with the values ${X}(t^n)$, and the PDE model is to be simulated thereafter, then the value of each component~$k$ is allocated to all cells in the tank; ${X}_j^{{n}}\coloneqq {X}(t^n)$, $j=0,\ldots,N$.

\subsection{CFL condition} \label{subsec:CFL} 
We denote the solution variables by $\mathbfcal{U}\coloneqq(X,\bp^\rmT,\bS^\rmT)^\rmT$ and define the constants  
\begin{alignat*}2
&\|f\|\coloneqq\max\limits_{0\le X\le\hat{X}}|f(X)|, &\quad& M_R\coloneqq\frac{1}{c}\sup_{\boldsymbol{\mathcal{U}}\in\Omega\atop 1\le k\le k_{\bC}}
 \left|\frac{\partial R }{\partial C^{(k)}} \right|,\\
&M_{q1}\coloneqq\max_{0\le t\le T}\{\qu(t)+\qe(t),\qf(t)\},&\quad&M_{q2}\coloneqq \max_{0\le t\le T}\big(\max\{\qf(t),\qe(t)\}+2\qu(t)\big)\\
&\zeta\coloneqq\frac{1}{B-B_\mathrm{c}},&\quad&C_1\coloneqq \zeta(M_{q2}+\|f'\|),\\
&C_2\coloneqq\zeta^2 \|a\|, &\quad& M_{\boldsymbol{\xi}}:=\sup_{\boldsymbol{\mathcal{U}}\in\Omega, \atop 1\le k\le k_{\boldsymbol{\xi}}}
\sum_{l\in I_{\boldsymbol{\xi},k}^-}|\sigma_{\boldsymbol{\xi}}^{(k,l)}|\bar{r}_{\boldsymbol{\xi}}^{(l)} (\bC,\bS),
\quad\boldsymbol{\xi}\in\{\bC,\bS\}. 
\end{alignat*}
The time step~$\tau$ and the spatial mesh width~$\Delta \xi$ should satisfy the CFL condition
\begin{align} \label{eq:CFL} \tag{CFL} 
\begin{split} 
\tau\biggl( & \zeta M_{q1} + \max\{M_R, M_{\bC}, M_{\bS}\} %\\  &  
+ \frac{2}{\Delta \xi} \max\biggl\{ C_1  + \frac{C_2}{\Delta\xi}, \frac{1}{\rho_X-\hat{X}}\biggl( \zeta \rho_X+C_1\hat{X} + \frac{C_2\hat{X}}{\Delta\xi}\biggr) \biggr\} \biggr) \leq 1.
  \end{split} 
\end{align}

\subsection{Monotonicity and invariant region property} \label{subsec:mon} 

In what follows, we demonstrate that the solution variables $\mathbfcal{U}=(X,\bp^\rmT,\bS^\rmT)^\rmT$ produced by the explicit numerical scheme 
  stays in the set
\begin{equation*}
\Omega:=\big\{\boldsymbol{\mathcal{U}}\in\mathbb{R}^{1+k_{\bC}+k_{\bS}}:
0 \leq X\leq\hat{X},\ \bp\geq 0,\ p^{(1)}+\cdots+p^{(k_{\bC})}=1,\ \bS\geq 0\big\}
\end{equation*}
provided that this property holds for the initial values. Our proofs rely
 on the following lemma, which follows directly from the definition~\eqref{EOjphdef}.

\begin{lemma}\label{lemma:Engquist}
Assume that $0\leq X_j\leq\hat{X}$  for all~$j$. Then the Engquist-Osher flux
$\mathcal{E}_{j+1/2}={\mathcal{E}}_{j+1/2}(X_j,X_{j+1})$ applied on the unimodal function $0\leq f\in C^1$ satisfies
\begin{align*}
-\|f'\|\leq\frac{\partial\mathcal{E}_{j+1/2}}{\partial X_{j+1}}
\leq 0\leq\frac{\partial\mathcal{E}_{j+1/2}}{\partial X_{j}}\leq\|f'\|,\quad \left|\frac{\partial[\Delta\mathcal{E}]_j}{\partial X_j}\right|\le\|f'\|,\quad
\frac{\mathcal{E}_{j+1/2}}{X_j}\leq\|f'\|,\quad
\frac{\mathcal{E}_{j+1/2}}{X_{j+1}}\leq\|f'\|.
\end{align*}
\end{lemma}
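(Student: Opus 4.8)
The plan is to read every inequality off the defining formula~\eqref{EOjphdef} by differentiating under the integral sign and by estimating the two integrals separately, using only three facts: $f(0)=0$ (since $f(X)=\vhs(X)X$), the bound $|f'(s)|\le\|f'\|$ on $[0,\hat{X}]$, and $0\le\gamma_{j+1/2}\le1$. Throughout I abbreviate the bracket in~\eqref{EOjphdef} by $\hat{\mathcal{E}}=\int_0^{X_j}\max\{0,f'\}\,\rmd s+\int_0^{X_{j+1}}\min\{0,f'\}\,\rmd s$, so that $\mathcal{E}_{j+1/2}=\gamma_{j+1/2}\hat{\mathcal{E}}$, and I invoke unimodality only to pass, when needed, to the explicit four-case expression for $\mathcal{E}_{j+1/2}$ recorded just after~\eqref{EOjphdef}.

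First I would treat the two derivative bounds. By the fundamental theorem of calculus, $\partial\mathcal{E}_{j+1/2}/\partial X_j=\gamma_{j+1/2}\max\{0,f'(X_j)\}$ and $\partial\mathcal{E}_{j+1/2}/\partial X_{j+1}=\gamma_{j+1/2}\min\{0,f'(X_{j+1})\}$. Since $\gamma_{j+1/2}\in[0,1]$ and $0\le\max\{0,f'\}\le\|f'\|$, $-\|f'\|\le\min\{0,f'\}\le0$, the chain $-\|f'\|\le\partial_{X_{j+1}}\mathcal{E}\le0\le\partial_{X_j}\mathcal{E}\le\|f'\|$ is immediate. For the difference estimate, note that only $\mathcal{E}_{j+1/2}$ (through its left slot) and $\mathcal{E}_{j-1/2}$ (through its right slot) depend on $X_j$, so $\partial[\Delta\mathcal{E}]_j/\partial X_j=\gamma_{j+1/2}\max\{0,f'(X_j)\}-\gamma_{j-1/2}\min\{0,f'(X_j)\}$. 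The key point is that $\max\{0,f'(X_j)\}$ and $\min\{0,f'(X_j)\}$ have disjoint support, so at most one of them is nonzero; hence this derivative equals $\gamma_{j+1/2}f'(X_j)$ when $f'(X_j)\ge0$ and $-\gamma_{j-1/2}f'(X_j)$ otherwise, and in either case lies in $[0,\|f'\|]$, giving $|\partial[\Delta\mathcal{E}]_j/\partial X_j|\le\|f'\|$.

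The ratio bounds are where $f(0)=0$ and unimodality really enter, and the second of them is the main obstacle. Using $f(0)=0$ to discard the constant, I write $\hat{\mathcal{E}}=A-B$ with $A:=\int_0^{X_j}\max\{0,f'\}\,\rmd s\in[0,\|f'\|X_j]$ and $B:=-\int_0^{X_{j+1}}\min\{0,f'\}\,\rmd s\in[0,\|f'\|X_{j+1}]$; unimodality makes these explicit, $A=f(\min\{X_j,X^*\})$ and $B=f(\min\{X_{j+1},X^*\})-f(X_{j+1})$, which reproduces the four-case formula for $\mathcal{E}_{j+1/2}$. Since $B\ge0$, we obtain $\mathcal{E}_{j+1/2}=\gamma_{j+1/2}(A-B)\le\gamma_{j+1/2}A\le\|f'\|X_j$ (the factor $\gamma_{j+1/2}\le1$ together with $X_j\ge0$ also covers the case $\hat{\mathcal{E}}<0$), i.e.\ the first ratio bound $\mathcal{E}_{j+1/2}/X_j\le\|f'\|$. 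For the second ratio bound I would isolate the dependence on the right cell through the $\min$-integral, $0\le B\le\|f'\|X_{j+1}$, and combine it with the case analysis: whenever $X_{j+1}>X^*$ one has $A\le f(X^*)$ and $B=f(X^*)-f(X_{j+1})$, so $\hat{\mathcal{E}}=A-B\le f(X_{j+1})$ and hence $\mathcal{E}_{j+1/2}\le\gamma_{j+1/2}f(X_{j+1})\le\|f'\|X_{j+1}$, yielding $\mathcal{E}_{j+1/2}/X_{j+1}\le\|f'\|$.

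The main obstacle is precisely this sign bookkeeping around the right slot. For the left slot the crude estimate $\hat{\mathcal{E}}\le A\le\|f'\|X_j$ disposes of the first ratio uniformly, but the $\min$-integral only controls the \emph{negative} part of $\mathcal{E}_{j+1/2}$ directly, so bounding $\mathcal{E}_{j+1/2}$ from above by $\|f'\|X_{j+1}$ cannot be read off by simply dropping the nonnegative $\max$-integral. The argument therefore has to go through the explicit four-case expression for $\mathcal{E}_{j+1/2}$ rather than the one-line integral estimate, and the delicate configurations to handle are those in which $X_{j+1}$ is small relative to $X_j$; I expect the bulk of the work to lie in organising these right-slot cases so that the dependence on $X_{j+1}$ is made explicit and matched against $\|f'\|X_{j+1}$, the remaining inequalities following from the monotone-flux structure already established above.
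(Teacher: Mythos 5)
Your treatment of the first three assertions is correct and is exactly the direct computation the paper has in mind (the paper gives no written proof, stating only that the lemma ``follows directly from the definition~\eqref{EOjphdef}''): differentiation under the integral sign gives $\partial\mathcal{E}_{j+1/2}/\partial X_j=\gamma_{j+1/2}\max\{0,f'(X_j)\}$ and $\partial\mathcal{E}_{j+1/2}/\partial X_{j+1}=\gamma_{j+1/2}\min\{0,f'(X_{j+1})\}$, the disjoint-support observation yields the bound on $\partial[\Delta\mathcal{E}]_j/\partial X_j$, and your decomposition $\hat{\mathcal{E}}=A-B$ with $B\geq 0$ and $A\leq\|f'\|X_j$ settles $\mathcal{E}_{j+1/2}\leq\|f'\|X_j$ in one line. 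All of this is sound.

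The genuine gap is the fourth inequality. Your case analysis only covers $X_{j+1}>X^*$, and you defer the remaining configurations (``$X_{j+1}$ small relative to $X_j$'') to further bookkeeping that you expect to constitute the bulk of the work. No such bookkeeping can succeed, because in those configurations the inequality as printed is \emph{false}: take $\gamma_{j+1/2}=1$, $X_j=X^*$, $X_{j+1}=0$; then $\mathcal{E}_{j+1/2}=f(X^*)>0=\|f'\|X_{j+1}$. The statement contains a sign typo. What is actually needed and used downstream --- in the estimate \eqref{lemm3.2d} of Lemma~\ref{lemma:Rbounds} for the term $-\Phi_{j-1/2}^{n,-}$, and again in Lemma~\ref{lemma:S_pos} --- is the \emph{lower} bound $-\mathcal{E}_{j+1/2}\leq\|f'\|X_{j+1}$, i.e.\ control of the negative part of the flux through its right argument. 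Once the sign is flipped, the proof is the exact dual of your one-line argument for the left slot and requires no case analysis at all: from your own decomposition, $\hat{\mathcal{E}}=A-B\geq -B\geq-\|f'\|X_{j+1}$ since $A\geq 0$, and multiplying by $\gamma_{j+1/2}\in[0,1]$ preserves this (if $\hat{\mathcal{E}}\geq 0$ the bound is trivial since $X_{j+1}\geq 0$). In short: you correctly sensed that the right-slot upper bound is the obstruction, but the right move was to recognize the misprint and prove the lower bound, not to attempt a case-by-case rescue of an unprovable inequality.
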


\begin{lemma}\label{lemma:Rbounds}
If $\boldsymbol{\mathcal{U}}_j^n\in\Omega$ for all~$j$, then the following estimates hold  for $j=-1, \ldots, N+1$:
\begin{align}
&\kappa^n_j\geq 1 - \zeta\tau M_{q1}, \label{lemm3.2a} \\
&|\tilde{q}_{j+1/2}^n| \leq \zeta(\max\{\qf^n,\qe^n\}+2\qu^n) \leq \zeta M_{q2}, \label{lemm3.2b} \\
&\frac{\partial [\Delta\mathcal{B}]_{j}^{n}}{\partial X_k^n}
\begin{cases}
=-\tilde{q}_{j-1/2}^{n,+}\leq 0& \text{\em if $k=j-1$,}\\
\leq\zeta M_{q2},& \text{\em if $k=j$,}\\
=\tilde{q}_{j+1/2}^{n,-}\leq 0, & \text{\em if $k=j+1$,}\\
=0 &\text{\em otherwise},
\end{cases} \label{lemm3.2c} \\
&\left|\frac{\partial}{\partial X_k} R(\bp_j^nX_j^n/c,\bS_j^n)\right|
\begin{cases}
\leq{M_R}& \text{\em if $k=j$,}\\
=0 &\text{\em if $k\neq j$,}
\end{cases} \label{lemm3.2e} \\
&\max\big\{\Phi_{j+1/2}^{n,+},- \Phi_{j-1/2}^{n,-}\big\} \leq \left(\zeta({M_{q2} + \|f'\|}) + \frac{\zeta^2\|a\|}{\Delta\xi}\right)X_j^n  \label{lemm3.2d} 
\end{align}
\end{lemma}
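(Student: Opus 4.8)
The plan is to prove the five estimates \eqref{lemm3.2a}--\eqref{lemm3.2d} separately, in each case inserting the explicit piecewise definitions of $\kappa_j^n$ \eqref{kappajndef}, of $\tilde{q}_{j+1/2}^n$, of $\mathcal{B}_{j+1/2}^n$ \eqref{Bjphdef}, of $\mathcal{J}_{j+1/2}^n$ \eqref{Jjphdef} and of $\Phi_{j+1/2}^n$ \eqref{Phijphdef} from Section~\ref{subsec:explicheme}, and then appealing to three recurring facts: (i) $\beta^n\le\zeta$, coming from the standing lower bound on $B-\bar{z}(t)$; (ii) the hypothesis $\boldsymbol{\mathcal{U}}_j^n\in\Omega$, that is, $0\le X_j^n\le\hat{X}$, $\bp_j^n\ge\bzero$ with $\sum_k p_j^{(k),n}=1$, and $\bS_j^n\ge\bzero$; and (iii) the Engquist--Osher estimates of Lemma~\ref{lemma:Engquist}. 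A useful preliminary observation is that $(\bar{z}')^n$ equals $\qu^n+\qe^n$ during extraction and $\qu^n-\qf^n$ during filling, so that $|(\bar{z}')^n|\le M_{q2}$ in either case.

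Estimates \eqref{lemm3.2a} and \eqref{lemm3.2b} I would obtain by running through the branches of the definitions. For \eqref{lemm3.2a}, each branch of \eqref{kappajndef} has the form $1-\tau\beta^n(\cdots)$ with the velocity factor bounded by $M_{q1}$ after extracting $\beta^n\le\zeta$, whence $\kappa_j^n\ge 1-\zeta\tau M_{q1}$; the $j>0$ branch uses $|(\bar z')^n|=\qf^n-\qu^n\le M_{q1}$ when $(\bar z')^n<0$ during filling, the factor $\tfrac12$ in the $j=0$ branch is harmless, and the remaining branches are direct. For \eqref{lemm3.2b}, the pipe faces ($j=-2,-1$) satisfy $|\xi_{j+1/2}|\le1$ and the tank and underflow faces satisfy $|1-\xi_{j+1/2}|\le1$ once $\Delta\xi$ is small; writing $\alpha_{j+1/2}^n=-(\bar z')^n(1-\xi_{j+1/2})\beta^n$ and separating the extraction and filling cases then yields $|\tilde{q}_{j+1/2}^n|\le\zeta(\max\{\qf^n,\qe^n\}+2\qu^n)\le\zeta M_{q2}$.

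For \eqref{lemm3.2c} I would differentiate the upwind form $[\Delta\mathcal{B}]_j^n=\tilde{q}_{j+1/2}^{n,+}X_j^n+\tilde{q}_{j+1/2}^{n,-}X_{j+1}^n-\tilde{q}_{j-1/2}^{n,+}X_{j-1}^n-\tilde{q}_{j-1/2}^{n,-}X_j^n$. The three off-diagonal derivatives are read off immediately and carry the stated signs, while the diagonal one equals $\tilde{q}_{j+1/2}^{n,+}-\tilde{q}_{j-1/2}^{n,-}$. Here a short case distinction on the signs of the two face velocities is required: if they agree, only one summand survives and \eqref{lemm3.2b} gives the bound $\zeta M_{q2}$; the only delicate case is $\tilde{q}_{j-1/2}^n<0\le\tilde{q}_{j+1/2}^n$, where the entry reduces to $\tilde{q}_{j+1/2}^n-\tilde{q}_{j-1/2}^n$, which I would control using that $\tilde{q}(\cdot,t^n)$ is continuous and piecewise affine in $\xi$ with slope $\pm\beta^n(\bar z')^n$, so that adjacent face values differ by at most $\beta^n|(\bar z')^n|\Delta\xi\le\zeta M_{q2}\Delta\xi\le\zeta M_{q2}$. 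Estimate \eqref{lemm3.2e} is the quickest: the cell-$j$ reaction term depends only on cell-$j$ data, so the derivatives with $k\neq j$ vanish, and the chain rule gives $\partial R/\partial X_j^n=\tfrac{1}{c}\sum_m\bigl(\partial R/\partial C^{(m)}\bigr)p_j^{(m),n}$, which is at most $M_R$ because $p_j^{(m),n}\ge0$ and $\sum_m p_j^{(m),n}=1$.

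The substantial step is \eqref{lemm3.2d}, the cornerstone of the later monotonicity proof. I would split $\Phi_{j+1/2}^n=\mathcal{B}_{j+1/2}^n+\beta^n\mathcal{E}_{j+1/2}^n-\mathcal{J}_{j+1/2}^n$ and dominate each piece by a multiple of $X_j^n$: discarding the nonpositive $X_{j+1}^n$-term gives $\mathcal{B}_{j+1/2}^n\le\tilde{q}_{j+1/2}^{n,+}X_j^n\le\zeta M_{q2}X_j^n$; Lemma~\ref{lemma:Engquist} gives $\beta^n\mathcal{E}_{j+1/2}^n\le\zeta\|f'\|X_j^n$; and, since $\mathcal{D}(X_{j+1}^n)\ge0$ and $0\le\mathcal{D}(X)=\int_{X_{\mathrm c}}^{X}a\le\|a\|X$, one has $-\mathcal{J}_{j+1/2}^n\le\gamma_{j+1/2}(\beta^n)^2\mathcal{D}(X_j^n)/\Delta\xi\le\zeta^2\|a\|X_j^n/\Delta\xi$. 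Summing and taking the positive part (legitimate since $X_j^n\ge0$) gives $\Phi_{j+1/2}^{n,+}\le(\zeta(M_{q2}+\|f'\|)+\zeta^2\|a\|/\Delta\xi)X_j^n$. The bound on $-\Phi_{j-1/2}^{n,-}$ follows symmetrically, now retaining the $X_j^n$-terms of $-\mathcal{B}_{j-1/2}^n$ and of $\mathcal{J}_{j-1/2}^n$ and using the elementary lower estimate $\mathcal{E}_{j-1/2}^n\ge-\|f'\|X_j^n$, immediate from \eqref{EOjphdef} with $f(0)=0$. I expect the main difficulty to be organisational rather than conceptual: keeping, in both the $\Phi_{j+1/2}^{n,+}$ and the $-\Phi_{j-1/2}^{n,-}$ computation, exactly the terms proportional to $X_j^n$ while discarding the rest with the correct sign, together with the delicate diagonal case of \eqref{lemm3.2c}, which must be settled through the piecewise-affine structure of $\tilde{q}$ rather than a direct triangle inequality that would lose a factor of two.
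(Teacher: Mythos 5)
Your proposal is correct and follows essentially the same route as the paper: both prove the five estimates by inserting the piecewise definitions and using $\beta^n\le\zeta$, the sign structure of $\tilde{q}$, and Lemma~\ref{lemma:Engquist}, and your treatment of \eqref{lemm3.2e} (chain rule plus $\sum_k p_j^{(k),n}=1$) and of \eqref{lemm3.2d} (termwise domination of $\mathcal{B}$, $\beta^n\mathcal{E}$ and $-\mathcal{J}$ by multiples of $X_j^n$, using $0\le\mathcal{D}(X)\le\|a\|X$ and the two-sided Engquist--Osher bounds) is exactly the paper's. The one local divergence is the diagonal entry of \eqref{lemm3.2c}: the paper bounds $\tilde{q}_{j+1/2}^{n,+}-\tilde{q}_{j-1/2}^{n,-}\le\zeta\bigl((1+\Delta\xi)\qu^n+\max\{\qe^n,\qf^n\}\bigr)$ directly from the explicit formulas (during extraction $\tilde{q}^{n,+}\lesssim\zeta\qu^n$ and $-\tilde{q}^{n,-}\le\zeta\qe^n$; during filling $\tilde{q}\ge0$ in the tank so only one term survives), whereas you split by sign cases and control the mixed-sign case via the piecewise-affine structure of $\tilde{q}$, which gives the even sharper bound $\beta^n|(\bar{z}')^n|\Delta\xi$ --- a harmless and valid variation, with the single caveat that your continuity claim for $\tilde{q}(\cdot,t^n)$ fails at $\xi=0$ when $\qf^n>0$ (there $\tilde{q}_{-1/2}^n=0$ jumps to $\approx\beta^n\qf^n$); this does not hurt, since in the filling regime $\tilde{q}_{j-1/2}^n<0$ cannot occur, so your delicate case arises only during extraction, where $\tilde{q}$ is indeed continuous across $\xi=0$.
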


\begin{proof}
By the definition of the transformation of variables, 
\begin{align*}
-\qf(t)&\leq \bar{z}'(t)\leq \qu(t)+\qe(t),\qquad
-\zeta{(\qu(t)+\qe(t))} \leq\alpha(\xi,t)\leq{\zeta\qf(t)},\qquad
0<\beta(t)\leq\zeta.
\end{align*}
These inclusions directly imply \eqref{lemm3.2a} since 
\begin{align*}
\kappa^n_j&\geq 1-{\tau\zeta\max\{\qu^n+\qe^n,\qf^n\}}\geq 1-{\tau\zeta M_{q1}}.
%\\
% \tilde{q}_{j+1/2}^n &= 
% \begin{cases}
% 0, & \mbox{if }j=-2, -1,\quad \qe^n=0,\\
% -\beta^n\big(\xi_{j+1/2}(\qu^n+\qe^n)+\qe^n\big)\geq - \zeta{\qe^n},& \mbox{if }j=-2, -1,\quad \qe^n>0,\\
% \alpha_{j+1/2}^n + \beta^n\qu^n,&\mbox{if }j = 0, \ldots, N+1.
% \end{cases}
\end{align*}
To prove \eqref{lemm3.2b} 
 we observe that when $\qe^n = 0$, then $\tilde{q}_{-1/2}^n=\tilde{q}_{-3/2}^n=0$, and when $\qe^n>0$ and $j = -1,-2$,  
\begin{align} \label{pelkoa} 
 \tilde{q}_{j+1/2}^n=-\beta^n\big(\xi_{j+1/2}(\qu^n+\qe^n)+\qe^n\big)\geq - {\zeta\qe^n}. 
\end{align}
For $j=0, \ldots, N$, (we recall that if $\qf^n>0$, then $\qe^n=0$ and vice versa)
\begin{equation}
\tilde{q}_{j+1/2}^n = \alpha_{j+1/2}^n + \beta^n\qu^n = 
\beta^n \left({-(\bar{z}')^n(1-\xi_{j+1/2})+\qu^n}\right)
\begin{cases}
\leq\zeta(\qf^n+\qu^n)& \text{if $\qe^n=0$,} \\
\geq-\zeta(\qu^n+\qe^n) &\text{if $\qe^n>0$,} 
\end{cases} \label{pelkob} 
\end{equation}
and for $j=N+1$, 
\begin{equation}
\tilde{q}_{N+3/2}^n = \frac{(\bar{z}')^n\Delta\xi+\qu^n}{B-\bar{z}^n} \leq \zeta({q_{\mathrm{out}}^n \Delta\xi+\qu^n}) \leq \zeta((1+\Delta\xi)\qu^n+\qe^n). \label{pelkoc} 
\end{equation}
From~\eqref{pelkoa} to~\eqref{pelkoc} we now deduce  \eqref{lemm3.2b}. 
Next, computing the difference of the flux 
\begin{align*} 
\mathcal{B}_{j+1/2}^n = \tilde{q}_{j+1/2}^{n,+}X_j^n +  \tilde{q}_{j+1/2}^{n,-}X_{j+1}^n
\end{align*} 
and differentiating this expression with respect to $X_k^n$ we obtain
\begin{align*}
\frac{\partial [\Delta\mathcal{B}]_{j}^{n}}{\partial X_k^n}
&= \frac{\partial}{\partial X_k^n}\bigl(\tilde{q}_{j+1/2}^{n,+}X_j^n + \tilde{q}_{j+1/2}^{n,-}X_{j+1}^n - \tilde{q}_{j-1/2}^{n,+}X_{j-1}^n - \tilde{q}_{j-1/2}^{n,-}X_{j}^n\bigr)\\
&= \begin{cases}
-\tilde{q}_{j-1/2}^{n,+}\leq 0 & \text{if $k=j-1$,} \\
\tilde{q}_{j+1/2}^{n,+}-\tilde{q}_{j-1/2}^{n,-} %\leq \max_{j\geq 0}\{0,\tilde{q}_{j+1/2}^{n}\} - \tilde{q}_{-3/2}^{n}
   \leq \zeta\big({(1+\Delta\xi)\qu^n+\max\{\qe^n,\qf^n\}}\big)\leq \zeta{M_{q2}} &\text{if $k=j$,} \\
\tilde{q}_{j+1/2}^{n,-}\leq 0 &\text{if $k=j+1$,} \\
0  & \text{otherwise}.
\end{cases}
\end{align*}
This proves \eqref{lemm3.2c}. 
For the reaction term, the cases $k\neq j$ are trivial. 
Assuming that $k=j$ and differentiating we obtain 
\begin{equation*}
\left|\frac{\partial}{\partial X_j} R(\bp_j^nX_j^n/c,\bS_j^n)\right| 
= \frac{1}{c}\left|(\bp_j^n)^\rmT\nabla_{\!\bC}R\right| 
\leq\frac{1}{c} \sum_{i=1}^{k_{\bC}}p_j^{(i),n}\biggl|\frac{\partial R}{\partial C_j^{(i),n}}\biggr| 
= M_R\sum_{i=1}^{k_{\bC}}p_j^{(i),n} = M_R,
\end{equation*}
which implies \eqref{lemm3.2e}. Finally, \eqref{lemm3.2d} follows  from 
\begin{align*}
\Phi_{j+1/2}^{n,+} &= \big(\mathcal{B}_{j+1/2}^{n} + \beta^{n} \mathcal{E}_{j+1/2}^{n} - \mathcal{J}_{j+1/2}^n\big)^+
 \leq \tilde{q}_{j+1/2}^{n,+}X_{j}^n + \zeta{\|f'\|}X_j^n + \gamma_{j+1/2}\frac{(\beta^n)^2}{\Delta\xi}\mathcal{D}(X_{j}^n)\\
& \leq \biggl(\zeta({M_{q2} + \|f'\|}) + \zeta^2\frac{\|a\|}{\Delta\xi}\biggr)X_j^n,\\
- \Phi_{j-1/2}^{n,-} & = -\big(\mathcal{B}_{j-1/2}^{n} + \beta^{n} \mathcal{E}_{j-1/2}^{n} - \mathcal{J}_{j-1/2}^n\big)^-
\leq \tilde{q}_{j-1/2}^{n,+}X_{j}^n + \zeta{\|f'\|}X_j^n + \gamma_{j+1/2}\frac{(\beta^n)^2}{\Delta\xi}\mathcal{D}(X_{j}^n)\\
& \leq \biggl(\zeta({M_{q2} + \|f'\|}) + \frac{\zeta^2\|a\|}{\Delta\xi}\biggr)X_j^n.
\end{align*} 
\end{proof}

\begin{lemma}\label{lemma:monotoneX}
If $\smash{\boldsymbol{\mathcal{U}}_j^n\in\Omega}$ for
all~$j$ and \eqref{eq:CFL} is in effect, then $0\le X_j^{n+1}\le 1$ for all~$j$.
\end{lemma}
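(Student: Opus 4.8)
The plan is to read the scalar update \eqref{eq:scheme_X} for an interior cell as a three-point scheme
$$X_j^{n+1}=G_j\bigl(X_{j-1}^n,X_j^n,X_{j+1}^n\bigr),$$
in which the data $\bp_j^n,\bS_j^n$ entering the reaction term $R(\bp_j^nX_j^n/c,\bS_j^n)$ are frozen, and to prove that $G_j$ is nondecreasing in each of its three arguments on $[0,\hat{X}]^3$ whenever \eqref{eq:CFL} holds. Granting this monotonicity, the bounds $0\le X_j^{n+1}\le\hat{X}$ follow by comparison with the spatially constant states $X\equiv0$ and $X\equiv\hat{X}$: since $\boldsymbol{\mathcal{U}}_k^n\in\Omega$ gives $0\le X_k^n\le\hat{X}$ for all $k$, monotonicity yields $G_j(0,0,0)\le X_j^{n+1}\le G_j(\hat{X},\hat{X},\hat{X})$, so it suffices to check $G_j(0,0,0)\ge0$ and $G_j(\hat{X},\hat{X},\hat{X})\le\hat{X}$. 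The exterior cells $j=-1,N+1$ are handled separately through \eqref{eq:j=-1_Tf}, \eqref{eq:j=N+1_qu=0}, \eqref{eq:update_-1} and \eqref{eq:update_N+1}.

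Monotonicity in the off-diagonal arguments is the routine part and needs no smallness of $\tau$. Differentiating $X_j^{n+1}$ with respect to $X_{j\pm1}^n$, the upwind flux $\mathcal{B}$ carries the favourable signs of Lemma~\ref{lemma:Rbounds}\,\eqref{lemm3.2c} (the quantities $-\tilde{q}_{j-1/2}^{n,+}$ and $\tilde{q}_{j+1/2}^{n,-}$ enter $-\lambda[\Delta\mathcal{B}]_j^n$ so as to become nonnegative), the Engquist--Osher flux has the correct one-sided signs by Lemma~\ref{lemma:Engquist}, and the central diffusion difference contributes $\lambda\mu^{-1}\mu\,\gamma_{j\pm1/2}(\beta^n)^2\mathcal{D}'(X_{j\pm1}^n)\ge0$ because $\mathcal{D}'=a\ge0$. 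Hence $\partial G_j/\partial X_{j\pm1}^n\ge0$.

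The diagonal derivative is where \eqref{eq:CFL} is genuinely needed. Writing
$$\frac{\partial X_j^{n+1}}{\partial X_j^n}=\kappa_j^n-\lambda\frac{\partial[\Delta\mathcal{F}]_j^n}{\partial X_j^n}+\lambda\frac{\partial[\Delta\mathcal{J}]_j^n}{\partial X_j^n}+\tau\gamma_j\frac{\partial R}{\partial X_j^n},$$
I would bound each piece from below: $\kappa_j^n\ge1-\zeta\tau M_{q1}$ by \eqref{lemm3.2a}; $\partial[\Delta\mathcal{B}]_j^n/\partial X_j^n\le\zeta M_{q2}$ by \eqref{lemm3.2c} and $|\partial[\Delta\mathcal{E}]_j^n/\partial X_j^n|\le\|f'\|$ by Lemma~\ref{lemma:Engquist}, so that the convective term is at least $-\lambda C_1$; the reaction obeys $\tau\gamma_j|\partial R/\partial X_j^n|\le\tau M_R$ by \eqref{lemm3.2e}; and the diffusion self-term equals $-\mu(\gamma_{j+1/2}+\gamma_{j-1/2})(\beta^n)^2\mathcal{D}'(X_j^n)\ge-2\mu C_2$. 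Assembling these gives $\partial X_j^{n+1}/\partial X_j^n\ge1-\tau\zeta M_{q1}-\tau M_R-\lambda C_1-2\mu C_2$, which is nonnegative exactly because multiplying the first branch of the maximum in \eqref{eq:CFL} by $\tau$ produces $2\lambda C_1+2\mu C_2$ together with $\tau\zeta M_{q1}+\tau\max\{M_R,M_{\bC},M_{\bS}\}$. The central difference of $\mathcal{D}$ is the origin of the restrictive $\mu=\tau/\Delta\xi^2$ term.

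For the endpoint evaluations, the lower bound is direct: at $X\equiv0$ the settling flux vanishes since $f(0)=0$ by \eqref{fXdef}, the upwind, Engquist--Osher and diffusion differences all vanish, the feed term $\lambda\delta_{j,0}\beta^nq_{\rm f}^nX_{\rm f}^n$ is nonnegative, and $R(\boldsymbol{0},\bS_j^n)\ge0$ because at vanishing solids every removal reaction $r^{(l)}=\bar{r}^{(l)}C^{(k)}$ vanishes while every production term keeps the sign of $\br\ge0$, so $G_j(0,0,0)\ge0$. For the upper bound, at $X\equiv\hat{X}$ one has $f(\hat{X})=0$ by \eqref{vhsass}, vanishing diffusion differences, and vanishing reaction by \eqref{eq:techRC}; for the genuine interior cells $1\le j\le N$ the identity $\tilde{q}_{j+1/2}^n-\tilde{q}_{j-1/2}^n=\beta^n(\bar{z}')^n\Delta\xi$ combines with $\kappa_j^n=1+\tau\beta^n(\bar{z}')^n$ to give $G_j(\hat{X},\hat{X},\hat{X})=\hat{X}$ exactly, i.e.\ the scheme reproduces the maximal constant state. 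I expect the \emph{main obstacle} to lie in the remaining cells: at the feed cell $j=0$ the half weight $\gamma_0=\tfrac12$, the modified coefficient $\kappa_0^n$, the averaged surface term and the admissibility $X_{\rm f}^n\le\hat{X}$ must be reconciled, and at the exterior cells $j=-1,N+1$ governed by \eqref{eq:update_-1}--\eqref{eq:update_N+1} the bound $G\le\hat{X}$ must be verified using the flow configuration of the active stage ($t\in T_{\rm e}$ or $t\in T_{\rm f}$); this localized bookkeeping, rather than the interior monotonicity, is the delicate part of the argument.
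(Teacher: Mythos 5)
Your proposal is correct and takes essentially the same route as the paper's proof: the same three-point monotone reading $X_j^{n+1}=\mathcal{H}_X^n(X_{j-1}^n,X_j^n,X_{j+1}^n)$ with frozen $\bp_j^n,\bS_j^n$, the same derivative bounds from Lemmas~\ref{lemma:Engquist} and~\ref{lemma:Rbounds} yielding $\partial X_j^{n+1}/\partial X_j^n\geq 1-\tau\zeta M_{q1}-\tau M_R-\lambda C_1-2\mu C_2\geq 0$ under \eqref{eq:CFL}, and the same sandwich between the constant states $0$ and $\hat{X}$ using \eqref{vhsass}, \eqref{eq:techRC} and $f(0)=f(\hat{X})=0$. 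The endpoint evaluations you defer as ``localized bookkeeping'' are precisely what the paper carries out case by case, and they close exactly as you anticipate: for $j\geq 1$ the identity $\tilde{q}_{j+1/2}^n-\tilde{q}_{j-1/2}^n=\beta^n(\bar{z}')^n\Delta\xi$ gives $\mathcal{H}_X^n(\hat{X},\hat{X},\hat{X})=\hat{X}$ exactly, while at the feed and exterior cells one obtains $\hat{X}(1-\tau\beta^n q_{\mathrm{out}}^n)\leq\hat{X}$ when $\qe^n>0$ and $\hat{X}-\lambda\beta^n(\hat{X}-X_{\mathrm{f}}^n)\qf^n\leq\hat{X}$ when $\qe^n=0$, using $X_{\mathrm{f}}^n\leq\hat{X}$ as you predicted.
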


\begin{proof}
We write the update formula \eqref{eq:scheme_X} for $j=-1,\ldots,N+1$ as
\begin{equation*}
X_j^{n+1} = \mathcal{H}_{X}^n \bigl(X_{j-1}^{n},X_j^{n},X_{j+1}^{n}\bigr)
\end{equation*}
and we shall show that $\mathcal{H}_{X}^n$ is a monotone function in each of its variables.
We recall that
\begin{equation*}
\lambda[\Delta\mathcal{J}]_j^n = \mu(\beta^n)^2\bigl(\gamma_{j+1/2}\big(\mathcal{D}(X_{j+1}^{n+1}) - \mathcal{D}(X_{j}^{n+1})\bigr) - \gamma_{j-1/2}\big(\mathcal{D}(X_{j}^{n+1}) - \mathcal{D}(X_{j-1}^{n+1})\big)\bigr),
\end{equation*}
$a=\mathcal{D}'$, and we differentiate to obtain, by means of  \eqref{eq:CFL}  and Lemmas~\ref{lemma:Engquist} and \ref{lemma:Rbounds},
\begin{align*}
\frac{\partial X_j^{n+1}}{\partial X_{j-1}^{n}} 
&= - \lambda\frac{\partial[\Delta\mathcal{B}]_j^n}{\partial X_{j-1}^{n}} - \lambda\beta^n\frac{\partial[\Delta\mathcal{E}]_{j}^n}{\partial X_{j-1}^{n}} +  \lambda\frac{\partial[\Delta\mathcal{J}]_j^n}{\partial X_{j-1}^{n}}\\
&= \lambda\tilde{q}_{j-1/2}^{n,+} + \lambda\beta^n\frac{\partial\mathcal{E}_{j-1/2}}{\partial X_{j-1}^{n}} + \mu(\beta^n)^2\gamma_{j-1/2} a(X_{j-1}^n) \geq 0,\\
\frac{\partial X_j^{n+1}}{\partial X_{j}^{n}} 
&=\kappa^n - \lambda\frac{\partial[\Delta\mathcal{B}]_j^n}{\partial X_{j}^{n}} - \lambda\beta^n\frac{\partial[\Delta\mathcal{E}]_{j}^n}{\partial X_{j}^{n}} + \lambda\frac{\partial[\Delta\mathcal{J}]_j^n}{\partial X_{j}^{n}} + \frac{\partial}{\partial X_j} R(\bp_j^nX_j^n/c,\bS_j^n)\\
&\geq 1-{\tau\zeta M_{q1}} - {\lambda\zeta M_{q2}} - \zeta{\lambda}\|f'\| - {\mu\zeta^22\|a\|} -\tau M_R \geq 0,\\
\frac{\partial X_j^{n+1}}{\partial X_{j+1}^{n}}
&= - \lambda\frac{\partial[\Delta\mathcal{B}]_j^n}{\partial X_{j+1}^{n}} - \lambda\beta^n\frac{\partial[\Delta\mathcal{E}]_{j}^n}{\partial X_{j+1}^{n}} 
= -\lambda\tilde{q}_{j+1/2}^{n,-} - \lambda\beta^n\frac{\partial\mathcal{E}_{j+1/2}}{\partial X_{j+1}^{n}} \geq 0.
\end{align*}
The proved monotonicity of $\mathcal{H}_{X}^n$ and the assumptions
\eqref{eq:techRC} imply that, for $j\neq 0$,
\begin{align*}
0& =\mathcal{H}_{X}^n(0,0,0)\leq X_j^{n+1} 
= \mathcal{H}_{X}^n(X_{j-1}^{n},X_j^{n},X_{j+1}^{n})
\leq\mathcal{H}_{X}^n(\hat{X},\hat{X},\hat{X}) \\
 & =  \kappa_j^n\hat{X}-\lambda(\tilde{q}_{j+1/2}^n-\tilde{q}_{j-1/2}^n)\hat{X} \\
& = 
\begin{cases}
\hat{X}\bigl(1-\tau\beta^n q_{\mathrm{out}}^n  -\lambda\beta^n (- (\xi_{-1/2} q_{\mathrm{out}}^n +\qe^n ) + (\xi_{-3/2} q_{\mathrm{out}}^n +\qe^n))\bigr) & \\
 = \hat{X}\bigl(1-\tau\beta^n q_{\mathrm{out}}^n  -\lambda\beta^n (\frac{\Delta\xi}{2} q_{\mathrm{out}}^n -\frac{3\Delta\xi}{2} q_{\mathrm{out}}^n 
  )\bigr) = \hat{X} & \text{if $j=-1$ and $\qe^n>0$,} \\
\hat{X} & \text{if $j=-1$ and $\qe^n=0$,} \\
\hat{X}\bigl(1+\tau\beta^n(\bar{z}')^n & \\ \quad - \lambda\beta^n (-(\bar{z}')^n(1-\xi_{j+1/2})+(\bar{z}')^n(1-\xi_{j-1/2}) )\bigr) =\hat{X} & 
 \text{if $j\geq 1$.} 
\end{cases} 
\end{align*}
For $j=0$ we have, since $\alpha(\xi_{1/2},t)= -\bar{z}'(t)(1-\Delta\xi/2)\beta(t)$, and assuming $X_\mathrm{f}^n\leq\hat{X}$,
\begin{align*}
0 & \leq\lambda\beta^n X_\mathrm{f}^n\qf^n = \mathcal{H}_{X}^n(0,0,0) \leq X_0^{n+1}=\mathcal{H}_{X}^n(X_{-1}^{n},X_0^{n},X_{1}^{n})
\leq\mathcal{H}_{X}^n(\hat{X},\hat{X},\hat{X})\\
& =\begin{cases}
\kappa_0^n\hat{X} - \lambda\big(\alpha_{1/2}^n+\beta^n\qu^n+\beta^n(-\frac{\Delta\xi}{2} q_{\mathrm{out}}^n +\qe^n)\big)\hat{X} &\\
= \hat{X}\bigl(1-\tau\beta^n q_{\mathrm{out}}^n  - \lambda (-q_{\mathrm{out}}^n (1-\frac{\Delta\xi}{2})\beta^n  +
 \beta^n\qu^n+\beta^n(-\frac{\Delta\xi}{2} q_{\mathrm{out}}^n +\qe^n) )\bigr) & \\  = \hat{X} (1-\tau\beta^n q_{\mathrm{out}}^n )\leq\hat{X} 
  & \text{if $\qe^n>0$,} \\
\kappa_0^n\hat{X} - \lambda\big(\alpha_{1/2}^n+\beta^n\qu^n)\big)\hat{X} + \lambda\beta^n X_\mathrm{f}^n\qf^n &\\
= (1-\frac{\tau}{2}\beta^n q_{\mathrm{out}}^n )\hat{X} - \lambda\big(-(\qu^n-\qf^n)(1-\frac{\Delta\xi}{2})\beta^n + \beta^n\qu^n)\big)\hat{X} + \lambda\beta^n X_\mathrm{f}^n\qf^n &\\
= \hat{X}- \lambda\big(-(\qu^n-\qf^n)\beta^n + \beta^n\qu^n)\big)\hat{X} + \lambda\beta^n X_\mathrm{f}^n\qf^n & \\ = \hat{X}- \lambda\beta^n(\hat{X}-X_\mathrm{f}^n)\qf^n \leq \hat{X} & \text{if $\qe^n=0$.} 
\end{cases}
\end{align*} 
\end{proof}

\begin{lemma}\label{lemma:p_pos}
If $\smash{\boldsymbol{\mathcal{U}}_j^n\in\Omega}$ for all~$j$ and \eqref{eq:CFL} holds, then 
\begin{align}  \label{p_posineq} 
p_j^{(k),n+1}\geq  0 \quad \text{\em  for all~$k=1, \dots, 
 k_{\boldsymbol{C}}$ and all~$j$}.
 \end{align} 
\end{lemma}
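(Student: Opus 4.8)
The plan is to prove nonnegativity of the \emph{component mass} $m_j^{(k),n+1}\coloneqq p_j^{(k),n+1}X_j^{n+1}$, which is exactly the left-hand side of the update~\eqref{eq:scheme_p}. Since Lemma~\ref{lemma:monotoneX} already gives $X_j^{n+1}\geq 0$, the inequality~\eqref{p_posineq} follows once $m_j^{(k),n+1}\geq 0$ is established: if $X_j^{n+1}>0$ then $p_j^{(k),n+1}=m_j^{(k),n+1}/X_j^{n+1}\geq 0$, while if $X_j^{n+1}=0$ the scheme sets $p_j^{(k),n+1}\coloneqq p_j^{(k),n}\geq 0$. The boundary layers $j=-1$ and $j=N+1$ covered by~\eqref{eq:j=-1_Tf} and~\eqref{eq:j=N+1_qu=0} give $\bp=\bzero$ and are immediate, so I would concentrate on the interior update and read $m_j^{(k),n+1}$ as a sum of a few clearly-signed pieces.

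First I would expand the $k$-th component of the discrete divergence $-\lambda[\Delta\boldsymbol{\Phi}_{\bp}]_j^n$ via the upwind definition~\eqref{eq:Phipflux} as
\begin{equation*}
\lambda\Phi_{j-1/2}^{n,+}p_{j-1}^{(k),n}
- \lambda\bigl(\Phi_{j+1/2}^{n,+}-\Phi_{j-1/2}^{n,-}\bigr)p_j^{(k),n}
- \lambda\Phi_{j+1/2}^{n,-}p_{j+1}^{(k),n}.
\end{equation*}
Because $\bp^n\geq 0$ and the upwind coefficients satisfy $\Phi_{j-1/2}^{n,+}\geq 0$ and $-\Phi_{j+1/2}^{n,-}\geq 0$, the two neighbour contributions are nonnegative, as is the feed term $\lambda\delta_{j,0}\beta^n\qf^n p_\mathrm{f}^{(k)}X_\mathrm{f}^n$ (all its factors being nonnegative). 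Hence $m_j^{(k),n+1}$ reduces, up to these nonnegative addends, to a single coefficient multiplying $p_j^{(k),n}$.

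The decisive step is the reaction term $\tau\gamma_j cR_{\bC}^{(k)}$. I would split $R_{\bC}^{(k)}=\sum_{l\in I_{\bC,k}^+}\sigma_{\bC}^{(k,l)}r^{(l)}+\sum_{l\in I_{\bC,k}^-}\sigma_{\bC}^{(k,l)}r^{(l)}$, the production sum over $I_{\bC,k}^+$ being nonnegative. For the destruction sum I use the structural assumption $r^{(l)}=\bar r^{(l)}C^{(k)}$ for $l\in I_{\bC,k}^-$ together with $C_j^{(k),n}=p_j^{(k),n}X_j^n/c$ to rewrite it as $p_j^{(k),n}X_j^n\sum_{l\in I_{\bC,k}^-}\sigma_{\bC}^{(k,l)}\bar r^{(l)}$, a term proportional to $p_j^{(k),n}$ whose coefficient is bounded below by $-M_{\bC}$ by the very definition of $M_{\bC}$. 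Collecting everything multiplying $p_j^{(k),n}$ and using $0\leq\gamma_j\leq 1$, the coefficient becomes $\kappa_j^nX_j^n-\lambda(\Phi_{j+1/2}^{n,+}-\Phi_{j-1/2}^{n,-})-\tau M_{\bC}X_j^n$, which I estimate from below by~\eqref{lemm3.2a} and~\eqref{lemm3.2d} to obtain
\begin{equation*}
\Bigl[1-\zeta\tau M_{q1}-\tau M_{\bC}-\frac{2\tau}{\Delta\xi}\Bigl(C_1+\frac{C_2}{\Delta\xi}\Bigr)\Bigr]X_j^n.
\end{equation*}
Since $M_{\bC}\leq\max\{M_R,M_{\bC},M_{\bS}\}$ and $C_1+C_2/\Delta\xi$ is dominated by the maximum appearing in~\eqref{eq:CFL}, the CFL condition forces this bracket to be nonnegative; thus $m_j^{(k),n+1}$ is a sum of nonnegative contributions and~\eqref{p_posineq} follows.

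The main obstacle is the reaction term: one must recognize that the destruction channels are exactly those with $\sigma_{\bC}^{(k,l)}<0$, that the assumption $r^{(l)}=\bar r^{(l)}C^{(k)}$ turns each such channel into a term proportional to $p_j^{(k),n}$ (so it folds into the coefficient of $p_j^{(k),n}$ rather than appearing as an uncontrolled negative addend), and that its aggregate coefficient is controlled by $M_{\bC}$. This is precisely what ties the positivity of $\bp$ to the $M_{\bC}$-contribution in the CFL condition; the flux and $\kappa_j^n$ estimates are then a routine invocation of Lemma~\ref{lemma:Rbounds}.
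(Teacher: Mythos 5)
Your proposal is correct and follows essentially the same route as the paper's proof: nonnegativity of the component mass $p_j^{(k),n+1}X_j^{n+1}$ via the upwind expansion of $[\Delta\boldsymbol{\Phi}_{\bp}]_j^n$, dropping the nonnegative neighbour and feed contributions, folding the destruction channels into the coefficient of $p_j^{(k),n}$ through the structural assumption $r^{(l)}=\bar{r}^{(l)}C^{(k)}$, and bounding everything with \eqref{lemm3.2a}, \eqref{lemm3.2d} and \eqref{eq:CFL}, together with the convention $p_j^{(k),n+1}\coloneqq p_j^{(k),n}$ when $X_j^{n+1}=0$. Incidentally, your sign $-\tau M_{\bC}$ in the final bracket is the correct one given the definition of $M_{\bC}$ as a supremum of $\sum_{l\in I_{\bC,k}^-}|\sigma_{\bC}^{(k,l)}|\bar{r}^{(l)}$; the paper's displayed ``$+\tau M_{\bC}$'' is a typo, and the CFL condition covers the correct sign.
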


\begin{proof}  
If $\smash{X_j^{n+1}=0}$, we define 
\begin{align} \label{pkdef} 
 p_j^{(k),n+1}\coloneqq p_j^{(k),n}\in[0,1] \quad \text{for all $k=1, \dots, 
 k_{\boldsymbol{C}}$.} 
 \end{align} 
If $\smash{X_j^{n+1}>0}$, we have for each $k \in \{ 1, \dots, 
 k_{\boldsymbol{C}} \}$
\begin{align*}
p_j^{(k),n+1}X_j^{n+1} & = \kappa_j^n p_j^{(k),n}X^{n}_j - \lambda\bigl(\Phi_{j+1/2}^{n,+}p_j^{(k),n} + \Phi_{j+1/2}^{n,-}p_{j+1}^{(k),n} - \Phi_{j-1/2}^{n,+}p_{j-1}^{(k),n} - \Phi_{j-1/2}^{n,-}p_{j}^{(k),n}\bigr)\\
 & \quad + \lambda\delta_{j,0}\beta^n q_{\rm f}^n p_{\rm f}^{(k),n} X_{\rm f}^n + \tau\gamma_j c R_j^{(k),n}(\bp^{n}_j X^{n}_j/c,\bS_j^n)\\
& \geq \left(1 - \zeta{\tau M_{q1}}\right)p_j^{(k),n}X^{n}_j - 2\lambda\left(\zeta({M_{q2} + \|f'\|}) 
+ \frac{\zeta^2\|a\|}{\Delta\xi}\right)p_j^{(k),n}X_j^n\\
& \quad + \tau c\sum_{l\in I_{\bC,k}^-}\sigma_{\bC}^{(k,l)}\bar{r}^{(l)} \bigl(\bp_j^{n}X_j^n/c, \bS_j^{n} \bigr)p_j^{(k),n}X_j^n/c\\
& \geq \left(1 - \zeta({\tau M_{q1}}) - \frac{2\tau}{\Delta\xi}\left(\zeta({M_{q2} + \|f'\|})
+ \frac{\zeta^2\|a\|}{\Delta\xi}\right) + \tau M_{\bC}\right)p_j^{(k),n}X_j^n \geq 0.
\end{align*}
This implies \eqref{p_posineq}. 
\end{proof}

\begin{lemma}\label{lemma:sum_p}
If $\smash{\boldsymbol{\mathcal{U}}_j^n\in\Omega}$ for all~$j$ and \eqref{eq:CFL} holds, then
\begin{align}  \label{p_sumineq} 
p_j^{(1),n+1} +  \cdots + p_j^{(k_{\boldsymbol{C}}),n+1} =  1 \quad \text{\em  for all~$j$}.
 \end{align} 
\end{lemma}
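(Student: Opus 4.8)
The plan is to show that summing the $k_{\boldsymbol{C}}$ scalar equations that make up the vector update \eqref{eq:scheme_p} reproduces \emph{exactly} the scalar update \eqref{eq:scheme_X} for~$X$. Since $\sum_k p_j^{(k),n+1}X_j^{n+1}=X_j^{n+1}\sum_k p_j^{(k),n+1}$, matching the summed right-hand side of \eqref{eq:scheme_p} with the right-hand side of \eqref{eq:scheme_X} gives $X_j^{n+1}\sum_k p_j^{(k),n+1}=X_j^{n+1}$, from which \eqref{p_sumineq} follows by dividing by $X_j^{n+1}$. Before doing this I would dispose of the degenerate case $X_j^{n+1}=0$: there the convention \eqref{pkdef} from the proof of Lemma~\ref{lemma:p_pos} sets $p_j^{(k),n+1}\coloneqq p_j^{(k),n}$, and these sum to one by the hypothesis $\boldsymbol{\mathcal{U}}_j^n\in\Omega$. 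Hence only cells with $X_j^{n+1}>0$ require the term-by-term comparison.

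The non-flux terms match immediately. The accumulation term contributes $\kappa_j^n X_j^n\sum_k p_j^{(k),n}=\kappa_j^n X_j^n$ by the hypothesis that percentages sum to one at time level~$n$; the feed term contributes $\lambda\delta_{j,0}\beta^n\qf^n\Xf^n\sum_k p_\mathrm{f}^{(k),n}=\lambda\delta_{j,0}\beta^n\qf^n\Xf^n$, using that the feed percentages sum to one (a property of the data); and the reaction term contributes $\tau\gamma_j c\sum_k R_{\boldsymbol{C}}^{(k)}(\boldsymbol{p}_j^nX_j^n/c,\boldsymbol{S}_j^n)=\tau\gamma_j R(\boldsymbol{p}_j^nX_j^n/c,\boldsymbol{S}_j^n)$ directly from the definition $R=c\sum_k R_{\boldsymbol{C}}^{(k)}$. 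Each of these coincides with the corresponding term in \eqref{eq:scheme_X}.

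The crux, and the step I expect to require the most care, is the flux term. From the definition \eqref{eq:Phipflux} of $\boldsymbol{\Phi}_{\boldsymbol{p}}$ via the upwind operator, its $k$-th component at $\xi_{j+1/2}$ is $\Phi_{j+1/2}^{n,+}p_j^{(k),n}+\Phi_{j+1/2}^{n,-}p_{j+1}^{(k),n}$. Summing over~$k$ and again invoking that percentages sum to one in \emph{both} cells $j$ and $j+1$ yields
\begin{equation*}
\sum_{k=1}^{k_{\boldsymbol{C}}}\bigl(\Phi_{j+1/2}^{n,+}p_j^{(k),n}+\Phi_{j+1/2}^{n,-}p_{j+1}^{(k),n}\bigr)=\Phi_{j+1/2}^{n,+}+\Phi_{j+1/2}^{n,-}=\Phi_{j+1/2}^n=\mathcal{F}_{j+1/2}^n-\mathcal{J}_{j+1/2}^n.
\end{equation*}
Taking the corresponding difference across $\xi_{j\pm1/2}$ then gives $-\lambda\sum_k[\Delta\boldsymbol{\Phi}_{\boldsymbol{p}}]_j^{(k),n}=-\lambda[\Delta\mathcal{F}]_j^n+\lambda[\Delta\mathcal{J}]_j^n$, precisely the convective--diffusive flux contribution in \eqref{eq:scheme_X}.

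The decisive point making this reduction work is that the upwind selection in $\boldsymbol{\Phi}_{\boldsymbol{p}}$ is driven by the \emph{same} scalar sign pattern of $\Phi_{j+1/2}^n$ for every component~$k$; this is why the percentage sums factor out and the vector flux collapses to the scalar flux $\Phi_{j+1/2}^n$. Collecting the four contributions reproduces the right-hand side of \eqref{eq:scheme_X}, which establishes $X_j^{n+1}\sum_k p_j^{(k),n+1}=X_j^{n+1}$ and hence \eqref{p_sumineq}. I would note that no smallness of $\tau$ is actually needed here: unlike Lemmas~\ref{lemma:monotoneX} and~\ref{lemma:p_pos}, this identity is purely algebraic (a discrete consistency between the $X$- and $\boldsymbol{p}X$-schemes), and the CFL hypothesis enters only insofar as it guarantees $X_j^{n+1}\ge 0$ so that the division is legitimate where $X_j^{n+1}>0$.
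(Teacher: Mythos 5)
Your proposal is correct and follows essentially the same route as the paper's proof: dispose of the case $X_j^{n+1}=0$ via the convention \eqref{pkdef}, sum the component equations \eqref{eq:scheme_p}, observe that the common sign pattern of $\Phi_{j\pm1/2}^n$ makes the upwind fluxes collapse to the scalar flux so the summed right-hand side coincides with that of \eqref{eq:scheme_X}, and conclude $X_j^{n+1}\bigl(1-\sum_k p_j^{(k),n+1}\bigr)=0$. Your closing observation that the identity is purely algebraic (with \eqref{eq:CFL} playing no essential role) is a fair gloss that the paper leaves implicit.
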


\begin{proof}
If $X_j^{n+1}=0$, then by \eqref{pkdef}, 
\begin{align*}  
 p_j^{(1),n+1}+\cdots+p_j^{(k_{\bC}),n+1}=p_j^{(1),n}+\cdots+p_j^{(k_{\bC}),n} =1, 
 \end{align*} 
  so let us 
assume that $X_j^{n+1}>0$.
We sum up all  equations in~\eqref{eq:scheme_p} and utilize  that 
\begin{align*} 
p_j^{(1),n}+\cdots+p_j^{(k_{\bC},n)}=1
\end{align*} 
along with 
\begin{align*}
\sum_{k=1}^{k_{\bC}}[\Delta\Phi_{\bp}^{(k)}]_j^n & = \sum_{k=1}^{k_{\bC}} \big(\Upw(\Phi_{\bp,j+1/2}^{(k),n},p_j^{(k),n},p_{j+1}^{(k),n}) -\Upw(\Phi_{\bp,j-1/2}^{(k),n},p_j^{(k),n},p_{j+1}^{(k),n}) \big)\\
&= \sum_{k=1}^{k_{\bC}}\big(\Phi_{j+1/2}^{n,+}p_j^{(k),n} + \Phi_{j+1/2}^{n,-}p_{j+1}^{(k),n} - \Phi_{j-1/2}^{n,+}p_{j-1}^{(k),n} - \Phi_{j-1/2}^{n,-}p_{j}^{(k),n}\big)\\
& = \Phi_{j+1/2}^{n,+} + \Phi_{j+1/2}^{n,-} - \Phi_{j-1/2}^{n,+} - \Phi_{j-1/2}^{n,-} =[\Delta\Phi]_{j}^n.
\end{align*}
Then the sum of the equations in~\eqref{eq:scheme_p} is
\begin{align} \label{eq3.7} 
X^{n+1}_j\sum_{k=1}^{k_{\bC}}p_j^{(k),n+1}  = \kappa_j^n X^{n}_j - \lambda [\Delta{\Phi}]_j^n + \lambda\delta_{j,0}\beta^nq_{\rm f}^nX_{\rm f}^n + \tau\gamma_j R(\bp_j^nX^{n}_j/c,\bS_j^n).
\end{align}
The right-hand side is identical to that   of~\eqref{eq:scheme_X}.
Hence, subtracting \eqref{eq3.7} from~\eqref{eq:scheme_X}  we get 
\begin{align*} 
 X^{n+1}_j \bigl( 1- (p_j^{(1),n+1}+ \cdots + p_j^{(k_{\boldsymbol{C}}),n+1})\bigr) =0, 
 \end{align*} 
which proves  the desired result \eqref{p_sumineq}. 
\end{proof}

\begin{lemma}\label{lemma:S_pos}
If $\smash{\boldsymbol{\mathcal{U}}_j^n\in\Omega}$ for all~$j$ and \eqref{eq:CFL} holds, 
then 
\begin{align*}  
S_j^{(k),n+1}\geq  0 \quad \text{\em  for all~$k=1, \dots, 
 k_{\boldsymbol{S}}$ and all~$j$}.
 \end{align*} 
\end{lemma}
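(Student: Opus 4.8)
The plan is to follow the pattern of Lemma~\ref{lemma:p_pos}, treating each component $S^{(k)}$ separately and exploiting that the $\bS$-flux is built from the liquid-specific concentrations $T_j^{(k),n}\coloneqq S_j^{(k),n}/(\rho_X-X_j^n)$. Since $\boldsymbol{\mathcal{U}}_j^n\in\Omega$ gives $0\le X_j^n\le\hat{X}<\rho_X$ together with $S_j^{(k),n}\ge0$, each $T_j^{(k),n}$ is well defined and nonnegative. Writing $W_{j+1/2}^n\coloneqq\rho_X\tilde{q}_{j+1/2}^n-\Phi_{j+1/2}^n$, the flux is $\boldsymbol{\Phi}_{\bS,j+1/2}^n=\Upw(W_{j+1/2}^n;\bS_j^n/(\rho_X-X_j^n),\bS_{j+1}^n/(\rho_X-X_{j+1}^n))$, so that its $k$-th component expands as
\[
[\Delta\Phi_{\bS}^{(k)}]_j^n=\bigl(W_{j+1/2}^{n,+}-W_{j-1/2}^{n,-}\bigr)T_j^{(k),n}+W_{j+1/2}^{n,-}T_{j+1}^{(k),n}-W_{j-1/2}^{n,+}T_{j-1}^{(k),n}.
\]
Inserting this into the update~\eqref{eq:scheme_S}, the coefficients of the neighbouring states $T_{j+1}^{(k),n}$ and $T_{j-1}^{(k),n}$ are $-\lambda W_{j+1/2}^{n,-}\ge0$ and $\lambda W_{j-1/2}^{n,+}\ge0$, while the feed contribution $\lambda\delta_{j,0}\beta^n\qf^n S_\mathrm{f}^{(k),n}$ is nonnegative; all of these multiply nonnegative quantities and may be discarded in a lower bound.

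It then remains to control the diagonal term together with the reaction. Using the structural assumption on the substrate reactions --- the $\bS$-analogue of the one imposed on $\bC$, namely $r^{(l)}=\bar{r}_{\bS}^{(l)}S^{(k)}$ whenever $l\in I_{\bS,k}^-$ --- the entries of $R_{\bS}^{(k)}$ with $l\in I_{\bS,k}^+$ are nonnegative, and those with $l\in I_{\bS,k}^-$ contribute at least $-M_{\bS}S_j^{(k),n}$, where I would use $0\le\gamma_j\le1$ and the definition of $M_{\bS}$. Substituting $T_j^{(k),n}=S_j^{(k),n}/(\rho_X-X_j^n)$ and collecting the surviving terms yields
\[
S_j^{(k),n+1}\ge\Bigl(\kappa_j^n-\frac{\lambda\bigl(W_{j+1/2}^{n,+}-W_{j-1/2}^{n,-}\bigr)}{\rho_X-X_j^n}-\tau M_{\bS}\Bigr)S_j^{(k),n},
\]
so the assertion reduces to proving that the bracket is nonnegative, which is precisely the content of the second argument of the maximum in~\eqref{eq:CFL}.

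The main work lies in the estimate of $W_{j\pm1/2}^n$. Invoking $\kappa_j^n\ge1-\zeta\tau M_{q1}$ from~\eqref{lemm3.2a} and $\rho_X-X_j^n\ge\rho_X-\hat{X}$, it suffices to bound both $W_{j+1/2}^{n,+}$ and $-W_{j-1/2}^{n,-}$ by $\zeta M_{q2}\rho_X+C_1\hat{X}+C_2\hat{X}/\Delta\xi$. For the first I would use $W_{j+1/2}^{n,+}\le\rho_X\tilde{q}_{j+1/2}^{n,+}-\Phi_{j+1/2}^{n,-}$, estimating $\rho_X\tilde{q}_{j+1/2}^{n,+}\le\zeta M_{q2}\rho_X$ by~\eqref{lemm3.2b} and $-\Phi_{j+1/2}^{n,-}\le(C_1+C_2/\Delta\xi)\hat{X}$ by applying~\eqref{lemm3.2d} at index $j+1$ together with $X_{j+1}^n\le\hat{X}$; the estimate $-W_{j-1/2}^{n,-}\le\Phi_{j-1/2}^{n,+}-\rho_X\tilde{q}_{j-1/2}^{n,-}$ is symmetric. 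The delicate point, and the step whose constants must be tracked carefully, is to arrange these contributions so that after division by $\rho_X-X_j^n\ge\rho_X-\hat{X}$ they land exactly inside the $(\rho_X-\hat{X})^{-1}$-weighted entry of~\eqref{eq:CFL}; the remaining manipulations are routine. Finally, the boundary cells are treated apart: for $j=-1$ with $\qe^n=0$ and $j=N+1$ with $\qu^n=0$, nonnegativity is immediate from~\eqref{eq:j=-1_Tf} and~\eqref{eq:j=N+1_qu=0}, whereas in the outflow cases the $\bS$-analogues of the pure-advection updates~\eqref{eq:update_-1} and~\eqref{eq:update_N+1} carry no diffusion and reduce, under~\eqref{eq:CFL}, to combinations of nonnegative states with nonnegative coefficients.
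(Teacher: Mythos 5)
Your proposal is correct and takes essentially the same route as the paper's proof: expand the update \eqref{eq:scheme_S} in terms of the quotients $S_{j\pm1}^{(k),n}/(\rho_X-X_{j\pm1}^n)$, discard the nonnegative off-diagonal, feed, and positive-stoichiometry contributions, bound the reaction below by $-\tau M_{\bS}S_j^{(k),n}$, and estimate the diagonal terms $(\rho_X\tilde{q}_{j+1/2}^n-\Phi_{j+1/2}^n)^+$ and $-(\rho_X\tilde{q}_{j-1/2}^n-\Phi_{j-1/2}^n)^-$ via the bounds of Lemma~\ref{lemma:Rbounds} exactly as you do. Your constant $\zeta M_{q2}\rho_X+C_1\hat{X}+C_2\hat{X}/\Delta\xi$ coincides with the one appearing in the paper's own proof (there written $\zeta\bigl((\rho_X+\hat{X})M_{q2}+\|f'\|\hat{X}+\zeta\|a\|\hat{X}/\Delta\xi\bigr)$), so the slight mismatch with the literal term $\zeta\rho_X$ in the second entry of the maximum in \eqref{eq:CFL} is an inconsistency in the paper's stated constant, not a gap in your argument.
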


\begin{proof}
From  the update formula for a component $\smash{S_j^{(k),n}}$ of $\smash{\bS_j^n}$ we get 
\begin{align*}
 S_j^{(k),n+1}  & = \kappa_j^n S_j^{(k),n} 
- \lambda\bigg(\frac{S_j^{(k),n}}{\rho_X-X^{n}_j}(\rho_X\tilde{q}_{j+1/2}^n-\Phi_{j+1/2}^{n})^+ + \frac{S_{j+1}^{(k),n}}{\rho_X-X^{n}_{j+1}}(\rho_X\tilde{q}_{j+1/2}^n-\Phi_{j+1/2}^{n})^-\\
& \quad  - \frac{S_{j-1}^{(k),n}}{\rho_X-X^{n}_{j-1}}(\rho_X\tilde{q}_{j-1/2}^n-\Phi_{j-1/2}^{n})^+ 
- \frac{S_{j}^{(k),n}}{\rho_X-X^{n}_{j}}(\rho_X\tilde{q}_{j-1/2}^n-\Phi_{j-1/2}^{n})^-\bigg)\\
 & \quad + \lambda\delta_{j,0}\beta^n q_{\rm f}^n S_{\rm f}^{(k),n} + \tau\gamma_j R_{\bS,j}^{(k),n}(\bp^{n}_j X^{n}_j/c,\bS_j^n)\\
& \geq \left(1 - {\tau\zeta M_{q1}}\right)S_j^{(k),n} 
-  \frac{\lambda S_j^{(k),n}}{\rho_X-X^{n}_j} \bigl((\rho_X\tilde{q}_{j+1/2}^n-\Phi_{j+1/2}^{n})^+ 
- (\rho_X\tilde{q}_{j-1/2}^n-\Phi_{j-1/2}^{n})^-\bigr)\\
& \quad + \tau\sum_{l\in I_{\bS,k}^-}\sigma_{\bS}^{(k,l)}\bar{r}^{(l)} \bigl(\bp_j^{n}X_j^n/c, \bS_j^{n} \bigr)S_j^{(k),n}\\
& \geq \biggl(1 - {\tau\zeta M_{q1}}  
%\\ & \qquad
- \frac{2 \tau \zeta }{\Delta\xi(\rho_X-\hat{X})}\biggl((\rho_X+\hat{X}) M_{q2}  +  {\|f'\|\hat{X}}
+ \frac{\zeta \|a\|\hat{X}}{\Delta\xi} \biggr)  + \tau M_C\biggr)S_j^{(k),n}  \geq 0.
\end{align*} 
\end{proof}
 
\section{A semi-implicit scheme}  \label{scheme:si} 

\subsection{Semi-implicit scheme for the update of~$X$} \label{subsec:siX} 

To obtain a semi-implicit scheme, we write out several terms in~\eqref{eq:scheme_X}--\eqref{eq:scheme_S} and evaluate those containing the coefficient~$\mu=\tau/\Delta\xi^2$ at time~$t_{n+1}$.
Then~\eqref{eq:scheme_X} becomes
\begin{align}\label{eq:SI_scheme_X} \begin{split} 
X^{n+1}_j & = \kappa_j^n X^{n}_j - \lambda [\Delta \mathcal{F}]_j^n \\
& \quad + (\beta^n)^2 \mu \bigl(\gamma_{j+1/2} (\mathcal{D}(X_{j+1}^{n+1}) - \mathcal{D}(X_{j}^{n+1}) ) - \gamma_{j-1/2} (\mathcal{D}(X_{j}^{n+1}) - \mathcal{D}(X_{j-1}^{n+1}))\bigr)\\
 & \quad + \lambda\delta_{j,0}\beta^nq_{\rm f}^nX_{\rm f}^n + \tau\gamma_j{R}(\bp_j^nX^{n}_j/c,\bS_j^n) 
 \end{split} 
\end{align}
For~$j=-1$ and $j= N+1$, many terms are zero and this formula is in fact explicit; see \eqref{eq:update_-1} and \eqref{eq:update_N+1}.
For $j=0, \ldots, N$, one has to solve a system of $(N+1) \times (N+1)$ nonlinear equations. 
For the further analysis, it is 
is useful  to rewrite \eqref{eq:scheme_X} as a two-step implicit-explicit scheme: 
\begin{enumerate} 
\item Given $\smash{X_j^n}$ for $j=-1, \dots, N+1$, calculate $\smash{\tilde{X}_j^{n+1}}$ from 
\begin{align}\label{eq:SI_scheme_Xa} \qquad  \quad
\tilde{X}^{n+1}_j & = 
\kappa_j^n X^{n}_j - \lambda [\Delta \mathcal{F}]_j^n + \lambda\delta_{j,0}\beta^nq_{\rm f}^nX_{\rm f}^n 
+ \tau\gamma_j{R}(\bp_j^nX^{n}_j/c,\bS_j^n), 
\quad j=-1, \dots, N+1.  
\end{align}
\item Let 
\begin{align} 
X_{-1}^{n+1} = \tilde{X}_{-1}^{n +1},   \quad X_{N+1}^{n+1} = \tilde{X}_{N+1}^{n +1}, 
\label{tstep} 
\end{align} 
 and compute $\smash{\boldsymbol{X}^{n+1} = (X_0^{n+1}, \dots, X_{N}^{n+1})^{\mathrm{T}}}$   
 by solving the nonlinear system of equations 
 \begin{align} \label{syst} \begin{split} 
\boldsymbol{X}^{n+1} 
+ (\beta^n)^2 \mu 
\boldsymbol{T} 
   \begin{pmatrix} \mathcal{D} ( X_0^{n+1} )\\ 
\mathcal{D} (X_1^{n+1}) \\
 \vdots  \\ 
 \mathcal{D} (X_N^{n+1} ) \end{pmatrix}
  =  \boldsymbol{\tilde{X}}^{n+1},  \quad \boldsymbol{T} \coloneqq \begin{bmatrix} 
 1  & -1 & & & \\ 
  -1 & 2 & -1 & & \\ 
  & \ddots & \ddots & \ddots & \\
 & & -1 & 2 & -1 \\
 & & & -1 & 1 
\end{bmatrix} 
\end{split} 
\end{align}  
where $\smash{\boldsymbol{\tilde{X}}^{n+1}\coloneqq(\tilde{X}_0^{n+1}, \ldots, \tilde{X}_N^{n+1})^\rmT}$. 
\end{enumerate}  

In what follows, we assume that  the CFL condition for the semi-implicit scheme 
\begin{align} \label{CFLSI} \tag{CFL-SI}   \begin{split} 
\tau\biggl( & \zeta{M_{q1}} + \max\{M_R, M_{\bC}, M_{\bS}\} + \frac{2}{\Delta \xi} \max\biggl\{ C_1, \frac{{\zeta\rho_X}+C_1\hat{X}}{\rho_X-\hat{X}} \biggr\} \biggr) \leq 1  \end{split} 
\end{align} 
    is in effect. Notice 
     that \eqref{CFLSI} arises from \eqref{eq:CFL} from the fact that $a \equiv 0$ and hence $C_2=0$, and therefore 
      stipulates a bound on $\tau / \Delta \xi$, but not on $\tau / \Delta \xi^2$. 
      
     To ensure that the system \eqref{syst} has a unique solution at all,  we follow 
 a strategy similar to that of  \cite{Burger&C&S2006}:  we first  assume that \eqref{syst}
   has a solution and show that  the scheme 
  is monotone and satisfies an invariant region property. We then invoke a topological degree argument to show that \eqref{syst} 
   indeed has a solution, and  show that it depends Lipschitz continuously on the solution at the previous 
    time step. As a consequence, the whole scheme \eqref{eq:SI_scheme_Xa}--\eqref{syst} is well defined.

\begin{lemma} \label{lem:querpu} Assume that $\tau$ and $\Delta \xi$ satisfy \eqref{CFLSI}. Then the 
 scheme \eqref{eq:SI_scheme_Xa}--\eqref{syst} is monotone, i.e.\ there exist  functions $\smash{\mathcal{K}_j^n}$, 
  $j=-1, \dots, N+1$, such that 
  \begin{align*} 
   X_j^{n+1} = \mathcal{K}_{j}^n \bigl( X_{-1}^n, X_0^n, X_1^n, \dots, X_{N}^n, X_{N+1}^n , t^n\bigr), \quad j= -1, \dots, N+1, 
\end{align*} 
that are monotone in each $X$-argument. 
\end{lemma}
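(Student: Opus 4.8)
The plan is to exploit the two-step structure \eqref{eq:SI_scheme_Xa}--\eqref{syst}: I would show that the explicit predictor (computing $\tilde{X}^{n+1}$) and the implicit corrector (solving \eqref{syst}) are each order-preserving, and then use that a composition of two nondecreasing maps is nondecreasing. The boundary cells $j=-1$ and $j=N+1$ are updated by the purely algebraic explicit formulas \eqref{eq:update_-1} and \eqref{eq:update_N+1} (or set to zero), so their monotonicity is immediate and, because the corner entries $1$ in $\boldsymbol{T}$ encode the no-flux boundary, they do not enter the interior solve. It therefore suffices to treat $j=0,\dots,N$.

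First I would establish monotonicity of the predictor \eqref{eq:SI_scheme_Xa}. This is exactly the computation of Lemma~\ref{lemma:monotoneX} with the diffusive contribution $[\Delta\mathcal{J}]_j^n$ deleted, since that term has been moved to the implicit step. Differentiating and invoking Lemma~\ref{lemma:Engquist} gives the off-diagonal bounds
\[
\frac{\partial \tilde{X}_j^{n+1}}{\partial X_{j-1}^n}=\lambda\tilde{q}_{j-1/2}^{n,+}+\lambda\beta^n\frac{\partial\mathcal{E}_{j-1/2}}{\partial X_{j-1}^n}\ge0,\qquad
\frac{\partial \tilde{X}_j^{n+1}}{\partial X_{j+1}^n}=-\lambda\tilde{q}_{j+1/2}^{n,-}-\lambda\beta^n\frac{\partial\mathcal{E}_{j+1/2}}{\partial X_{j+1}^n}\ge0,
\]
while Lemmas~\ref{lemma:Engquist} and~\ref{lemma:Rbounds} yield the diagonal bound
\[
\frac{\partial \tilde{X}_j^{n+1}}{\partial X_j^n}\ge 1-\tau\zeta M_{q1}-\lambda\zeta M_{q2}-\lambda\zeta\|f'\|-\tau M_R\ge0 .
\]
The decisive point is that \eqref{CFLSI} already suffices here (indeed with room to spare, the slack being absorbed later by the $\bp$- and $\bS$-updates): the term $C_2/\Delta\xi=\zeta^2\|a\|/\Delta\xi$ that forced the $\tau/\Delta\xi^2$ bound in \eqref{eq:CFL} is simply absent, leaving only a bound on $\tau/\Delta\xi$. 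This is precisely the origin of the more favourable CFL condition.

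The crux of the argument, and the step I expect to be the main obstacle, is the monotonicity of the implicit corrector $\boldsymbol{\tilde{X}}^{n+1}\mapsto\boldsymbol{X}^{n+1}$ defined by \eqref{syst}. Writing $G(\boldsymbol{X}):=\boldsymbol{X}+(\beta^n)^2\mu\,\boldsymbol{T}\bigl(\mathcal{D}(X_0),\dots,\mathcal{D}(X_N)\bigr)^{\rmT}$, the corrector is $\boldsymbol{X}^{n+1}=G^{-1}(\boldsymbol{\tilde{X}}^{n+1})$ with Jacobian $\boldsymbol{J}=\boldsymbol{I}+(\beta^n)^2\mu\,\boldsymbol{T}\diag\bigl(a(X_0^{n+1}),\dots,a(X_N^{n+1})\bigr)$. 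Since $\boldsymbol{T}$ has nonpositive off-diagonal entries and $a=\mathcal{D}'\ge0$, $\boldsymbol{J}$ is a Z-matrix. I would then prove that every eigenvalue of $\boldsymbol{T}\diag(a_k)$ is real and nonnegative: $\boldsymbol{T}$ is symmetric positive semidefinite (the discrete Neumann Laplacian, with the constant vector in its kernel), so $\boldsymbol{T}\diag(a_k)$ shares its nonzero spectrum with the symmetric positive semidefinite matrix $\diag(a_k)^{1/2}\boldsymbol{T}\diag(a_k)^{1/2}$. Hence the eigenvalues of $\boldsymbol{J}$ are $\ge1$, so $\boldsymbol{J}$ is a nonsingular M-matrix and $\boldsymbol{J}^{-1}\ge0$ entrywise, and this holds for \emph{every} $\mu>0$, with no CFL restriction. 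The delicate feature is the strongly degenerate regime, where some $a(X_k^{n+1})$ vanish and $\diag(a_k)$ is singular; there I would justify the square-root similarity by the perturbation $a_k\mapsto a_k+\varepsilon$ and continuity of eigenvalues, or equivalently pass to $\diag(a_k)\boldsymbol{T}$ and use that a matrix and its transpose share their spectrum.

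Finally I would upgrade this to a global statement. Because $\boldsymbol{J}$ is nonsingular everywhere, once solvability of \eqref{syst} for the relevant right-hand sides is available (assumed here and established later by the topological-degree argument), the implicit function theorem makes $G^{-1}$ locally $C^1$ with Jacobian $\boldsymbol{J}^{-1}\ge0$; integrating $\partial\boldsymbol{X}^{n+1}/\partial\boldsymbol{\tilde{X}}^{n+1}=\boldsymbol{J}^{-1}$ along the segment joining two data vectors then gives $\partial X_j^{n+1}/\partial\tilde{X}_i^{n+1}\ge0$ globally, so the corrector is order-preserving (the everywhere-nonsingular $\boldsymbol{J}$ also forces uniqueness of the solve). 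Composing the monotone predictor with the monotone corrector produces the claimed functions $\mathcal{K}_j^n$, nondecreasing in each $X_k^n$.
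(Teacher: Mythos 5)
Your proposal is correct and follows essentially the same route as the paper: the same predictor--corrector decomposition, the predictor handled by repeating the monotonicity computation of Lemma~\ref{lemma:monotoneX} with the diffusive term deleted under \eqref{CFLSI}, and the corrector handled by differentiating the implicit relation $\boldsymbol{\mathcal{J}}(\boldsymbol{X}^{n+1})\,\partial\boldsymbol{X}^{n+1}/\partial X_k^n=\partial\boldsymbol{\tilde{X}}^{n}/\partial X_k^n$ and showing that the Jacobian $\boldsymbol{I}+(\beta^n)^2\mu\,\boldsymbol{T}\diag(a(X_0^{n+1}),\dots,a(X_N^{n+1}))$ is a nonsingular M-matrix with nonnegative inverse, valid for every $\mu$. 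The only local difference is the M-matrix verification: the paper simply notes that the transpose is a strictly diagonally dominant L-matrix (each column of the Jacobian sums to $1$), which is more elementary than your spectral argument via the nonzero-spectrum identity for $\boldsymbol{T}\diag(a_k)$ and $\diag(a_k)^{1/2}\boldsymbol{T}\diag(a_k)^{1/2}$ with a perturbation for the degenerate case--both are valid, and your added integration-along-a-segment step makes the global order preservation of the corrector slightly more explicit than the paper's infinitesimal statement.
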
 

\begin{proof} By repeating the monotonicity part of the proof of Lemma~\ref{lemma:monotoneX} we see that under 
 the condition \eqref{CFLSI}, the scheme \eqref{eq:SI_scheme_Xa} is monotone, i.e.,  i.e., $\partial \tilde{X}^{n+1}_j / \partial X^n_k \geq 0$ for 
 all $-1 \leq j,k \leq N+1$, so by \eqref{tstep} the statement immediately holds for $j=-1$ and $j=N+1$. 
In what follows, we define for a vector $\boldsymbol{X}=(X_0, \dots, X_N)^{\mathrm{T}}$ the Jacobian matrix of the left-hand side of~\eqref{syst}, namely 
\begin{align} \label{jacob-def} 
\boldsymbol{\mathcal{J}}( \boldsymbol{X}) \coloneqq 
 \boldsymbol{I}_{N+1} + (\beta^n)^2 \mu  \boldsymbol{T} 
  \diag \bigl( a(X_0), \dots, a(X_N) \bigr). 
\end{align} 
We wish to show that 
\begin{align*} 
 \frac{\partial X_j^{n+1}}{\partial X_k^n} \geq 0 \quad \text{for all $1 \leq k,j \leq n$}. 
 \end{align*} 
 To this end we introduce for $k=0, \dots, N$  the vectors
 \begin{align*} 
 \frac{\partial \boldsymbol{X}^{n+1}}{\partial X_k^n} := 
  \biggl( \frac{\partial X_0^{n+1}}{\partial X_k^n} , \dots, \frac{\partial X_N^{n+1}}{\partial X_k^n}\biggr)^{\mathrm{T}}, 
   \quad  \frac{\partial \boldsymbol{\tilde{X}}^{n}}{\partial X_k^n} := 
  \biggl( \frac{\partial \tilde{X}_0^{n}}{\partial X_k^n} , \dots, \frac{\partial \tilde{X}_N^{n}}{\partial X_k^n}\biggr)^{\mathrm{T}}
 \end{align*}  
Assume that for given~$\boldsymbol{X}^n$, the vector $\boldsymbol{X}^{n+1}$ is a solution to \eqref{eq:SI_scheme_X}. 
 Then  
 \begin{align*} 
 \boldsymbol{\mathcal{J}}  ( \boldsymbol{X}^{n+1} ) \frac{\partial \boldsymbol{X}^{n+1}} {\partial X_k^n} =  
  \frac{\partial \boldsymbol{\tilde{X}}^{n}}{\partial X_k^n}, 
  \quad k=0, \dots, N. 
 \end{align*} 
We already know that $\smash{\partial \boldsymbol{\tilde{X}}^{n} / \partial X_k^n \geq \boldsymbol{0}}$. On the other hand, 
for any~$\boldsymbol{X}$, the matrix $\smash{(\boldsymbol{\mathcal{J}}( \boldsymbol{X}))^{\mathrm{T}}}$ is a strictly diagonally 
dominant L-matrix and therefore an M-matrix; in particular,  $\smash{(\boldsymbol{\mathcal{J}}( \boldsymbol{X}))^{\mathrm{T}}}$ has 
a non-negative inverse, and therefore also $\smash{(\boldsymbol{\mathcal{J}} ( \boldsymbol{X}))^{-1}}$ is non-negative, hence 
\begin{align*} 
 \frac{\partial \boldsymbol{X}^{n+1}} {\partial X_k^n} =  \bigl(  \boldsymbol{\mathcal{J}}  ( \boldsymbol{X}^{n+1} ) \bigr)^{-1} 
  \frac{\partial \boldsymbol{\tilde{X}}^{n}}{\partial X_k^n} \geq \boldsymbol{0}, 
  \quad k=0, \dots, N. 
 \end{align*} 
\end{proof} 

\begin{lemma}\label{lemma:monotoneXSI}
If $\smash{\boldsymbol{\mathcal{U}}_j^n\in\Omega}$ for
all~$j$ and \eqref{CFLSI} is in effect, then 
\begin{align}  \label{XboundSI} 
0\le X_j^{n+1}\le \hat{X} \quad \text{\em for all~$j=-1, \dots, N+1$.} 
\end{align} 
\end{lemma}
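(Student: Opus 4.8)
The plan is to exploit the two-step structure \eqref{eq:SI_scheme_Xa}--\eqref{syst}: first bound the intermediate quantity $\tilde{X}_j^{n+1}$, and then show that the implicit diffusion solve in \eqref{syst} cannot destroy those bounds. Throughout I assume, as announced before the lemma, that a solution $\boldsymbol{X}^{n+1}=(X_0^{n+1},\dots,X_N^{n+1})^{\mathrm{T}}$ of \eqref{syst} exists; together with \eqref{tstep} this defines all $X_j^{n+1}$, $j=-1,\dots,N+1$. The boundary cells $j=-1$ and $j=N+1$ are then disposed of immediately, since by \eqref{tstep} they satisfy $X_j^{n+1}=\tilde{X}_j^{n+1}$, so \eqref{XboundSI} for them follows from the bound on $\tilde{X}$ established in the next step.

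First I would show that $0\le\tilde{X}_j^{n+1}\le\hat{X}$ for every $j=-1,\dots,N+1$. The map $\tilde{X}_j^{n+1}=\tilde{\mathcal{H}}(X_{j-1}^n,X_j^n,X_{j+1}^n)$ defined by \eqref{eq:SI_scheme_Xa} is exactly the explicit update of Lemma~\ref{lemma:monotoneX} with the diffusion contribution $\lambda[\Delta\mathcal{J}]_j^n$ removed. Since \eqref{CFLSI} is \eqref{eq:CFL} with $C_2=0$, repeating verbatim the monotonicity computation of Lemma~\ref{lemma:monotoneX} (now without the $\mu$-terms, which only relaxes the estimates) shows that $\tilde{\mathcal{H}}$ is nondecreasing in each argument. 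Evaluating at the constant states then gives $\tilde{\mathcal{H}}(0,0,0)\ge 0$ and $\tilde{\mathcal{H}}(\hat{X},\hat{X},\hat{X})\le\hat{X}$; here the diffusion difference is absent anyway, the reaction term vanishes at $X=\hat{X}$ by \eqref{eq:techRC}, and the feed case $j=0$ uses $X_\mathrm{f}^n\le\hat{X}$ exactly as in Lemma~\ref{lemma:monotoneX}. Hence $0\le\tilde{X}_j^{n+1}\le\hat{X}$ for all $j$.

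The heart of the proof is to propagate these bounds through the nonlinear system \eqref{syst} by a discrete maximum principle, using that $\mathcal{D}$ is nondecreasing (recall $a=\mathcal{D}'\ge 0$) and that $\boldsymbol{T}$ is the discrete Neumann Laplacian. For the lower bound, let $j_0\in\{0,\dots,N\}$ be an index at which $\boldsymbol{X}^{n+1}$ attains its minimum and read off the $j_0$-th equation of \eqref{syst}, namely $X_{j_0}^{n+1}+(\beta^n)^2\mu\,[\boldsymbol{T}\mathcal{D}(\boldsymbol{X}^{n+1})]_{j_0}=\tilde{X}_{j_0}^{n+1}$. Each nonzero off-diagonal in the $j_0$-th row of $\boldsymbol{T}$ contributes a term $\mathcal{D}(X_{j_0}^{n+1})-\mathcal{D}(X_{j_0\pm1}^{n+1})$, which is $\le 0$ because $X_{j_0}^{n+1}$ is minimal and $\mathcal{D}$ is nondecreasing (the boundary rows $j_0=0$ and $j_0=N$ contribute a single such difference, consistent with no-flux at $\xi=0,1$). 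Thus $[\boldsymbol{T}\mathcal{D}(\boldsymbol{X}^{n+1})]_{j_0}\le 0$, whence $\tilde{X}_{j_0}^{n+1}\le X_{j_0}^{n+1}$; were $X_{j_0}^{n+1}<0$, this would force $\tilde{X}_{j_0}^{n+1}<0$, contradicting Step~1. The upper bound is symmetric: at a maximizing index $j_1$ all differences $\mathcal{D}(X_{j_1}^{n+1})-\mathcal{D}(X_{j_1\pm1}^{n+1})\ge 0$, so $\tilde{X}_{j_1}^{n+1}\ge X_{j_1}^{n+1}$, and $X_{j_1}^{n+1}>\hat{X}$ would contradict $\tilde{X}_{j_1}^{n+1}\le\hat{X}$.

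I expect the discrete maximum principle of Step~2 to be the only genuinely new point, the explicit Step~1 being a routine rerun of Lemma~\ref{lemma:monotoneX}. The mild subtlety there is to check that the sign structure of $\boldsymbol{T}$ in its first and last rows really matches the vanishing of $\gamma_{-1/2}$ and $\gamma_{N+1/2}$ (i.e.\ that diffusion is switched off across the top and bottom interfaces), so that the extremum argument applies uniformly to interior and boundary cells; once this is in place, the monotonicity of $\mathcal{D}$ makes the sign of $[\boldsymbol{T}\mathcal{D}(\boldsymbol{X}^{n+1})]_j$ at an extremum immediate. Note that this step needs no CFL restriction at all, which is precisely why the implicit treatment of diffusion removes the $\tau/\Delta\xi^2$ bound from \eqref{CFLSI}.
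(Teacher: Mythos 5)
Your proposal is correct, and Step~1 coincides with the paper's: the paper likewise begins by ``repeating the second part of the proof of Lemma~\ref{lemma:monotoneX} under the assumption $a\equiv 0$'' to get $0\le\tilde{X}_j^{n+1}\le\hat{X}$ under \eqref{CFLSI}, and disposes of $j=-1,N+1$ via \eqref{tstep} exactly as you do. Where you genuinely diverge is Step~2. The paper does not argue at extremal indices; it rewrites \eqref{syst} as $\boldsymbol{M}(\boldsymbol{X}^{n+1})\boldsymbol{X}^{n+1}=\boldsymbol{\tilde{X}}^{n+1}$ using the divided differences $\varpi_{j+1/2}^{n+1}=(\mathcal{D}(X_{j+1}^{n+1})-\mathcal{D}(X_j^{n+1}))/(X_{j+1}^{n+1}-X_j^{n+1})\ge 0$, observes that $\boldsymbol{M}$ is a strictly diagonally dominant L-matrix and hence an M-matrix with $\boldsymbol{M}^{-1}\ge\boldsymbol{0}$, and additionally that $\boldsymbol{M}\boldsymbol{1}=\boldsymbol{1}$, so $\boldsymbol{M}^{-1}\boldsymbol{1}=\boldsymbol{1}$; thus each $X_j^{n+1}$ is a combination of the $\tilde{X}_k^{n+1}$ with nonnegative weights summing to one, which gives both bounds at once. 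Your discrete maximum principle at a minimizing/maximizing index, using only that $\mathcal{D}$ is nondecreasing so that $[\boldsymbol{T}\mathcal{D}(\boldsymbol{X}^{n+1})]_{j_0}\le 0$ (resp.\ $\ge 0$) there, is a valid and more elementary substitute: it avoids M-matrix theory entirely, and your sign check of the Neumann-type first and last rows of $\boldsymbol{T}$ against $\gamma_{-1/2}=\gamma_{N+1/2}=0$ is exactly the point that makes the argument uniform over interior and boundary cells. What the paper's heavier machinery buys is the explicit inverse-positive, row-stochastic representation of the solve, which is structurally the same tool used in Lemma~\ref{lem:querpu} (Jacobian as M-matrix) and again in the implicit $\boldsymbol{p}$- and $\boldsymbol{S}$-updates of Section~\ref{subsec:updperc}, so it unifies the analysis; your route proves this one lemma more cheaply but does not yield that reusable representation. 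Both proofs, like the paper's, presuppose existence of a solution to \eqref{syst} at this stage, which is consistent with the paper's announced strategy (existence is established afterwards by the degree argument, using this lemma as the a priori bound), and your closing observation that the implicit step needs no CFL restriction matches the paper's logic for why \eqref{CFLSI} only constrains $\tau/\Delta\xi$.
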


\begin{proof} Repeating the second part of the proof of Lemma~\ref{lemma:monotoneX} under the assumption $a \equiv 0$ 
 we see that under 
 the condition \eqref{CFLSI},
 \begin{align} \label{tdxbounds} 
  0 \leq \tilde{X}_j^{n+1} \leq \hat{X} \quad \text{for  all~$j=-1, \dots, N+1$.}
  \end{align} 
  This directly proves \eqref{XboundSI} for $j=-1$ and $j=N+1$. Furthermore, we define 
  \begin{align*} 
    \varpi_{j+1/2}^n \coloneqq
    \begin{cases} 
     (\mathcal{D} (X_{j+1}^{n}) - \mathcal{D} (X_j^{n}))/(X_{j+1}^{n} - X_{j}^{n}) 
      & \text{if $X_{j+1}^{n} \neq X_{j}^{n}$,}  \\
      0 & \text{otherwise,} 
  \end{cases} \quad j=0, \dots, N-1, 
  \end{align*} 
  and write the nonlinear scheme \eqref{syst} as $\smash{\boldsymbol{M} (\boldsymbol{X}^{n+1}) \boldsymbol{X}^{n+1} 
   = \boldsymbol{\tilde{X}}^{n+1}}$, where the entries $(m_{ij})_{0 \leq i,j \leq N}$ of the tridiagonal matrix 
   $\boldsymbol{M} =\boldsymbol{M} (\boldsymbol{X}^{n+1})$ are given by 
  \begin{align*} 
  m_{j,j-1} & = - (\beta^n)^2 \mu \varpi_{j-1/2}^{n+1}, \quad j=1, \dots, N, \\
  m_{jj} & = 1 + (\beta^n)^2 \mu \bigl( \varpi_{j-1/2}^{n+1} + \varpi_{j+1/2}^{n+1} \bigr), \quad j=1, \dots, N-1, \\
  m_{j,j+1} & = - (\beta^n)^2 \mu \varpi_{j+1/2}^{n+1}, \quad j=0, \dots, N-1, \\
  m_{00} & = 1 + (\beta^n)^2 \mu \varpi_{1/2}^{n+1}, \quad  m_{NN}  = 1 + (\beta^n)^2 \mu \varpi_{N-1/2}^{n+1}.  
  \end{align*} 
  Since $\smash{\varpi_{j+1/2}^{n+1} \geq 0}$ for all~$j$,  
  $\boldsymbol{M}$ is  a strictly diagonally dominant L-matrix and therefore an M-matrix, that is 
$\boldsymbol{M}^{-1}$ exists and $\boldsymbol{M}^{-1} \geq \boldsymbol{0}$, i.e.,  
 if we write $\boldsymbol{M}^{-1} = (\bar{m}_{jk})_{0 \leq j,k \leq N}$, then $\bar{m}_{jk} \geq 0$. Since 
 \begin{align} \label{xjboundprel} 
 X_{j}^{n+1} = \bar{m}_{j,0} \tilde{X}_0^{n+1} + \bar{m}_{j,1} \tilde{X}_1^{n+1} +  \dots +  \bar{m}_{j,N} \tilde{X}_N^{n+1}, 
 \end{align} 
 this property implies that $\smash{ X_{j}^{n+1} \geq 0}$. 
 On the other hand, since
 \begin{align*} 
  m_{jj} - \sum_{k =0 \atop k \neq j}^{N }|  m_{jk} | \geq 1  \quad \text{and} \quad 
 \sum_{k =0}^{N } m_{jk} = 1  \quad \text{for all $j=0, \dots, N$,} 
  \end{align*} 
we have $\boldsymbol{M}\boldsymbol{1}=\boldsymbol{1}$ with $\boldsymbol{1}:=(1, \dots, 1)^{\mathrm{T}}$, hence $\boldsymbol{M}^{-1}\boldsymbol{1}=\boldsymbol{1}$, 
that is $\bar{m}_{j,0}  + \bar{m}_{j,1} + \dots + \bar{m}_{j,N} = 1$.
 In view of the  upper bound in~\eqref{tdxbounds} we then deduce from \eqref{xjboundprel} that 
    $\smash{X_j^{n+1} \leq \hat{X}}$ for $j=0, \dots, N$.  
\end{proof} 

In what follows, we define $\boldsymbol{\mathcal{X}} \coloneqq (X_{-1}, X_0,  X_1, \dots , X_N, X_{N+1})^{\mathrm{T}} \in \mathbb{R}^{N+3}$.   The following lemma and its proof closely follow \cite[Lemma~3.3, part~(a)]{Burger&C&S2006}. 

\begin{lemma} Assume that the condition \eqref{CFLSI} is in effect and that $\boldsymbol{\mathcal{X}}^n \in [0, \hat{X}]^{N+3}$. Then the scheme \eqref{eq:SI_scheme_X}, or  
equivalently, \eqref{eq:SI_scheme_Xa}--\eqref{syst}, admits a solution $\boldsymbol{\mathcal{X}}^{n+1} \in [0, \hat{X}]^{N+3}$. 
\end{lemma}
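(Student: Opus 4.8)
\emph{The plan} is to reduce the statement to the solvability of the interior nonlinear system~\eqref{syst} and to prove existence by a Brouwer degree (homotopy) argument, closely following \cite[Lemma~3.3]{Burger&C&S2006}. The two boundary cells require no argument: by~\eqref{tstep}, $X_{-1}^{n+1}$ and $X_{N+1}^{n+1}$ are computed explicitly through~\eqref{eq:SI_scheme_Xa}, and the bound~\eqref{tdxbounds} obtained in the proof of Lemma~\ref{lemma:monotoneXSI} already places them in $[0,\hat{X}]$. Hence it suffices to produce a solution $\boldsymbol{X}^{n+1}=(X_0^{n+1},\dots,X_N^{n+1})^{\mathrm{T}}\in[0,\hat{X}]^{N+1}$ of~\eqref{syst}.

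I would set $\boldsymbol{\mathcal{D}}(\boldsymbol{X}):=(\mathcal{D}(X_0),\dots,\mathcal{D}(X_N))^{\mathrm{T}}$ and define
\[
\boldsymbol{G}(\boldsymbol{X}):=\boldsymbol{X}+(\beta^n)^2\mu\,\boldsymbol{T}\boldsymbol{\mathcal{D}}(\boldsymbol{X})-\boldsymbol{\tilde{X}}^{n+1},
\]
so that the zeros of $\boldsymbol{G}$ are exactly the solutions of~\eqref{syst}. Since $\mathcal{D}$ is continuous (it is the integral of $a$), $\boldsymbol{G}$ is continuous on $\mathbb{R}^{N+1}$. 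It is tempting to recast~\eqref{syst} as the fixed-point problem $\boldsymbol{X}=\boldsymbol{M}(\boldsymbol{X})^{-1}\boldsymbol{\tilde{X}}^{n+1}$ with the matrix $\boldsymbol{M}(\boldsymbol{X})$ of Lemma~\ref{lemma:monotoneXSI}, but the entries $\varpi_{j+1/2}^{n+1}$, and hence $\boldsymbol{M}$, fail to be continuous where consecutive components of $\boldsymbol{X}$ coincide; the product $\boldsymbol{M}(\boldsymbol{X})\boldsymbol{X}$, however, equals the continuous map $\boldsymbol{G}(\boldsymbol{X})+\boldsymbol{\tilde{X}}^{n+1}$. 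I therefore work with $\boldsymbol{G}$ directly and consider the homotopy
\[
\boldsymbol{H}(\boldsymbol{X},s):=\boldsymbol{X}+s(\beta^n)^2\mu\,\boldsymbol{T}\boldsymbol{\mathcal{D}}(\boldsymbol{X})-\boldsymbol{\tilde{X}}^{n+1},\qquad s\in[0,1],
\]
joining the affine map $\boldsymbol{H}(\cdot,0)\colon\boldsymbol{X}\mapsto\boldsymbol{X}-\boldsymbol{\tilde{X}}^{n+1}$ to $\boldsymbol{H}(\cdot,1)=\boldsymbol{G}$.

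The key step is a uniform a priori bound. For fixed $s\in[0,1]$, any zero $\boldsymbol{X}$ of $\boldsymbol{H}(\cdot,s)$ solves $\boldsymbol{M}_s(\boldsymbol{X})\boldsymbol{X}=\boldsymbol{\tilde{X}}^{n+1}$, where $\boldsymbol{M}_s$ arises from $\boldsymbol{M}$ by replacing $(\beta^n)^2\mu$ with $s(\beta^n)^2\mu$. As $s\ge 0$ and $\varpi_{j+1/2}^{n+1}\ge 0$, the matrix $\boldsymbol{M}_s$ is again a strictly diagonally dominant L-matrix, hence an M-matrix with $\boldsymbol{M}_s^{-1}\ge\boldsymbol{0}$ and $\boldsymbol{M}_s\boldsymbol{1}=\boldsymbol{1}$, so the argument of Lemma~\ref{lemma:monotoneXSI} gives $0\le X_j\le\hat{X}$ for every zero, uniformly in $s$. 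Fixing any bounded open set $U$ with $[0,\hat{X}]^{N+1}\subset U$ (for instance $U=(-1,\hat{X}+1)^{N+1}$), no zero of $\boldsymbol{H}(\cdot,s)$ lies on $\partial U$ for any $s$, so $\deg(\boldsymbol{H}(\cdot,s),U,\boldsymbol{0})$ is well defined and constant in $s$. At $s=0$ the map is affine with Jacobian $\boldsymbol{I}_{N+1}$ and unique zero $\boldsymbol{\tilde{X}}^{n+1}\in U$ (by~\eqref{tdxbounds}), whence $\deg(\boldsymbol{H}(\cdot,0),U,\boldsymbol{0})=1$. Homotopy invariance then yields $\deg(\boldsymbol{G},U,\boldsymbol{0})=1\neq 0$, so $\boldsymbol{G}$ has a zero in $U$, which the a priori bound confines to $[0,\hat{X}]^{N+1}$. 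Combined with the explicit boundary values, this furnishes $\boldsymbol{\mathcal{X}}^{n+1}\in[0,\hat{X}]^{N+3}$.

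The main obstacle I anticipate is precisely the \emph{uniform} a priori bound: one must check that scaling the diffusion coefficient by $s\in[0,1]$ preserves the M-matrix structure, so that solutions of the deformed problem cannot escape $[0,\hat{X}]^{N+1}$ and therefore never reach $\partial U$; this is what legitimizes the homotopy invariance of the degree. A secondary subtlety, worth recording explicitly, is that the degree must be applied to the continuous map $\boldsymbol{G}$ and not to the inverse-matrix fixed-point map, since $\boldsymbol{M}(\boldsymbol{X})$ itself is discontinuous on the diagonal where neighbouring components agree.
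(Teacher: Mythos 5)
Your proof is correct and follows essentially the same route as the paper's: a Brouwer degree/homotopy argument in which the implicit diffusion contribution is scaled by a parameter in $[0,1]$, with the invariant-region bound from Lemma~\ref{lemma:monotoneXSI} (valid under \eqref{CFLSI}) serving as the uniform a priori estimate that keeps all zeros away from the boundary of the degree domain, and the degree normalized to $1$ at the parameter value zero where the map reduces to a translate of the identity with the unique zero $\boldsymbol{\tilde{X}}^{n+1}$. The only differences are presentational and in your favor: you work with the interior $(N+1)$-dimensional system rather than the full map on $\mathbb{R}^{N+3}$, and you verify explicitly that the M-matrix structure, and hence the bound $[0,\hat{X}]^{N+1}$, persists for every value of the homotopy parameter --- a point the paper leaves implicit in the phrase ``the same argument allows us to define'' the deformed degree --- while also recording correctly that the degree must be applied to the continuous map $\boldsymbol{G}$ and not to the fixed-point map built from the (discontinuous) matrix $\boldsymbol{M}(\boldsymbol{X})$.
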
 

\begin{proof} The existence of~$\boldsymbol{\mathcal{X}}^{n+1}$ follows by adopting an  argument used in\cite{eymard00} 
 to prove the existence of a solution  of implicit schemes for hyperbolic equations based on 
  topological degree theory \cite{deim85}. To this end, let us write the scheme in the form 
  \begin{align} \label{bcs063.10} 
   \boldsymbol{\mathcal{X}}^{n+1} - \boldsymbol{\mathcal{E}} (  \boldsymbol{\mathcal{X}}^{n+1}, t^n  ) = 
    \boldsymbol{\tilde{\mathcal{X}}}^{n+1} (  \boldsymbol{\mathcal{X}}^{n}, t^n ).  
    \end{align} 
    Here $\smash{ \boldsymbol{\tilde{\mathcal{X}}}^{n+1} ( \cdot , t^n ): \mathbb{R}^{N+3} \to \mathbb{R}^{N+3}}$ 
      is a continuous function  defined by  \eqref{eq:SI_scheme_Xa}, and 
      $\boldsymbol{\mathcal{E}}(\cdot, t^n) : \smash{\mathbb{R}^{N+3}} \to \smash{\mathbb{R}^{N+3}}$ is another continuous 
        function defined in an obvious way by \eqref{tstep}, \eqref{syst}.  By Lemma~\ref{lemma:monotoneXSI}, 
         if $ \boldsymbol{\mathcal{X}}^{n+1}$ satisfies \eqref{bcs063.10}, then 
         $\smash{\boldsymbol{\mathcal{X}}^{n+1} \in [0, \hat{X}]^{N+3}}$. On the other hand we know (and have used that) 
          if $a \equiv 0$ and hence $\mathcal{D} \equiv 0$, then the explicit scheme for the hyperbolic case 
            \begin{align} \label{bcs063.11} 
   \boldsymbol{\mathcal{X}}^{n+1}  = 
    \boldsymbol{\tilde{\mathcal{X}}}^{n+1} (  \boldsymbol{\mathcal{X}}^{n}, t^n ),   
    \end{align} 
     which corresponds just to the scheme~\eqref{eq:SI_scheme_Xa}, satisfies the same bound, i.e., 
      $\smash{\boldsymbol{\tilde{\mathcal{X}}}^{n+1} (  \boldsymbol{\mathcal{X}}^{n}, t^n )  \in [0, \hat{X}]^{N+3}}$. 
      Consequently, if $B_R\subset \mathbb{R}^{N+3}$ is a ball with center~$\boldsymbol{0}$ and sufficiently large radius~$R$, then 
       \eqref{bcs063.10} has no solution on the boundary of~$B_R$, and one can define the topological degree of the mapping 
        $\mathrm{Id} - \boldsymbol{\mathcal{E}}$ associated with the set~$B_R$ and the point~$\smash{\boldsymbol{\tilde{\mathcal{X}}}^{n+1} (  \boldsymbol{\mathcal{X}}^{n}, t^n )}$, that is, $\smash{\deg( \mathrm{Id} - \boldsymbol{\mathcal{E}}, B_R, \boldsymbol{\tilde{\mathcal{X}}}^{n+1} (  \boldsymbol{\mathcal{X}}^{n}, t^n ))}$.  Furthermore, if $\alpha \in [0,1]$, then the same argument allows us to 
         define  $\smash{\deg( \mathrm{Id} - \alpha \boldsymbol{\mathcal{E}}, B_R, \boldsymbol{\tilde{\mathcal{X}}}^{n+1} (  \boldsymbol{\mathcal{X}}^{n}, t^n ))}$. The property of invariance of degree under continuous transformations then asserts that 
         the latter quantity 
                  does not depend on~$\alpha$, hence 
        \begin{align*} 
           \deg \bigl( \mathrm{Id} - \boldsymbol{\mathcal{E}}, B_R, \boldsymbol{\tilde{\mathcal{X}}}^{n+1} (  \boldsymbol{\mathcal{X}}^{n}, t^n ) \bigr)
            =  \deg \bigl( \mathrm{Id} , B_R, \boldsymbol{\tilde{\mathcal{X}}}^{n+1} (  \boldsymbol{\mathcal{X}}^{n}, t^n ) \bigr) =1, 
             \end{align*} 
              where the equality for $\alpha =0$ holds since we can solve the scheme  \eqref{bcs063.11} in a unique way. 
               This proves that \eqref{bcs063.10} has a solution in~$B_R$, where we already have proved that the solution 
                belongs to~$\smash{[0, \hat{X}]^{N+3}}$. 
               \end{proof} 

\begin{lemma}  Assume that the condition \eqref{CFLSI} is in effect and that $\boldsymbol{\mathcal{X}}^n , \boldsymbol{\mathcal{Y}}^n \in [0, \hat{X}]^{N+3}$, and that~$\boldsymbol{\mathcal{X}}^{n+1}$ and~$\boldsymbol{\mathcal{Y}}^{n+1}$ are both computed by 
 scheme \eqref{eq:SI_scheme_X}, or  
equivalently, \eqref{eq:SI_scheme_Xa}--\eqref{syst}. Then there exists a constant~$C>0$ such  that 
\begin{align}  \label{lemm4.4ineq} 
 \| \boldsymbol{\mathcal{X}}^{n+1} - \boldsymbol{\mathcal{Y}}^{n+1}  \|_1 \leq 
 (1+ C\Delta t)  \| \boldsymbol{\mathcal{X}}^{n} - \boldsymbol{\mathcal{Y}}^{n} \|_1. 
 \end{align}  
 This means that the solution of  \eqref{eq:SI_scheme_Xa}--\eqref{syst} depends  Lipschitz continuously on~$\boldsymbol{\mathcal{X}}^{n}$, and 
  in  particular, setting $ \boldsymbol{\mathcal{X}}^{n}  =  \boldsymbol{\mathcal{Y}}^{n}$, we obtain uniqueness.    
\end{lemma}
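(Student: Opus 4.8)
The plan is to split the one-step map $\boldsymbol{\mathcal{X}}^n \mapsto \boldsymbol{\mathcal{X}}^{n+1}$ into the explicit predictor \eqref{eq:SI_scheme_Xa}, written $\boldsymbol{\mathcal{X}}^n \mapsto \boldsymbol{\tilde{\mathcal{X}}}^{n+1}$, followed by the implicit corrector \eqref{tstep}--\eqref{syst}, which fixes the two boundary-layer values and maps the interior vector $\boldsymbol{\tilde{X}}^{n+1}$ to $\boldsymbol{X}^{n+1}$. I would show that the corrector is an $L^1$-nonexpansion and that the predictor is $L^1$-stable with factor $1+C\tau$, and then compose the two estimates (recall $\Delta t=\tau$). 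Note that the corrector estimate needs no smallness assumption, whereas \eqref{CFLSI} enters only through the predictor.

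\emph{Corrector.} Let $\boldsymbol{X}^{n+1},\boldsymbol{Y}^{n+1}$ solve \eqref{syst} for right-hand sides $\boldsymbol{\tilde{X}}^{n+1},\boldsymbol{\tilde{Y}}^{n+1}$ and set $E_j\coloneqq X_j^{n+1}-Y_j^{n+1}$, $d_j\coloneqq\mathcal{D}(X_j^{n+1})-\mathcal{D}(Y_j^{n+1})$ and $\tilde{E}_j\coloneqq\tilde{X}_j^{n+1}-\tilde{Y}_j^{n+1}$. Subtracting the two copies of \eqref{syst} gives $E_j+(\beta^n)^2\mu(\boldsymbol{T}\boldsymbol{d})_j=\tilde{E}_j$. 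I would multiply by $\sgn(E_j)$, sum over $j=0,\dots,N$ and bound the right-hand side by $\sum_j|\tilde{E}_j|$, so that it remains to prove $\sum_j\sgn(E_j)(\boldsymbol{T}\boldsymbol{d})_j\ge0$. Writing $\boldsymbol{T}$ in discrete-flux form with zero flux at the two ends (the $1,-1$ rows), a summation by parts recasts this sum as $\sum_{j=0}^{N-1}(\sgn(E_{j+1})-\sgn(E_j))(d_{j+1}-d_j)$. Since $\mathcal{D}'=a\ge0$, each increment $d_j$ has the same sign as $E_j$ (and vanishes where $\mathcal{D}$ is flat), so every summand is nonnegative. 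Hence $\sum_{j=0}^N|E_j|\le\sum_{j=0}^N|\tilde{E}_j|$; adding the untouched boundary-layer differences $|\tilde{E}_{-1}|+|\tilde{E}_{N+1}|$ from \eqref{tstep} yields $\|\boldsymbol{\mathcal{X}}^{n+1}-\boldsymbol{\mathcal{Y}}^{n+1}\|_1\le\|\boldsymbol{\tilde{\mathcal{X}}}^{n+1}-\boldsymbol{\tilde{\mathcal{Y}}}^{n+1}\|_1$.

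\emph{Predictor.} For the map \eqref{eq:SI_scheme_Xa} I would reuse the monotonicity already established under \eqref{CFLSI} (the first part of Lemma~\ref{lem:querpu}) together with a Crandall--Tartar argument. With $\boldsymbol{V}\coloneqq\boldsymbol{\mathcal{X}}^n\vee\boldsymbol{\mathcal{Y}}^n$ and $\boldsymbol{W}\coloneqq\boldsymbol{\mathcal{X}}^n\wedge\boldsymbol{\mathcal{Y}}^n$, monotonicity gives the pointwise bound $|\tilde{X}_j^{n+1}(\boldsymbol{\mathcal{X}}^n)-\tilde{X}_j^{n+1}(\boldsymbol{\mathcal{Y}}^n)|\le\tilde{X}_j^{n+1}(\boldsymbol{V})-\tilde{X}_j^{n+1}(\boldsymbol{W})$. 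Summing over $j=-1,\dots,N+1$, the flux differences $[\Delta\mathcal{F}]_j^n$ telescope to outflow boundary terms of the correct (nonpositive) sign, the data-independent feed source cancels, the factors $\kappa_j^n=1+O(\tau)$ contribute the factor $1+C\tau$, and the reaction term is Lipschitz in $X_j^n$ with constant $M_R$ by \eqref{lemm3.2e}. Since $\sum_j(V_j-W_j)=\|\boldsymbol{\mathcal{X}}^n-\boldsymbol{\mathcal{Y}}^n\|_1$, this gives $\|\boldsymbol{\tilde{\mathcal{X}}}^{n+1}-\boldsymbol{\tilde{\mathcal{Y}}}^{n+1}\|_1\le(1+C\tau)\|\boldsymbol{\mathcal{X}}^n-\boldsymbol{\mathcal{Y}}^n\|_1$, with $C$ built from $\zeta M_{q1}$ and $M_R$.

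Composing the two estimates proves \eqref{lemm4.4ineq}, and the choice $\boldsymbol{\mathcal{X}}^n=\boldsymbol{\mathcal{Y}}^n$ forces $\boldsymbol{\mathcal{X}}^{n+1}=\boldsymbol{\mathcal{Y}}^{n+1}$, i.e.\ uniqueness of the implicit solution. The main obstacle is the corrector: one must justify that, after summation by parts, the multiplier $\sgn(E_j)$ can be paired with the diffusion increments $d_{j+1}-d_j$ so that the degenerate (possibly vanishing) $\mathcal{D}$ cooperates with the graph-Laplacian $\boldsymbol{T}$, and one must check that the two end rows of $\boldsymbol{T}$ produce no boundary contribution---precisely the zero-flux structure used above. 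The remaining bookkeeping (telescoping of the $\mathcal{F}$-fluxes, the bound $\kappa_j^n\le1+C\tau$, and the Lipschitz reaction constant) is routine.
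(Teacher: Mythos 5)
Your proof is correct, but it takes a genuinely different and more modular route than the paper's. The paper treats the semi-implicit step as one coupled system: it writes difference equations for $\vartheta_j=Y_j-X_j$ in which the implicit diffusion enters through secant-slope coefficients $\theta_j^{n+1}\geq 0$, decomposes the convective flux differences via mean-value coefficients $\eta_{j+1/2}^n,\nu_{j+1/2}^n\geq 0$ built from the upwind and Engquist--Osher structure, uses \eqref{CFLSI} to make every explicit coefficient nonnegative, passes to absolute values, and sums so that the implicit $\theta$-terms cancel telescopically --- essentially the same arithmetic your two steps perform, but fused into a single computation. You instead split the one-step map into the explicit predictor \eqref{eq:SI_scheme_Xa} and the implicit corrector \eqref{tstep}--\eqref{syst}, prove the corrector is an \emph{unconditional} $\ell^1$-nonexpansion by the classical Kruzhkov sign-pairing argument (your summation by parts against the zero-flux end rows of $\boldsymbol{T}$ is exact, and $(\sgn(E_{j+1})-\sgn(E_j))(d_{j+1}-d_j)\geq 0$ holds because $\mathcal{D}$ is nondecreasing and $E_j=0$ forces $d_j=0$), and handle the predictor by the monotonicity already established in Lemma~\ref{lem:querpu} together with the Crandall--Tartar lattice argument, with the $\kappa_j^n$-factors and the reaction Lipschitz bound \eqref{lemm3.2e} supplying the $(1+C\tau)$ growth. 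Your decomposition buys two things the paper leaves implicit: it makes transparent that \eqref{CFLSI} is needed only for the hyperbolic/reactive predictor while the degenerate diffusion solve is $\ell^1$-dissipative with no step-size restriction, and it replaces the paper's coefficient bookkeeping by two standard lemmas; the paper's fused computation, in return, exhibits the constants ($\kappa_j^n$, $\rho_j^n$) entering~$C$ explicitly. One shared caveat: at the outermost interfaces $\xi_{-3/2}$ and $\xi_{N+3/2}$ both arguments need the leftover telescoped flux terms to be nonpositive, which rests on the paper's standing assumption that fluxes at the top and bottom are directed out of the tank (in the degenerate case $\qu^n=0$ the boundary cell is set to zero by \eqref{eq:j=N+1_qu=0}, so no issue arises there); you invoke this correctly, exactly as the paper does when it drops the $-\lambda\nu_{-3/2}^n$- and $-\lambda\eta_{N+3/2}^n$-terms at the end of its proof.
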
 

\begin{proof} 
 We define $\smash{\vartheta_j^n := Y_j^n- X_j^n}$ for $j=-1, \dots, N+1$ and the quantities 
   \begin{align*} 
    \theta_j^{n+1} := \begin{cases} 
    \mu (\beta^n)^2 (\mathcal{D} (Y_j^{n+1}) - \mathcal{D} (X_j^{n+1}))/\vartheta_j^{n+1} \geq 0 
     & \text{if $ \vartheta_j^{n+1} \neq 0$,} \\
     0 & \text{otherwise},  
    \end{cases}  \quad j=-1, \dots, N+1.   
     \end{align*} 
   Furthermore, we let $\mathcal{E}_{j+1/2} (\boldsymbol{X}^n)$ and $\mathcal{E}_{j+1/2} (\boldsymbol{Y}^n)$ 
    be the Engquist-Osher numerical fluxes \eqref{EOjphdef} applied to~$\boldsymbol{X}^n$ and~$\boldsymbol{Y}^n$, and similarly 
     for $\mathcal{B}_{j+1/2}^n$ and $\mathcal{F}_{j+1/2}^n$. Clearly, 
     \begin{align*} 
     &\mathcal{F}_{j+1/2}^n ( \boldsymbol{Y}^n ) -  \mathcal{F}_{j+1/2}^n  (\boldsymbol{X}^n )  \\ 
    &   = \mathcal{B}_{j+1/2}^n ( \boldsymbol{Y}^n ) -  \mathcal{B}_{j+1/2}^n ( \boldsymbol{X}^n ) 
      + \beta^n \bigl( \mathcal{E}_{j+1/2}^n ( \boldsymbol{Y}^n ) -  \mathcal{E}_{j+1/2}^n ( \boldsymbol{X}^n ) \bigr)  \\
       & = \Upw \bigl( \tilde{q}_{j+1/2}^n ; \vartheta_j^n, \vartheta_{j+1}^n \bigr) + \beta^n \gamma_{j+1/2} \biggl( \int_{X_j^n}^{Y_j^n} 
        (f')^+ (s) \, \mathrm{d} s + \int_{X_{j+1}^n}^{Y_{j+1}^n} 
        (f')^- (s) \, \mathrm{d} s  \biggr)  \\ 
      &  = \eta_{j+1/2}^n  \vartheta_j^n   - \nu_{j+1/2}^n  \vartheta_{j+1}^n , 
      \end{align*} 
       where we define 
       \begin{align*} 
       \eta_{j+1/2}^n
       & \coloneqq   
       ( \tilde{q}_{j+1/2}^n)^+   + \gamma_{j+1/2}  \int_0^1 (f')^+ \bigl( X_{j}^n + \sigma (Y_{j}^n - X_{j}^n) \bigr) \, \mathrm{d} \sigma \geq 0  ,  \\
            \nu_{j+1/2}^n & \coloneqq -  \biggl( 
        ( \tilde{q}_{j+1/2}^n)^-  +  \gamma_{j+1/2} \int_0^1 (f')^- \bigl( X_{j+1}^n + \sigma (Y_{j+1}^n - X_{j+1}^n) \bigr) \, \mathrm{d} \sigma\biggr) 
       \geq 0,           \end{align*} 
       hence 
   \begin{align*} 
 -  \lambda  \bigl( [\Delta \mathcal{F} ( \boldsymbol{Y})]_j^n -   [\Delta \mathcal{F} ( \boldsymbol{X})]_j^n \bigr) 
&   =   - \lambda \bigl( \eta_{j+1/2}^n + \nu_{j-1/2}^n \bigr) \vartheta_j^n + \lambda \eta_{j-1/2}^n \vartheta_{j-1}^n + \lambda \nu_{j+1/2}^n \vartheta_{j+1}^n.  
       \end{align*} 
         With this in mind, we obtain from \eqref{eq:SI_scheme_X} and the corresponding scheme for~$\smash{Y_j^n}$ 
   \begin{align}  \label{praffeqns1} \begin{split} 
    \vartheta_{-1}^{n+1} & = \bigl( \kappa_{-1}^n - \lambda  ( \eta_{-1/2}^n + \nu_{-3/2}^n ) \bigr) 
    \vartheta_{-1}^n + \lambda \nu_{-1/2}^n \vartheta_0^n,  \\
   \bigl(1 + \theta_0^{n+1} \bigr) \vartheta_0^{n+1} & = \bigl( \kappa_0^n - \lambda  ( \eta_{1/2}^n + \nu_{-1/2}^n ) \bigr) 
      \vartheta_0^n + \lambda \nu_{1/2}^n \vartheta_1^n +  \lambda \eta_{-1/2}^n \vartheta_{-1}^n +   \theta_1^{n+1} \vartheta_1^{n+1} \\ & \quad +  \tau \gamma_0  
              \bigl( R (\boldsymbol{p}_0^n Y_0^n / c, \boldsymbol{S}_0^n ) -   R (\boldsymbol{p}_0^n X_0^n / c, \boldsymbol{S}_0^n )\bigr),  \\
               \bigl(1 + 2\theta_j^{n+1} \bigr) \vartheta_j^{n+1} & = \bigl( \kappa_j^n - \lambda  ( \eta_{j+1/2}^n + \nu_{j-1/2}^n ) \bigr)
               \vartheta_j^n + \lambda \nu_{j+1/2}^n \vartheta_{j+1}^n + \lambda \eta_{j-1/2}^n \vartheta_{j-1}^n \\ & \quad  + \theta_{j+1}^{n+1} \vartheta_{j+1}^{n+1} 
                + \theta_{j-1}^{n+1} \vartheta_{j-1}^{n+1}  \\ & \quad  
+ \tau \gamma_j \bigl( R (\boldsymbol{p}_j^n Y_j^n / c, \boldsymbol{S}_j^n ) -   R (\boldsymbol{p}_j^n X_j^n / c, \boldsymbol{S}_j^n )\bigr),  \quad j=1, \dots, N-1, \\ 
 \bigl(1 + \theta_N^{n+1} \bigr) \vartheta_N^{n+1} & = \bigl(  \kappa_N^n - \lambda  ( \eta_{N+1/2}^n + \nu_{N-1/2}^n ) \bigr)
               \vartheta_N^n  + \lambda \nu_{N+1/2}^n \vartheta_{N+1}^n   + \lambda \eta_{N-1/2}^n \vartheta_{N-1}^n        \\ & \quad        + \theta_{N-1}^{n+1} \vartheta_{N-1}^{n+1}   
+ \tau \gamma_N \bigl( R (\boldsymbol{p}_N^n Y_N^n / c, \boldsymbol{S}_N^n ) -   R (\boldsymbol{p}_N^n X_N^n / c, \boldsymbol{S}_N^n )\bigr),  \\
 \vartheta_{N+1}^{n+1} & = \bigl(  \kappa_{N+1}^n - \lambda  ( \eta_{N+3/2}^n + \nu_{N+1/2}^n ) \bigr)
               \vartheta_{N+1}^n + \lambda \eta_{N+1/2}^n \vartheta_{N}^n.  
\end{split} 
   \end{align} 
    Finally, we define  
   \begin{align*} 
   \rho_j^n := 
    \int_0^1 \partial_s R \Bigl( (\boldsymbol{p}_j^n / c) \bigl( X_j^n + \sigma (Y_j^n-X_j^n)  \bigr), \boldsymbol{S}_j^n \Bigr)  \, \mathrm{d} \sigma, 
     \quad j=0, \dots, N,   
   \end{align*} 
   such that 
   \begin{align*} 
    \tau \gamma_j \bigl( R (\boldsymbol{p}_j^n Y_j^n / c, \boldsymbol{S}_j^n ) -   R (\boldsymbol{p}_j^n X_j^n / c, \boldsymbol{S}_j^n )\bigr) 
    = \tau \gamma_j \rho_j^n \vartheta_j^n %= \tau \gamma_j \rho_j^n \sgn \bigl( D_j^n \bigr) \bigl|  D_j^n \bigr|   
    , \quad j=0, \dots, N. 
   \end{align*}
   Consquently, we may write \eqref{praffeqns1} as 
   \begin{align*} 
    \vartheta_{-1}^{n+1} & = \bigl( \kappa_{-1}^n - \lambda  ( \eta_{-1/2}^n + \nu_{-3/2}^n ) \bigr) 
    \vartheta_{-1}^n + \lambda \nu_{-1/2}^n \vartheta_0^n,  \\
   \bigl(1 + \theta_0^{n+1} \bigr) \vartheta_0^{n+1} & = \bigl( \kappa_0^n - \lambda  ( \eta_{1/2}^n + \nu_{-1/2}^n ) + \tau \gamma_0 \rho_0^n \bigr) 
      \vartheta_0^n + \lambda \nu_{1/2}^n \vartheta_1^n   +  \lambda \eta_{-1/2}^n \vartheta_{-1}^n  + \theta_1^{n+1} \vartheta_1^{n+1}, \\ 
               \bigl(1 + 2\theta_j^{n+1} \bigr) \vartheta_j^{n+1} & =          \bigl( \kappa_j^n - \lambda  ( \eta_{j+1/2}^n + \nu_{j-1/2}^n ) + \tau \gamma_j \rho_j^n \bigr)
               \vartheta_j^n + \lambda \nu_{j+1/2}^n \vartheta_{j+1}^n  \\ & \quad+ \lambda \eta_{j-1/2}^n \vartheta_{j-1}^n  + \theta_{j+1}^{n+1} \vartheta_{j+1}^{n+1} 
                + \theta_{j-1}^{n+1} \vartheta_{j-1}^{n+1},  \quad j=1, \dots, N-1, \\ 
 \bigl(1 + \theta_N^{n+1} \bigr) \vartheta_N^{n+1} & = \bigl(  \kappa_N^n - \lambda  ( \eta_{N+1/2}^n + \nu_{N-1/2}^n ) + \tau \gamma_N \rho_N^N \bigr)
               \vartheta_N^n + \lambda \eta_{N-1/2}^n \vartheta_{N-1}^n          \\ & \quad      + \lambda \nu_{N+1/2}^n \vartheta_{N+1}^n  
                 + \theta_{N-1}^{n+1} \vartheta_{N-1}^{n+1}, \\ 
            \vartheta_{N+1}^{n+1} & = \bigl(  \kappa_{N+1}^n - \lambda  ( \eta_{N+3/2}^n + \nu_{N+1/2}^n ) \bigr)
            \vartheta_{N+1}^n + \lambda \eta_{N+1/2}^n \vartheta_{N}^n.      
   \end{align*} 
   Since by \eqref{CFLSI}, all coefficients in these equations  are nonnegative, we get 
   \begin{align*} 
     \bigl| \vartheta_{-1}^{n+1}\bigr|  & \leq  \bigl( \kappa_{-1}^n - \lambda  ( \eta_{-1/2}^n + \nu_{-3/2}^n ) \bigr) 
    | \vartheta_{-1}^n |+ \lambda \nu_{-1/2}^n |\vartheta_0^n|,  \\
   \bigl(1 + \theta_0^{n+1} \bigr) \bigl|\vartheta_0^{n+1} \bigr| & \leq  \bigl( \kappa_0^n - \lambda  ( \eta_{1/2}^n + \nu_{-1/2}^n ) + \tau \gamma_0 \rho_0^n \bigr) 
     | \vartheta_0^n|  + \lambda \nu_{1/2}^n |\vartheta_1^n |+    \lambda \eta_{-1/2}^n |\vartheta_{-1}^n|  +  \theta_1^{n+1} \bigl| \vartheta_1^{n+1} \bigr|, \\ 
               \bigl(1 + 2\theta_j^{n+1} \bigr) \bigl|\vartheta_j^{n+1} \bigr| & \leq           \bigl( \kappa_j^n - \lambda  ( \eta_{j+1/2}^n + \nu_{j-1/2}^n ) + \tau \gamma_j \rho_j^n \bigr)
              \bigl| \vartheta_j^n \bigr|  + \lambda \nu_{j+1/2}^n \bigl| \vartheta_{j+1}^n \bigr|  \\ & \quad  + \lambda \eta_{j-1/2}^n \bigl| \vartheta_{j-1}^n\bigr| + \theta_{j+1}^{n+1} \bigl| \vartheta_{j+1}^{n+1} \bigr| 
                + \theta_{j-1}^{n+1} \bigl| \vartheta_{j-1}^{n+1} \bigr| ,  \quad j=1, \dots, N-1, \\ 
 \bigl(1 + \theta_N^{n+1} \bigr) \bigl| \vartheta_N^{n+1}  \bigr| & \leq  \bigl(  \kappa_N^n - \lambda  ( \eta_{N+1/2}^n + \nu_{N-1/2}^n ) + \tau \gamma_N \rho_N^N \bigr)
             | \vartheta_N^n |  + \lambda \eta_{N-1/2}^n \bigl| \vartheta_{N-1}^n   \bigr|        \\ & \quad  
               + \lambda \nu_{N+1/2}^n  \bigl| \vartheta_{N+1}^n \bigr|      + \theta_{N-1}^{n+1}\bigl|  \vartheta_{N-1}^{n+1} \bigr|, \\ 
 \bigl| \vartheta_{N+1}^{n+1} \bigr| & \leq  \bigl(  \kappa_{N+1}^n - \lambda  ( \eta_{N+3/2}^n + \nu_{N+1/2}^n ) \bigr)
               \bigl| \vartheta_{N+1}^n  \bigr| + \lambda \eta_{N+1/2}^n  | \vartheta_{N}^n |.  
   \end{align*} 
   Summing over these inqualities, cancelling terms, rewriting the result again in terms of~$\smash{\{Y_j^n\}}$ 
    and~$\smash{\{X_j^n\}}$, and taking into account that $\smash{\nu_{-3/2}^n  \geq 0}$ and 
     $\smash{\eta_{N+3/2}^n}$  we obtain the inequality 
    \begin{align*} 
     \sum_{j=-1}^{N+1} \bigl| Y_j^{n+1}-X_j^{n+1} \bigr| & \leq \sum_{j=-1}^{N+1} 
    \bigl(  \kappa_j^n    +  \tau \gamma_j \rho_j^n \bigr)   \bigl| Y_j^{n}-X_j^{n} \bigr| \\ & \quad 
          - \lambda \nu_{-3/2}^n |Y_{-1}^n-X_{-1}^n| - \lambda \eta_{N+3/2}^n  |  Y_{N+1}^n - X_{N+1}^n |  \\ 
           & \leq \sum_{j=-1}^{N+1} 
    \bigl(  1 + C \tau)    \bigl| Y_j^{n}-X_j^{n} \bigr|, 
    \end{align*} 
    that is, \eqref{lemm4.4ineq}, 
    where we take into account that $\kappa_j^n \leq 1+ C \tau$ with a suitable constant $C >0$ (see \eqref{kappajndef}) and that 
     the quantities $\smash{\rho_j^n}$ are  uniformly bounded. 
\end{proof}

System~\eqref{syst} can be written as $\boldsymbol{\varphi} ( \boldsymbol{X}^{n+1} ) = \boldsymbol{0}$, where $\boldsymbol{\varphi}$ is a nonlinear vector function defined by the left-hand side of \eqref{syst} and $\mathbfcal{J}_{\boldsymbol{\varphi}}\in\mathbb{R}^{(N+1)\times (N+1)}$ is its associated Jacobian matrix.

 \subsection{Numerical solution of the nonlinear system}  \label{subsec:nr} 
The Newton-Raphson method applied to \eqref{syst} reads
\begin{equation} \label{NRit} 
\mathbfcal{J}_{\boldsymbol{\varphi}}(\boldsymbol{u}^{[k]}) (\boldsymbol{u}^{[k+1]}-\boldsymbol{u}^{[k]} ) = -\boldsymbol{\varphi}(\boldsymbol{u}^{[k]}),\quad k=0,1,\dots,
\end{equation}
with the Jacobian matrix $\smash{\mathbfcal{J}_{\boldsymbol{\varphi}}(\boldsymbol{u})}$ given by \eqref{jacob-def} (recall that $a=\mathcal{D}'$). 
The iteration  starts with the initial vector $\boldsymbol{u}^{[0]}: =\boldsymbol{X}^{n}$ and evolves formally according to \eqref{NRit} until  the termination criterion 
\begin{align*}
 \dfrac{\lVert\boldsymbol{u}^{[k+1]} - \boldsymbol{u}^{[k]} \rVert_1}{\lVert\boldsymbol{u}^{[k]}\rVert_1} < \varepsilon
\end{align*}
is reached, 
where $\varepsilon>0$ is the tolerance and $\lVert\cdot\rVert_1$ is the $\ell_1$-norm. 
After convergence, we set $\boldsymbol{X}^{n+1}\coloneqq \boldsymbol{u}^{[k+1]}$. Since the matrix $\smash{\mathbfcal{J}_{\boldsymbol{\varphi}} ( \boldsymbol{u})}$ is strictly diagonally dominant by columns for 
all~$\boldsymbol{u}$,  it is invertible and the iteration \eqref{NRit} is well defined.

\subsection{Update of the percentage vector~$\smash{\bp_j^n}$ and the soluble concentrations~$\smash{\boldsymbol{S}_j^n}$} \label{subsec:updperc}  An inspection of the 
proof of Lemma~\ref{lemma:p_pos} reveals that although the update formula for the percentages \eqref{eq:scheme_p} 
 is an explicit upwind scheme, it  is still associated with a CFL condition that imposes a bound on 
  $\tau / \Delta \xi^2$ due to the presence of differences of $\mathcal{D}$-values divided by 
  $\Delta \xi$ in the convective flux, cf.\ \eqref{Jjphdef}, \eqref{Phijphdef} and 
   \eqref{eq:Phipflux}. Consequently, to remove this shortcoming so that the whole semi-implicit scheme
   (and not just the update formula for~$X$)  
    is associated with a CFL bound on $\tau / \Delta \xi$ only, we need to resort to an implicit difference scheme.

We write out all terms in \eqref{eq:scheme_p} and evaluate those containing~$\mu$ at the time~$t_{n+1}$:
\begin{align} \label{eq:SI_scheme_p} 
\begin{split}
\bp^{n+1}_j X^{n+1}_j &= \kappa_j^n\bp^{n}_jX^{n}_j 
- \lambda \big(\Phi_{j+1/2}^{n,n+1,+}\bp_j^{n+1} + \Phi_{j+1/2}^{n,n+1,-}\bp_{j+1}^{n+1} -\Phi_{j-1/2}^{n,n+1,+}\bp_{j-1}^{n+1} - \Phi_{j-1/2}^{n,n+1,-}\bp_{j}^{n+1} \big)\\
&\quad + \lambda\delta_{j,0}\beta^nq_{\rm f}^n\bp_{\rm f}^nX_{\rm f}^n + c\tau\gamma_j\bR_{\bC}(\bp_j^nX^{n}_j/c,\bS_j^n),
\end{split}
\end{align}
where
\begin{equation*}
\Phi_{j+1/2}^{n,n+1}\coloneqq \mathcal{F}_{j+1/2}^{n}-\mathcal{J}_{j+1/2}^{n+1}.
\end{equation*}
For the cells outside the tank, this reduces to \eqref{eq:j=-1_Tf} if $\qe=0$ and $j=-1$; otherwise,
\begin{align*}
\bp^{n+1}_{-1} X^{n+1}_{-1} &= (1-\tau\beta^nq_{\mathrm{out}}^n) \bp^{n}_{-1}X^{n}_{-1} + \lambda\beta^n\bigl(
 (\xi_{-1/2} q_{\mathrm{out}}^n +\qe^n ) \bp^{n}_{0}X^{n}_{0} - \big(\xi_{-3/2}q_{\mathrm{out}}^n+\qe^n\big) \bp^{n}_{-1}X^{n}_{-1}
\bigr),\\
\bp^{n+1}_{N+1}X^{n+1}_{N+1} & =  (1+\tau\beta^n q_{\mathrm{out}}^n) \bp^{n}_{N+1}X^{n}_{N+1} % \\
% &\qquad 
- \lambda\bigl((\alpha^n_{N+3/2}+\beta^n\qu^n)\bp^{n}_{N+1}X^n_{N+1} - \beta^n\qu^n\bp^{n}_{N}X^n_{N}\bigr),
\end{align*}
where we recall  that $q_{\mathrm{out}}^n := \qu^n+\qe^n$.
Let us focus the cells $j =0, \dots, N$ and put all the unknowns in a matrix (cells in rows and solid components in columns):
\begin{align*} 
 \boldsymbol{P}^{n}  & \coloneqq 
\begin{bmatrix}
(\bp_0^{n})^\rmT \\
(\bp_1^{n})^\rmT \\
\vdots \\
(\bp_N^{n})^\rmT
\end{bmatrix},\qquad
 \boldsymbol{W}^n \coloneqq 
\begin{bmatrix}
(\lambda\beta^nq_{\rm f}^n\bp_{\rm f}^n X_{\rm f}^{n} + c\frac{\tau}{2}\bR_{\bC}(\bp_0^nX_0^n/c,\bS_0^n))^\rmT\\
c\tau\bR_{\bC}(\bp_1^nX_1^n/c,\bS_1^n)^\rmT \\
\vdots\\
c\tau\bR_{\bC}(\bp_N^nX_N^n/c,\bS_N^n)^\rmT 
\end{bmatrix},
\\
\boldsymbol{M}(\bPhi,\boldsymbol{X})  &  \coloneqq  \diag(\boldsymbol{X} ) \\ & \quad  + \lambda 
\begin{bmatrix} 
\Phi_{1/2}^{+}-\Phi_{-1/2}^{-} &    \Phi_{1/2}^{-} & 0 & \cdots & 0 \\
- \Phi_{1/2}^{+}  & \Phi_{3/2}^{+}  - \Phi_{1/2}^{-} & \ddots & \ddots & \vdots \\ 
0 &  -\Phi_{3/2}^{+} & \ddots & \ddots & 0  \\
   \vdots & \ddots &   \ddots & \Phi_{N-1/2}^{+} - \Phi_{N-3/2}^{-} & \Phi_{N-1/2}^{-} \\
   0 & \cdots & 0  & - \Phi_{N-1/2}^{+} & \Phi_{N+1/2}^{+} - \Phi_{N-1/2}^{-} 
   \end{bmatrix}. 
\end{align*}  
where for a vector $\boldsymbol{X}$ we define $\diag (\boldsymbol{X})  \coloneqq \diag(X_0, \dots, X_N)$.  
With $\boldsymbol{\kappa} \coloneqq(\kappa_0, \ldots, \kappa_N)^\rmT$, we get the linear system
\begin{equation}\label{eq:lin_syst}
 \boldsymbol{M}(\bPhi^{n,n+1},\boldsymbol{X}^{n+1}) \boldsymbol{P}^{n+1} = 
  \diag(\boldsymbol{\kappa}^n)\diag(\boldsymbol{X}^n)\boldsymbol{P}^n + \boldsymbol{W}^n \eqqcolon \boldsymbol{\Theta}^n. 
\end{equation}
In the case $\smash{X_j^{n+1}=0}$ the percentage vector $\smash{\bp_j^{n+1}}$ is irrelevant and one can define $\smash{\bp_j^{n+1}\coloneqq\bp_j^{n}}$.
Then Equation~\eqref{eq:lin_syst} should be modified in the following way.
Row~$j$ is removed in $\smash{\bp^{n+1}}$, $\smash{\bp^{n}}$ and $\smash{\boldsymbol{W}^n}$, and both row~$j$ and column~$j$ should be removed in the matrices $\smash{\boldsymbol{M}(\bPhi^{n,n+1},\boldsymbol{X}^{n+1})}$, $\smash{\diag(\boldsymbol{\kappa}^n)}$ and $\smash{\diag(\boldsymbol{X}^n)}$.
Then, one verifies that $\smash{\boldsymbol{M}(\bPhi^{n,n+1},\boldsymbol{X}^{n+1})}$ is strictly diagonally dominant by columns, 
 and  therefore  invertible without any restrictions. Thus,   the implicit scheme \eqref{eq:lin_syst} is well defined. 
Furthermore, $\smash{\boldsymbol{M}(\bPhi^{n,n+1},\boldsymbol{X}^{n+1})^\rmT}$ is an M-matrix and hence has a  non-negative inverse.

\subsection{Monotonicity and invariant region property} \label{subsec:simon} 

\begin{lemma}\label{lemma:p_pos_SI}
If $\smash{\boldsymbol{\mathcal{U}}_j^n\in\Omega}$ for all~$j$ and \eqref{CFLSI} holds, then 
\begin{align*} 
p_j^{(k),n+1}\geq  0 \quad \text{\em  for all~$k=1, \dots, k_{\boldsymbol{C}}$ and all~$j$}.
 \end{align*} 
\end{lemma}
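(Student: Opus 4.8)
The plan is to follow the structure of the explicit argument in Lemma~\ref{lemma:p_pos}, while exploiting that the semi-implicit treatment moves the convective fluxes — in particular the differences of $\mathcal{D}$-values that forced the bound on $\tau/\Delta\xi^2$ — onto the left-hand side of the linear system~\eqref{eq:lin_syst}. First I would dispose of the cells outside the tank: for $j=-1$ and $j=N+1$ the characteristic factors $\gamma_{j\pm1/2}$ vanish, so the diffusion term is absent and the percentage update coincides with the explicit one; the nonnegativity of $p_{-1}^{(k),n+1}$ and $p_{N+1}^{(k),n+1}$ then follows exactly as in the proof of Lemma~\ref{lemma:p_pos}, where, with no diffusion present, the milder condition~\eqref{CFLSI} already suffices. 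Moreover, whenever $X_j^{n+1}=0$ one sets $\bp_j^{n+1}\coloneqq\bp_j^n\geq\bzero$, and the corresponding row and column are removed from~\eqref{eq:lin_syst}; it therefore remains to treat the interior cells $j=0,\dots,N$ with $X_j^{n+1}>0$.

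For these cells the percentages solve $\boldsymbol{M}(\bPhi^{n,n+1},\boldsymbol{X}^{n+1})\boldsymbol{P}^{n+1}=\boldsymbol{\Theta}^n$, where $\boldsymbol{\Theta}^n=\diag(\boldsymbol{\kappa}^n)\diag(\boldsymbol{X}^n)\boldsymbol{P}^n+\boldsymbol{W}^n$. As established at the end of Section~\ref{subsec:updperc}, the (reduced) matrix $\boldsymbol{M}$ is strictly diagonally dominant by columns, so $\boldsymbol{M}^\rmT$ is an M-matrix and $\boldsymbol{M}^{-1}\geq\boldsymbol{0}$ entrywise. Since $\boldsymbol{P}^{n+1}=\boldsymbol{M}^{-1}\boldsymbol{\Theta}^n$, it thus suffices to prove that $\boldsymbol{\Theta}^n\geq\boldsymbol{0}$ entrywise. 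This is the only estimate to be carried out, and it is both where the CFL condition enters and where the argument becomes genuinely simpler than in the explicit case, since the flux terms are no longer present on this side.

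To verify $\Theta_j^{(k),n}=\kappa_j^nX_j^np_j^{(k),n}+W_j^{(k),n}\geq0$ I would reuse the reaction estimate of Lemma~\ref{lemma:p_pos}. The feed part of $\boldsymbol{W}^n$ (present only for $j=0$) is nonnegative because $\beta^n$, $q_\mathrm{f}^n$, $\bp_\mathrm{f}^n$, $X_\mathrm{f}^n\geq0$. For the reaction part $c\tau\gamma_jR_{\bC}^{(k)}(\bp_j^nX_j^n/c,\bS_j^n)$, the indices $l\in I_{\bC,k}^+$ contribute nonnegatively, whereas for $l\in I_{\bC,k}^-$ the structural assumption $r^{(l)}=\bar{r}^{(l)}C^{(k)}$ gives, using $0\leq\gamma_j\leq1$ and the definition of $M_{\bC}$, the bound $c\tau\gamma_jR_{\bC}^{(k)}\geq\tau\gamma_j\sum_{l\in I_{\bC,k}^-}\sigma_{\bC}^{(k,l)}\bar{r}^{(l)}p_j^{(k),n}X_j^n\geq-\tau M_{\bC}p_j^{(k),n}X_j^n$. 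Combining this with $\kappa_j^n\geq1-\zeta\tau M_{q1}$ from~\eqref{lemm3.2a} yields $\Theta_j^{(k),n}\geq(1-\zeta\tau M_{q1}-\tau M_{\bC})p_j^{(k),n}X_j^n$, and the coefficient is nonnegative because~\eqref{CFLSI} forces $\tau(\zeta M_{q1}+M_{\bC})\leq1$. Since $p_j^{(k),n},X_j^n\geq0$ by $\boldsymbol{\mathcal{U}}_j^n\in\Omega$, this gives $\boldsymbol{\Theta}^n\geq\boldsymbol{0}$ and hence $\boldsymbol{P}^{n+1}\geq\boldsymbol{0}$, which is the claim.

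I do not expect a serious obstacle here; the conceptual content is entirely in the decomposition $\boldsymbol{P}^{n+1}=\boldsymbol{M}^{-1}\boldsymbol{\Theta}^n$ with $\boldsymbol{M}^{-1}\geq\boldsymbol{0}$, which absorbs precisely the flux terms that in Lemma~\ref{lemma:p_pos} had to be dominated at the cost of the restrictive bound on $\tau/\Delta\xi^2$. The only point needing care is the bookkeeping for $j=0$, where $\gamma_0=\tfrac12$ and the feed term appears, but both effects only strengthen the inequality.
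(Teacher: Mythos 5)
Your proposal is correct and takes essentially the same route as the paper's proof: it reduces the claim to the decomposition $\boldsymbol{P}^{n+1}=\boldsymbol{M}(\bPhi^{n,n+1},\boldsymbol{X}^{n+1})^{-1}\boldsymbol{\Theta}^n$ with $\boldsymbol{M}^{-1}\geq\boldsymbol{0}$ and then verifies $\boldsymbol{\Theta}^n\geq\boldsymbol{0}$ entrywise using $\kappa_j^n\geq 1-\zeta\tau M_{q1}$, the structural assumption $r^{(l)}=\bar{r}^{(l)}C^{(k)}$ for $l\in I_{\bC,k}^-$, and the bound $\tau(\zeta M_{q1}+M_{\bC})\leq 1$ implied by \eqref{CFLSI}, exactly as the paper does (including the factor $\gamma_0=\tfrac12$ at $j=0$). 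Your explicit treatment of the boundary cells $j=-1,N+1$ and of cells with $X_j^{n+1}=0$ merely spells out steps the paper leaves implicit, not a different argument.
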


\begin{proof}
Since  $\smash{(\boldsymbol{M}(\bPhi^{n,n+1},\boldsymbol{X}^{n+1}))^{-1}\geq \boldsymbol{0} }$, we  estimate each 
 entry of $\smash{\boldsymbol{\Theta}^n \eqqcolon (\Theta_{jk}^n)}$. 
By Lemma~\ref{lemma:Rbounds}, 
\begin{align*}
\Theta_{0,k}^n  &= \kappa^n X_0^np_0^{(k),n} + \lambda\beta^nq_{\rm f}^n\bp_{\rm f}^n X_{\rm f}^{n} + c\frac{\tau}{2}R_{\bC}^{(k)}(\bp_0^nX_0^n/c,\bS_0^n)\\
&\geq
\left(1 - {\tau\zeta M_{q1}}\right)X_0^np_0^{(k),n} + 0 + c\frac{\tau}{2}\sum_{l\in I_{\bC,k}^-}\sigma_{\bC}^{(k,l)}\bar{r}^{(l)} \bigl(\bp_0^{n}X_0^n/c, \bS_0^{n} \bigr) p_0^{(k),n}X_0^n/c\\
&\geq\left(1 - {\tau\zeta M_{q1}} - {\tau}M_{\bC}/2 \right)p_0^{(k),n}X_0^n\geq 0, 
\end{align*}
whereas for all the other $j\neq 0$,  
\begin{align*}
\Theta_{j,k}^n 
&=\kappa^n X_j^n p_j^{(k),n} + c\tau R_{\bC}^{(k)}(\bp_j^nX_j^n/c,\bS_j^n)
\geq\left(1 - {\tau\zeta M_{q1}} - \tau M_{\bC}
\right)p_j^{(k),n}X_j^n\geq 0.
\end{align*}
\end{proof}

\begin{lemma}\label{lemma:sum_p_SI}
If $\smash{\boldsymbol{\mathcal{U}}_j^n\in\Omega}$ for all~$j$ and \eqref{CFLSI} holds, then
\begin{align}  \label{p_sumeq_SI} 
p_j^{(1),n+1} +  \cdots + p_j^{(k_{\boldsymbol{C}}),n+1} =  1 \quad \text{\em for all~$j$}.
 \end{align} 
\end{lemma}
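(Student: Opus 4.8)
The plan is to mimic the conservation argument of Lemma~\ref{lemma:sum_p}, but now applied to the coupled linear system~\eqref{eq:lin_syst} rather than cell by cell. Writing $\boldsymbol{1}_{k_{\bC}}$ for the all-ones vector of length $k_{\bC}$, the quantity I want to control is the vector of row sums $\boldsymbol{\sigma}\coloneqq\boldsymbol{P}^{n+1}\boldsymbol{1}_{k_{\bC}}$, whose $j$-th entry is $p_j^{(1),n+1}+\cdots+p_j^{(k_{\bC}),n+1}$. First I would dispose of the boundary and degenerate cells: for $j=-1$ and $j=N+1$ the $\bp$-updates are explicit and identical in form to those used in Lemma~\ref{lemma:sum_p}, so the same summation-and-subtraction against~\eqref{eq:update_-1} and~\eqref{eq:update_N+1} settles the claim there; and whenever $X_j^{n+1}=0$ the convention~\eqref{pkdef} sets $\bp_j^{n+1}=\bp_j^{n}$, so the sum is inherited from time~$t^n$. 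It then remains to treat the interior cells $j=0,\dots,N$ with $X_j^{n+1}>0$, which are coupled through~\eqref{eq:lin_syst}.

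The key step is to multiply the matrix identity~\eqref{eq:lin_syst} on the right by $\boldsymbol{1}_{k_{\bC}}$, obtaining $\boldsymbol{M}(\bPhi^{n,n+1},\boldsymbol{X}^{n+1})\boldsymbol{\sigma}=\boldsymbol{\Theta}^n\boldsymbol{1}_{k_{\bC}}$. Here I would use the inductive hypothesis $\boldsymbol{P}^n\boldsymbol{1}_{k_{\bC}}=\boldsymbol{1}$, the normalization of the feed percentages $\sum_{k}p_{\rm f}^{(k),n}=1$, and the defining relation $R=c\sum_{k}R_{\bC}^{(k)}$, to evaluate the right-hand side: the $j$-th entry of $\boldsymbol{\Theta}^n\boldsymbol{1}_{k_{\bC}}$ collapses to $\kappa_j^nX_j^n+\lambda\delta_{j,0}\beta^nq_{\rm f}^nX_{\rm f}^n+\tau\gamma_jR(\bp_j^nX_j^n/c,\bS_j^n)$.

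The decisive observation is that the constant vector $\boldsymbol{1}$ solves the very same system. Indeed, using $\Phi^{+}+\Phi^{-}=\Phi$ together with the out-directed boundary fluxes $\Phi_{-1/2}^{n,n+1,+}=0$ and $\Phi_{N+1/2}^{n,n+1,-}=0$, one computes $(\boldsymbol{M}\boldsymbol{1})_j=X_j^{n+1}+\lambda[\Delta\Phi^{n,n+1}]_j$; and since $\Phi_{j+1/2}^{n,n+1}=\mathcal{F}_{j+1/2}^n-\mathcal{J}_{j+1/2}^{n+1}$ with $\lambda\mathcal{J}_{j+1/2}^{n+1}=\mu(\beta^n)^2\gamma_{j+1/2}(\mathcal{D}(X_{j+1}^{n+1})-\mathcal{D}(X_j^{n+1}))$, the scalar update~\eqref{eq:SI_scheme_X} rearranges to exactly $X_j^{n+1}+\lambda[\Delta\Phi^{n,n+1}]_j=\kappa_j^nX_j^n+\lambda\delta_{j,0}\beta^nq_{\rm f}^nX_{\rm f}^n+\tau\gamma_jR(\bp_j^nX_j^n/c,\bS_j^n)$, that is, $\boldsymbol{M}\boldsymbol{1}=\boldsymbol{\Theta}^n\boldsymbol{1}_{k_{\bC}}$. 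Thus $\boldsymbol{\sigma}$ and $\boldsymbol{1}$ solve the same linear system, and since $\boldsymbol{M}(\bPhi^{n,n+1},\boldsymbol{X}^{n+1})$ is strictly diagonally dominant by columns and hence invertible (Section~\ref{subsec:updperc}), I conclude $\boldsymbol{\sigma}=\boldsymbol{1}$, which is precisely~\eqref{p_sumeq_SI}.

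I expect the main obstacle to be the bookkeeping when some interior $X_j^{n+1}=0$, since then~\eqref{eq:lin_syst} is solved only after deleting the corresponding rows and columns. Here I would exploit that the scalar identity $\boldsymbol{M}\boldsymbol{1}=\boldsymbol{\Theta}^n\boldsymbol{1}_{k_{\bC}}$ holds row by row regardless of whether $X_j^{n+1}$ vanishes, being merely a restatement of~\eqref{eq:SI_scheme_X}. Restricting to the active rows and transferring the already-known entries $\sigma_{j_0}=1$ of the deleted cells to the right-hand side shows that the all-ones vector on the active index set solves the reduced system, whose matrix remains invertible by the same column-diagonal-dominance argument; uniqueness then forces $\sigma_j=1$ on the active cells as well, completing the proof.
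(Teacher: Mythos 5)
Your proof is correct and takes essentially the same route as the paper's: the paper sums the implicit percentage equations \eqref{eq:SI_scheme_p}, subtracts the $X$-scheme rewritten in terms of $\Phi^{n,n+1}$, and deduces from $\boldsymbol{M}^{n,n+1}\boldsymbol{y}^{n+1}=\bzero$ (with $y_j^{n+1}=p_j^{(1),n+1}+\cdots+p_j^{(k_{\bC}),n+1}-1$) and the invertibility of $\boldsymbol{M}$ that $\boldsymbol{y}^{n+1}=\bzero$ --- which is precisely your observation that $\boldsymbol{\sigma}$ and $\boldsymbol{1}$ solve the same linear system. Your explicit verification of the row-sum identity via the out-directed boundary fluxes $\Phi_{-1/2}^{n,n+1,+}=0$, $\Phi_{N+1/2}^{n,n+1,-}=0$, and your handling of the deleted rows/columns when some $X_j^{n+1}=0$, merely spell out details the paper leaves implicit.
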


\begin{proof}
If $X_j^{n+1}=0$, then by definition, 
\begin{align*}  
 p_j^{(1),n+1}+\cdots+p_j^{(k_{\bC}),n+1}=p_j^{(1),n}+\cdots+p_j^{(k_{\bC}),n} =1, 
 \end{align*} 
  so let us 
assume that $X_j^{n+1}>0$.
We sum up all equations in~\eqref{eq:SI_scheme_p} and use the notation
\begin{align*} 
\mathcal{P}_j^{n+1}\coloneqq p_j^{(1),n+1}+\cdots+p_j^{(k_{\bC}),n+1}
\end{align*} 
to obtain
\begin{align*}
X^{n+1}_j\mathcal{P}_j^{n+1} &= \kappa_j^n X^{n}_j \\
& \quad - \lambda \big(\Phi_{j+1/2}^{n,n+1,+}\mathcal{P}_j^{(k),n+1} + \Phi_{j+1/2}^{n,n+1,-}\mathcal{P}_{j+1}^{(k),n+1} - \Phi_{j-1/2}^{n,n+1,+}\mathcal{P}_{j-1}^{(k),n+1} - \Phi_{j-1/2}^{n,n+1,-}\mathcal{P}_{j}^{(k),n+1}\big)\\
& \quad + \lambda\delta_{j,0}\beta^nq_{\rm f}^nX_{\rm f}^n + \tau\gamma_j R(\bp_j^nX^{n}_j/c,\bS_j^n).
\end{align*}
We subtract component~$j$ of Equation~\eqref{eq:SI_scheme_Xa}, let $y_j^{n+1}\coloneqq\mathcal{P}_j^{n+1}-1$ and obtain
\begin{align*} 
X^{n+1}_jy_{j+1}^{n+1} &= - \lambda \big(\Phi_{j+1/2}^{n,n+1,+}y_j^{n+1} + \Phi_{j+1/2}^{n,n+1,-}y_{j+1}^{n+1} - \Phi_{j-1/2}^{n,n+1,+}y_{j-1}^{n+1} - \Phi_{j-1/2}^{n,n+1,-}y_{j}^{n+1}\big).
\end{align*}
Thus, with $\smash{\boldsymbol{y}_{j+1}^{n+1}\coloneqq(y_{0}^{n+1}, \ldots, y_{N}^{n+1})^\rmT}$, we get 
$\smash{\boldsymbol{M}^{n,n+1}\boldsymbol{y}_{j+1}^{n+1}=\bzero}$, which implies $\smash{\boldsymbol{y}_{j+1}^{n+1}=\bzero}$,
i.e.~\eqref{p_sumeq_SI}.
\end{proof}

For Equation~\eqref{eq:PDE_S} we have
\begin{align*}
  \bS^{n+1}_j &= \kappa_j^n\bS^{n}_j -
\lambda \Bigg(
\dfrac{(\rho_X\tilde{q}^{n}-\mathcal{F}^{n}+\mathcal{J}^{n+1})_{j+1/2}^{+}}{\rho_X - X_{j}^{n+1}}\bS_j^{n+1} + 
\dfrac{(\rho_X\tilde{q}^{n}-\mathcal{F}^{n}+\mathcal{J}^{n+1})_{j+1/2}^{-}}{\rho_X - X_{j+1}^{n+1}}\bS_{j+1}^{n+1}\\
&\quad -
\dfrac{(\rho_X\tilde{q}^{n}-\mathcal{F}^{n}+\mathcal{J}^{n+1})_{j-1/2}^{+}}{\rho_X - X_{j-1}^{n+1}}\bS_{j-1}^{n+1} - 
\dfrac{(\rho_X\tilde{q}^{n}-\mathcal{F}^{n}+\mathcal{J}^{n+1})_{j-1/2}^{-}}{\rho_X - X_{j}^{n+1}}\bS_{j}^{n+1} \Bigg)\\
&\quad + \lambda\delta_{j,0}\beta^nq_{\rm f}^n\bS_{\rm f}^n + \tau\gamma_j\bR_{\bS}(\bp_j^nX^{n}_j/c,\bS_j^n).\label{eq:scheme_S}
\end{align*}
For $j=-1, N+1$, this scheme is explicit and analogous to those above.
For $j=0, \ldots, N$, we write the formula in matrix form as follows. 
Define
\begin{align*}
\theta_{j+1/2}^{n,n+1}&\coloneqq (\rho_X\tilde{q}^{n}-\mathcal{F}^{n}+\mathcal{J}^{n+1})_{j+1/2},\qquad y_j^n\coloneqq\frac{1}{\rho_X - X_{j}^{n}},\\
\mathbfcal{S}^n  & \coloneqq 
\begin{bmatrix}
(\boldsymbol{S}_0^{n})^\rmT\\
(\boldsymbol{S}_1^{n})^\rmT\\
\vdots\\
(\boldsymbol{S}_{N}^{n})^\rmT
\end{bmatrix},
\qquad
\mathbfcal{W}^n \coloneqq 
\begin{bmatrix}
(\lambda\beta^nq_{\rm f}^n\bS_{\rm f}^n + \frac{\tau}{2}\bR_{\bS}(\bp_0^nX_0^n/c,\bS_0^n))^\rmT\\
\tau\bR_{\bS}(\bp_1^nX_1^n/c,\bS_1^n)^\rmT\\
\vdots\\
\tau\bR_{\bS}(\bp_N^nX_N^n/c,\bS_N^n)^\rmT
\end{bmatrix}.
\end{align*}
Since $\tilde{q}_{-1/2}^{n}\leq 0$ and $\tilde{q}_{N+1/2}^{n}\geq 0$, we have
\begin{align*}
\theta_{-1/2}^{n,n+1,+} =  \rho_X\tilde{q}_{-1/2}^{n+1,+} = 0,\quad 
\theta_{N+1/2}^{n,n+1,-} =  \rho_X\tilde{q}_{N+1/2}^{n+1,-} = 0.
\end{align*}
Then we form the tridiagonal matrix
\begin{multline*}
\boldsymbol{M}_{\!\bS}(\boldsymbol{\theta},\boldsymbol{y})  \coloneqq \boldsymbol{I}_{N+1}\\
 + \lambda 
   \begin{bmatrix} 
   (\theta_{1/2}^{+}-\theta_{-1/2}^{-})y_{0} & \theta_{1/2}^{-}y_{1} & 0 & \cdots & 0 \\
   -\theta_{1/2}^{+} y_{0}  & (\theta_{3/2}^{+} - \theta_{1/2}^{-})y_{1}  &  \ddots& \ddots & \vdots \\ 
0 & -\theta_{3/2}^{+} y_{1} & \ddots & \ddots & 0  \\
   \vdots & \ddots & \ddots & (\theta_{N-1/2}^{+} - \theta_{N-3/2}^{-})y_{N-1} & \theta_{N-1/2}^{-}y_{N} \\
   0 & \cdots & 0 & - \theta_{N-1/2}^{+}y_{N-1}& (\theta_{N+1/2}^{+}  - \theta_{N-1/2}^{-}) y_{N}
   \end{bmatrix}.
\end{multline*}
Then we get the linear system
\begin{equation}\label{eq:lin_syst_S}
 \boldsymbol{M}_{\!\bS}(\boldsymbol{\theta}^{n,n+1},\boldsymbol{y}^{n+1}) \mathbfcal{S}^{n+1} = \diag(\boldsymbol{\kappa}^n) \mathbfcal{S}^n + \mathbfcal{W}^n.
\end{equation}
The matrix $\boldsymbol{M}_{\!\bS}(\boldsymbol{\theta}^{n,n+1},\boldsymbol{y}^{n+1})$ is diagonally dominant by columns; hence its transpose is an M-matrix and invertible with a non-negative inverse, so that \eqref{eq:lin_syst_S} defines~$\mathbfcal{S}^{n+1}$.

\begin{lemma}\label{lemma:S_pos_SI}
If $\smash{\boldsymbol{\mathcal{U}}_j^n\in\Omega}$ for all~$j$ and \eqref{CFLSI} holds, then 
\begin{align*}  \label{p_posineq_SI} 
S_j^{(k),n+1}\geq  0 \quad \text{\em  for all~$k=1, \dots, k_{\boldsymbol{S}}$ and all~$j$}.
 \end{align*} 
\end{lemma}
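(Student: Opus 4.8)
The plan is to mimic the proof of Lemma~\ref{lemma:p_pos_SI} for the percentages, exploiting that the linear system \eqref{eq:lin_syst_S} places all convective and diffusive fluxes on the left-hand side, inside $\boldsymbol{M}_{\!\bS}(\boldsymbol{\theta}^{n,n+1},\boldsymbol{y}^{n+1})$. Since that matrix is strictly diagonally dominant by columns with non-positive off-diagonal entries, its transpose is an M-matrix, so $\boldsymbol{M}_{\!\bS}^{-1}\geq\boldsymbol{0}$ (as already recorded just before the statement). Consequently $\mathbfcal{S}^{n+1}=\boldsymbol{M}_{\!\bS}^{-1}(\diag(\boldsymbol{\kappa}^n)\mathbfcal{S}^n+\mathbfcal{W}^n)$, and it suffices to prove that every entry of the right-hand side $\diag(\boldsymbol{\kappa}^n)\mathbfcal{S}^n+\mathbfcal{W}^n$ is nonnegative. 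The outer cells $j=-1$ and $j=N+1$ are treated separately: there the scheme is explicit, so nonnegativity follows exactly as in Lemma~\ref{lemma:S_pos}, or is immediate from \eqref{eq:j=-1_Tf} and \eqref{eq:j=N+1_qu=0} when $\qe^n=0$ or $\qu^n=0$.

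For the bulk cells $j=0,\dots,N$ I would estimate the entries of the right-hand side componentwise. Crucially, this right-hand side contains no flux contributions at all, only $\kappa_j^n S_j^{(k),n}$, the feed term (for $j=0$), and the reaction term; the diffusive flux, which forced the $\tau/\Delta\xi^2$ bound in the explicit Lemma~\ref{lemma:S_pos}, has been absorbed into $\boldsymbol{M}_{\!\bS}$ and no longer needs to be dominated by the time step. Using $\kappa_j^n\geq 1-\zeta\tau M_{q1}$ from \eqref{lemm3.2a} and the structural assumption that for $l\in I_{\bS,k}^-$ one has $r^{(l)}=\bar r^{(l)}S^{(k)}$, so that $\tau R_{\bS}^{(k)}(\bp_j^nX_j^n/c,\bS_j^n)\geq-\tau M_{\bS}S_j^{(k),n}$, I obtain for $j\neq 0$ that the $(j,k)$-entry is at least $(1-\zeta\tau M_{q1}-\tau M_{\bS})S_j^{(k),n}\geq 0$, the factor being nonnegative by \eqref{CFLSI}, which forces $\tau(\zeta M_{q1}+M_{\bS})\leq 1$. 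For $j=0$ the reaction term carries a factor $\tfrac12$ and there is an extra nonnegative feed contribution, giving $(1-\zeta\tau M_{q1}-\tfrac{\tau}{2}M_{\bS})S_0^{(k),n}+\lambda\beta^n\qf^nS_{\rm f}^{(k),n}\geq 0$. This mirrors the corresponding estimate in Lemma~\ref{lemma:p_pos_SI}.

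The step requiring the most care is verifying that the M-matrix machinery actually applies, i.e.\ that $\boldsymbol{M}_{\!\bS}$ has the claimed sign pattern. This hinges on the weights $y_j^{n+1}=1/(\rho_X-X_j^{n+1})$ being strictly positive, which requires $X_j^{n+1}\leq\hat{X}<\rho_X$; here I would invoke Lemma~\ref{lemma:monotoneXSI}, noting that $X$ is updated before $\bS$, together with the standing assumption $\hat{X}<\rho_X$ (see \eqref{vhsass}). With $y_j^{n+1}>0$ the off-diagonal entries $-\theta_{j-1/2}^{n,n+1,+}y_{j-1}^{n+1}$ and $\theta_{j+1/2}^{n,n+1,-}y_{j+1}^{n+1}$ are $\leq 0$, and the boundary simplifications $\theta_{-1/2}^{+}=\theta_{N+1/2}^{-}=0$ ensure column diagonal dominance, so $\boldsymbol{M}_{\!\bS}^{-1}\geq\boldsymbol{0}$. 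Combining this with the componentwise nonnegativity of the right-hand side completes the argument; notably, no restriction on $\tau/\Delta\xi^2$ enters, which is precisely the advantage of the semi-implicit treatment.
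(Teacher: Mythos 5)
Your proposal is correct and follows essentially the same route as the paper: the paper's (terse) proof likewise invokes the column diagonal dominance of $\boldsymbol{M}_{\!\bS}(\boldsymbol{\theta}^{n,n+1},\boldsymbol{y}^{n+1})$, hence $\boldsymbol{M}_{\!\bS}^{-1}\geq\boldsymbol{0}$, and then verifies entrywise nonnegativity of $\diag(\boldsymbol{\kappa}^n)\mathbfcal{S}^n+\mathbfcal{W}^n$ via the estimate $\kappa_j^n S_j^{(k),n}+\tau R_{\bS}^{(k)}\geq(1-\tau\zeta M_{q1}-\tau M_{\bS})S_j^{(k),n}\geq 0$ under \eqref{CFLSI}. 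The details you add beyond the paper's proof (the $j=0$ case with the $\tau/2$ factor and feed term, the explicit boundary cells, and the check that $y_j^{n+1}>0$ via Lemma~\ref{lemma:monotoneXSI}) are exactly what the paper leaves implicit in its reference to the proof of Lemma~\ref{lemma:p_pos_SI}, and they are all correct.
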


\begin{proof}
This is similar to the proof of Lemma~\ref{lemma:p_pos_SI}.
The element $(j,k)$, $j\neq 0$, of the matrix on the right-hand side of~\eqref{eq:lin_syst_S} is estimated by
\begin{align*}
&\kappa^n S_j^{(k),n} + \tau R_{\bS}^{(k)}(\bp_j^nX_j^n/c,\bS_j^n)
\geq\left(1 - {\tau\zeta M_{q1}} - \tau M_{\bS}
\right)S_j^{(k),n}\geq 0.
\end{align*}
\end{proof}

\section{Numerical simulations} \label{sec:numer}

For all the examples, we consider a cylindrical SBR of depth $B=3\,\rm m$ and cross-sectional area $A =400\, \rm  m^2$, and the reactive settling process of an activated sludge described by the ASM1 model~\cite{Henze2000ASMbook}; see Table~\ref{table:AMS1_vari}. 
The constitutive functions used in the examples are
\begin{align*}
\vhs(X)  \coloneqq  
\dfrac{v_0}{ 1 + (X/ \breve{X})^\eta} , \quad 
\sigma_{\mathrm{e}}(X) \coloneqq  \begin{cases} 0  &  \text{if $X  < X_{\mathrm{c}}$,}  \\
 \sigma_0 (X-X_{\mathrm{c}}) & \text{if $X \geq X_{\mathrm{c}}$}
 \end{cases}  
\end{align*}
with $v_0 = 1.76\times 10^{-3}\, \rm m/s$, $\breve{X} = 3.87\, \rm kg/m^3$, $\eta = 3.58$, $X_{\mathrm{c}} = 5\, \rm kg/m^3$ and $\sigma_0 = 0.2\,\rm m^2/s^2$. 
Other parameters are $\rho_X = 1050\, \rm kg/m^3$, $\rho_L = 998\, \rm kg/m^3$, $g = 9.81 \,\rm m/s^2$, and $B_{\mathrm{c}} = 2\, \mathrm{m}$.  
To satisfy \eqref{vhsass}, one could multiply $\vhs(X)$ by a function that is one for most concentrations but tends to zero smoothly as $X\to\hat{X}^-$, where the maximum concentration~$\hat{X}$ is set to a large number; e.g.\ $\hat{X}=30$~kg/m$^3$ for activated sludge that we simulate here.
When $\sigma_0>0$, the second-order derivative compression term will balance the convective flux and the particulate concentration~$X$ never reaches~$\hat{X}$ in  simulations.
To run the scheme with $\sigma_0=0$, an alternative is to set a concentration $X^{\mathrm{t}}$  from which we redefine and extend the settling velocity function by its tangent as
\begin{equation*}
\vhs(X) \coloneqq\begin{cases}
v_0/( 1 + (X/ \breve{X})^\eta)  & \text{if $0\leq X\leq X^{\mathrm{t}}$,} \\
\vhs(X^{\mathrm{t}}) + \vhs'(X^{\mathrm{t}})(X-X^{\mathrm{t}}) & \text{if $X^{\mathrm{t}}<X\leq \hat{X}$,} 
\end{cases}
\end{equation*}
where $\hat{X}$ is given by the intersection of the tangent with the $X$-axis (zero velocity), i.e., 
\begin{align*} 
\hat{X}\coloneqq X^{\mathrm{t}}- \vhs(X^{\mathrm{t}})/ \vhs'(X^{\mathrm{t}}). \end{align*}  
We here utilize $X^\mathrm{t}=25 \, \mathrm{kg}/\mathrm{m}^3$, such that $\hat{X}=31.992  \, \mathrm{kg}/\mathrm{m}^3$. 

The initial concentrations have been chosen as 
\begin{align*}  
       \bC^0(z) & =  \begin{cases}  \boldsymbol{0}   &  \text{if $z<2.0 \,{\rm m}$,} \\
      ( 0.8889,    0.0295,    1.4503,   0.0904,   0.7371,    0.0025 )^{\rm T}   &  \text{if  $z \geq 2.0 \,{\rm m}$,}  
       \end{cases} \\ 
       \bS^0(z) & =  \begin{cases}   \boldsymbol{0}    &  \text{if  $z<2.0 \,{\rm m}$,} \\
       ( 0.04,    0.0026,    0.0,    0.0333,    0.0004,    0.0009 )^{\rm T}   &  \text{if  $z \geq 2.0 \,{\rm m}$} 
        \end{cases}       
\end{align*}
(with units as in Table~\ref{table:AMS1_vari}) while the feed concentrations are \cite{Henze2000ASMbook}
\begin{align*}
 \bCf(t) &= \frac{\Xf(t)}{(0.04+0.16+0.096+1\cdot 10^{-6}) c}
{(0.04,\ 0.16-0.01828,\ 0.096,\ 1\cdot 10^{-6},\ 0,\ 0.01828)^\rmT},\\
  \bSf(t) &= ( 0.04,\    0.064,\    0.0,\    0.001,\    0.0125   ,\ 0.0101 )^{\rm T},
\end{align*}
where the total solids feed concentration~$\Xf(t)$ is given by  Table~\ref{table:Example1}.
When plotting the particulate concentrations, we prefer to not plot $\smash{C^{(2)}=X_{\mathrm{S-ND}}}$, but rather $\smash{X_\mathrm{S}=C^{(2)}+X_{\mathrm{ND}}=C^{(2)}+C^{(6)}}$ and $\smash{X_{\mathrm{ND}}=C^{(6)}}$.
All results are shown after transforming back to the original coordinates.

%%%%%%%%%%%%%%%%%%%%%%%%%%%%%%%%
\subsection{Example~1}

\begin{table}[t]
	\caption{Example~1: time functions for the simulation. \label{table:Example1}} 	
	\smallskip 
	\centering
	{\small
		\begin{tabular}{lcccccc} \toprule 
			Stage  & Time period [h]  & $X_\mathrm{f}(t) [\rm kg/m^3]$ & $\Qf(t) [\rm m^3/h]$ & $\Qu(t) [\rm m^3/h]$ & $\Qe(t) [\rm m^3/h]$ & Model\\
			\midrule
			Fill   & $0\leq t<1$   & 5 & 790 & 0  & 0    & PDE\\
			React  & $1\leq t<3$   & 0 & 0   & 0  & 0    & ODE\\
			Settle & $3\leq t<5$   & 0 & 0   & 0  & 0    & PDE\\
			Draw   & $5\leq t<5.5$ & 0 & 0   & 0  & 1570 & PDE\\
			Idle   & $5.5\leq t<6$ & 0 & 0   & 10 & 0    & PDE\\
			\bottomrule 
	\end{tabular}} 
\end{table}%

\begin{figure}[t]
	\centering 
	\centering 
	\includegraphics{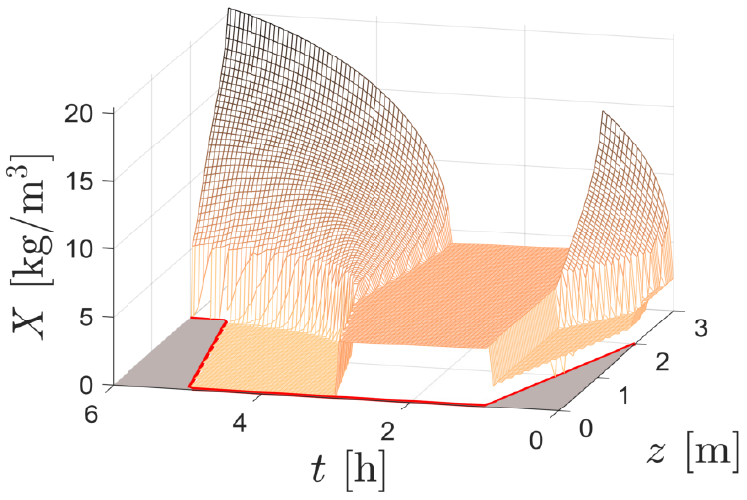} 
	\caption{Example~1: simulated concentration (semi-implicit scheme, $N = 100$) of 
	 total suspended solids.}   \label{fig:Concen1}
\end{figure}%

\begin{figure}[t] 
	\centering 
	\includegraphics{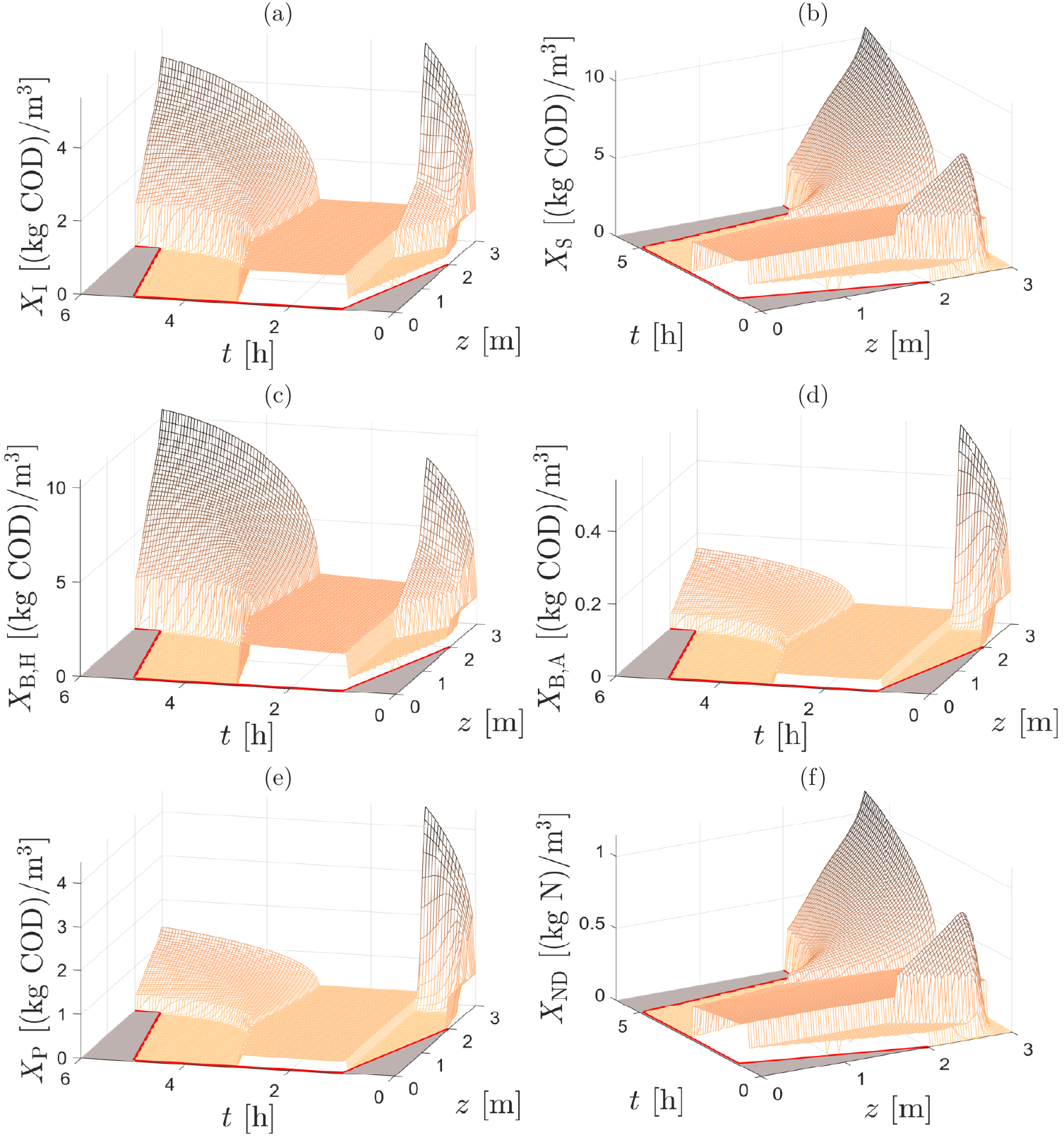} 
	\caption{Example~1: simulated concentrations   of  solid components (semi-implicit scheme, $N = 100$): 
		(a) particulate inert organic matter, 			(b) slowly biodegradable substrate, (c) active heterotrophic biomass, 
			 (d)  active autotrophic biomass, (e) particle products from biomass decay, 
			  (f) particulate biodegradable organic nitrogen.} \label{fig:Concen2}
	\end{figure}%
	
	\begin{figure}[t]
		\centering 
	\includegraphics{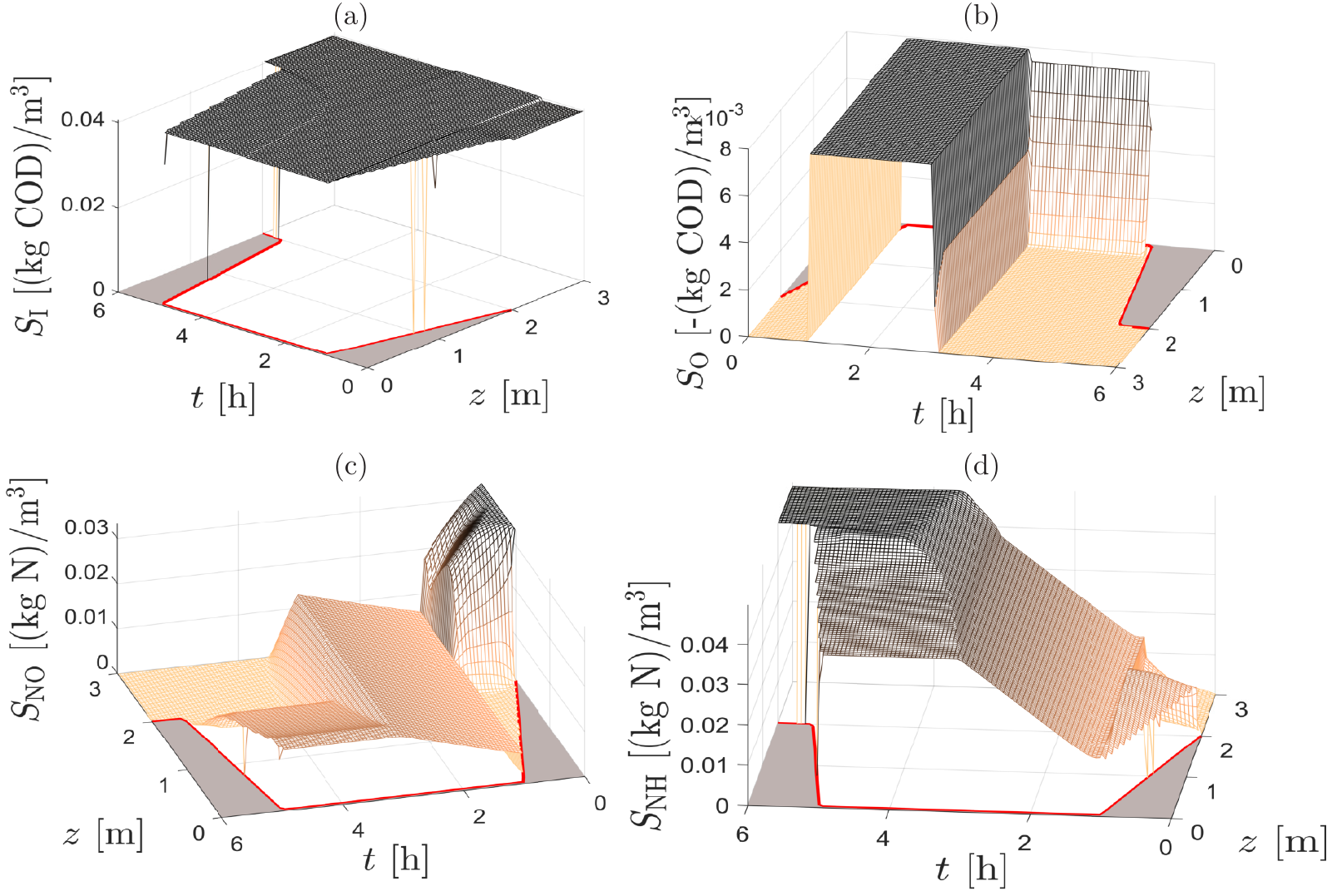} 
	\caption{Example~1: simulated densities (semi-implicit scheme, $N = 100$) of soluble components (part~1): 
	(a) soluble inert organic matter, (b) oxygen, (c) nitrate and nitrite nitrogen, 
	(d)  $\mathrm{NH}_{4}^+ \ + \ \mathrm{NH}_3$ nitrogen.} \label{fig:Concen3}
	\end{figure}%
	
	\begin{figure}[t]
		\centering 
	\includegraphics{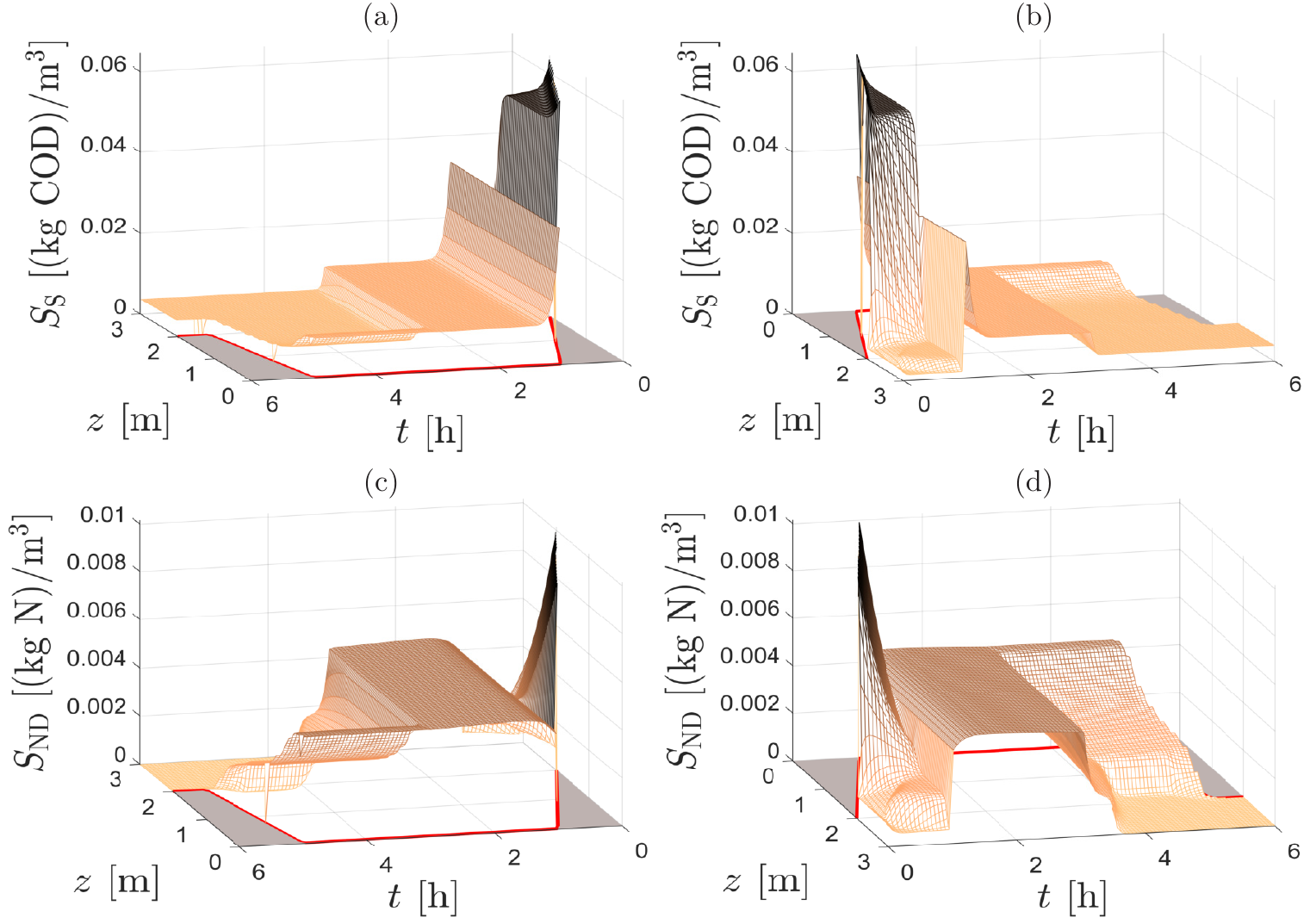} 
	\caption{Example~1: simulated densities (semi-implicit scheme, $N = 100$) of soluble components (part~2): 
	(a, b) readily biodegradable substrate (two different views), 
	  (c, d)  soluble biodegradable organic nitrogen (two different views).} \label{fig:Concen4}
	\end{figure}%

 It is the 
purpose of this example to illustrate the SBR model as a whole. (The performance of the three numerical schemes SBR2 in~\cite{SDAMM_SBR2}, and the explicit and semi-implicit ones here in terms of errors and efficiency is studied in Examples~2 and~3.) We simulated the five stages of an SBR as outlined in Table~\ref{table:Example1}. The duration of the 
  whole cycle of stages during a couple of hours is realistic.  The results are illustrated in Figures~\ref{fig:Concen1} to \ref{fig:Concen4}, which depict the concentration profiles of total suspended solids, particulate, and soluble components within the reactor vessel, respectively.

\subsection{Example~2}

\begin{figure}[t] 
	\centering 
	\includegraphics{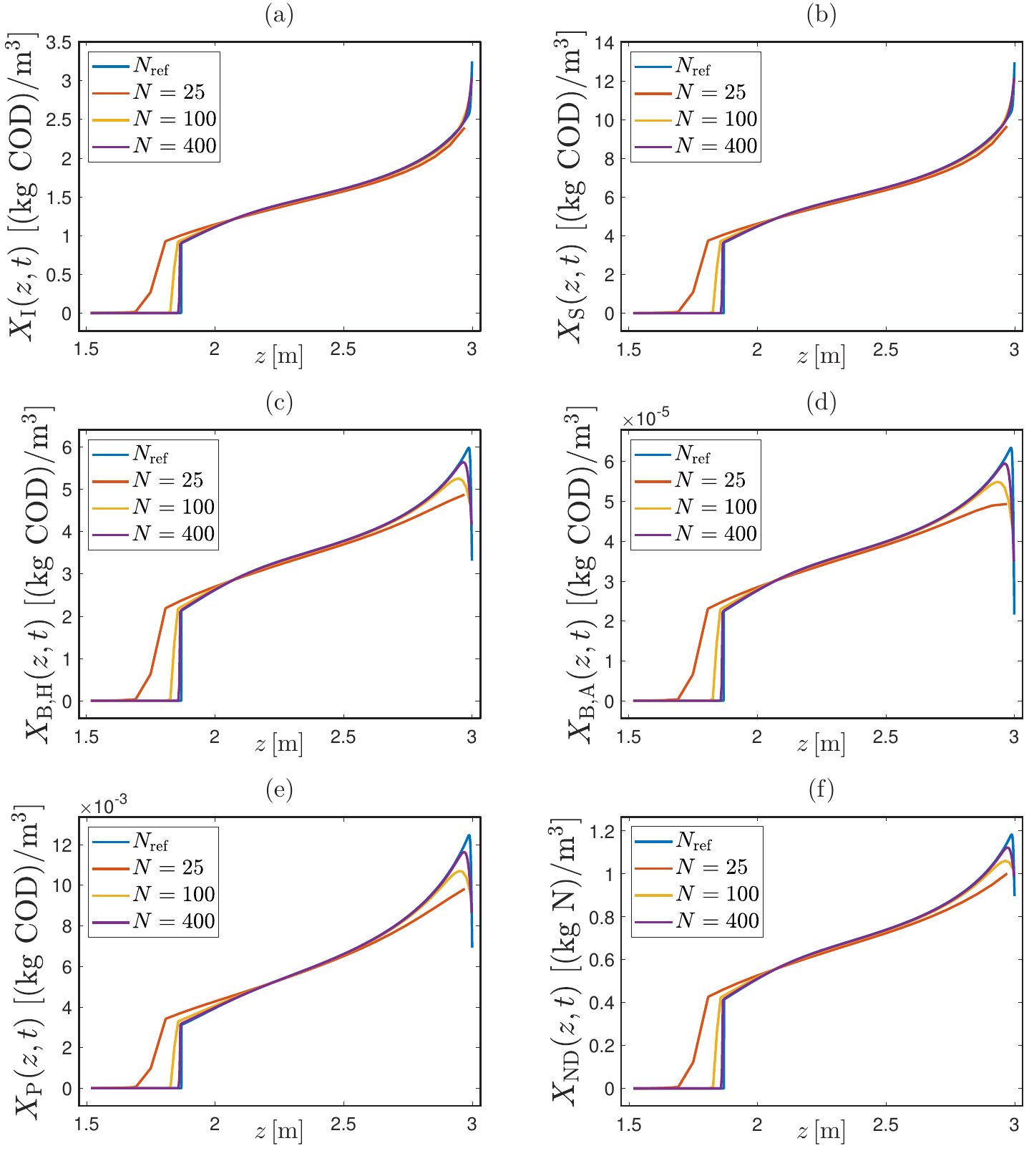} 
	\caption{Example~2:  simulated concentrations  (semi-implicit scheme) at $T=1 \, \mathrm{h}$   of 
	solid components: 
		(a) particulate inert organic matter, 			(b) slowly biodegradable substrate, (c) active heterotrophic biomass, 
			 (d)  active autotrophic biomass, (e) particle products from biomass decay, 
			  (f) particulate biodegradable organic nitrogen.} \label{fig:Perf3a} 
	\end{figure}%

\begin{figure}[t]
		\centering 
		\includegraphics{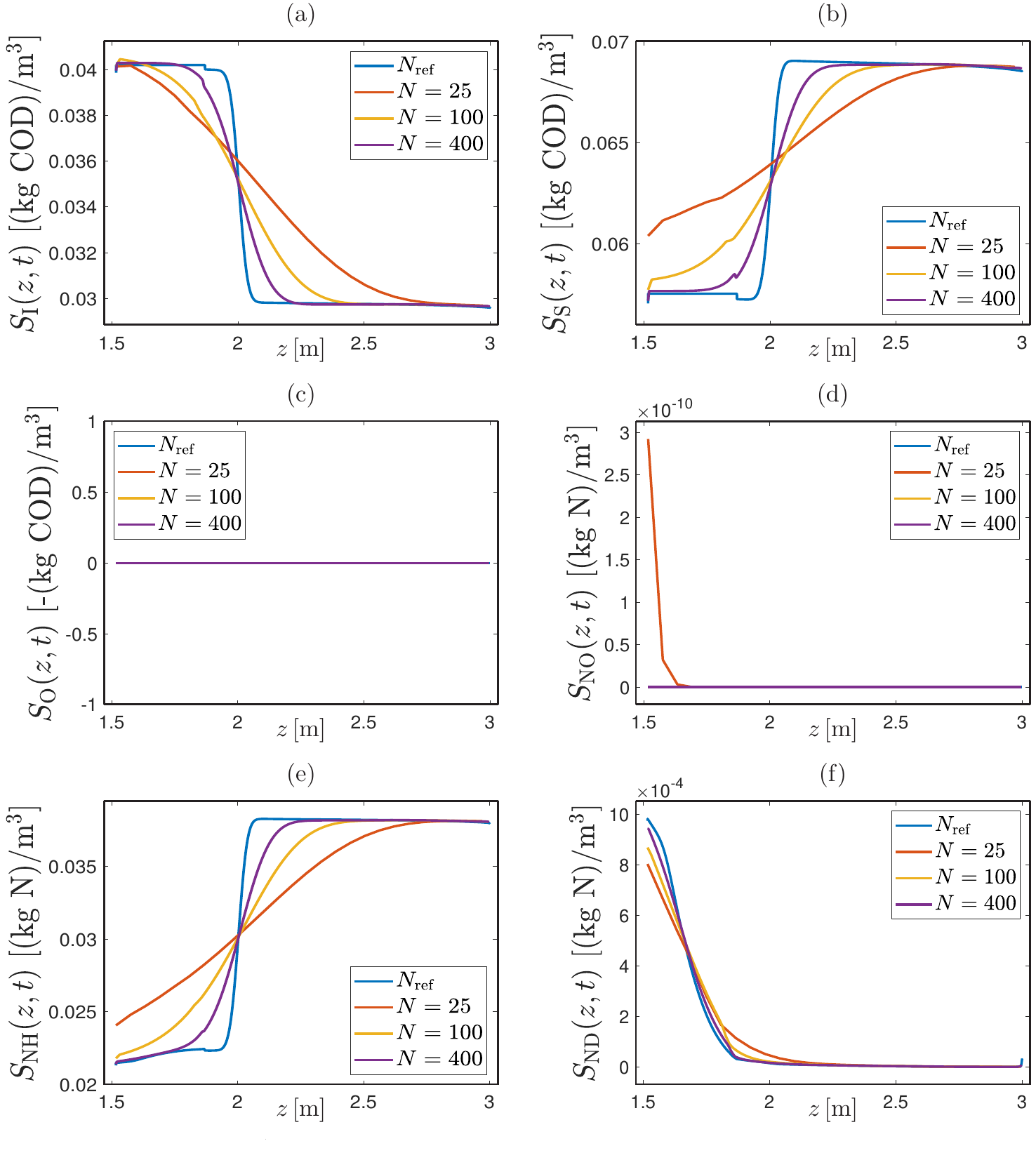}  
		\caption{Example~2: simulated concentrations (semi-implicit scheme)    at $T=1 \, \mathrm{h}$ 
	  of 
	soluble  components: (a) soluble inert organic matter, 
	(b) readily biodegradable substrate, 
	(c) oxygen, (d) nitrate and nitrite nitrogen, 
	(e)  $\mathrm{NH}_{4}^+ \ + \ \mathrm{NH}_3$ nitrogen, (f) soluble biodegradable organic nitrogen.} \label{fig:Perf3b}  
	\end{figure}%

The prime motivation of this example is to study  the numerical errors of the two new schemes defined for the PDE model and the scheme SBR2 of~\cite{SDAMM_SBR2}. To this end we 
 consider the scenario defined by Table~\ref{table:Example_Sol_Ref}, which refers to a shorter period of  total simulated time 
 ($ T = 1\,\mathrm{h}$) and does not include a react stage (for which the original SBR model, as studied in Example~1, stipulates
  a model by a system of ODEs). We calculated a reference  solution, using the same feed conditions as in Example~1 with $N = N_{\mathrm{ref}} \coloneqq 4800$.
This solution was obtained by the explicit scheme of Section~\ref{subsec:explicheme}.
The relative approximate numerical error
\begin{align*}
	e_N^{\mathrm{rel}}(t)\coloneqq \sum_{k=1}^{k_{\bC}} \frac{\|C_N^{(k)}-C_{N_{\mathrm{ref}}}^{(k)}(\cdot, t)\|_{L^1(0, B)}}{\|C_{N_{\mathrm{ref}}}^{(k)}(\cdot, t)\|_{L^1(0, B)}}+\sum_{k=1}^{k_{\bS}} \frac{\|S_N^{(k)}-S_{N_{\mathrm{ref}}}^{(k)}(\cdot, t)\|_{L^1(0, B)}}{\|S_{N_{\mathrm{ref}}}^{(k)}(\cdot, t)\|_{L^1(0, B)}}. 
\end{align*}
compares the approximate solution to the reference solution at a given time point~$t$ for a specific  number of cells~$N$.

In Figures~\ref{fig:Perf3a} and~\ref{fig:Perf3b}, we demonstrate the convergence of the numerical solutions, all produced with the semi-implicit scheme 
 and a tolerance  $\varepsilon =10^{-8}$, to the reference solution.  That  value was chosen 
  by previous experience. 
 The effect of the tolerance parameter~$\varepsilon$ itself is studied 
in Table~\ref{table:Epsilon}
 for simulations done with $N=50$,  $100$,  and $200$, 
  in terms of the   average number of iterations during the simulation, the relative error, and CPU  time. 
   It turns out that the relative error depends only marginally on the choice of~$\varepsilon$. As one should 
    expect, the average number of iterations (of the Newton-Raphson scheme, per time step), 
     as well as the CPU time for  the whole simulation,  
     consistently increase  with decreasing~$\varepsilon$.  Overall, it appears that 
      the relative error and CPU time, and therefore efficiency, does not depend critically 
       on~$\varepsilon$ (at least for the model functions and parameters chosen here). 
  
  For the same configuration, we compare in Table~\ref{table:errors_rel} and 
 Figure~\ref{fig:Error_rel} the relative errors and CPU times obtained by the three schemes SBR2 of 
  \cite{SDAMM_SBR2}, and the explicit and semi-implicit schemes of Sections~\ref{sec:numscheme} and~\ref{scheme:si}, respectively. 
   For both explicit and semi-implicit schemes, simulations were performed for $N=25,\dots,1600$ using the Engquist-Osher flux~\eqref{EOjphdef}. The semi-implicit scheme was calculated with the tolerance $\varepsilon=10^{-8}$.
   It  turns out that 
    with the exception of very coarse discretizations, the semi-implicit scheme 
     is  significantly more efficient in error reduction per CPU time than its 
      explicit counterpart due to its more favorable CFL condition, and is more 
       efficient also than SBR2 for sufficiently fine discretizations.  It is noteworthy that all 
        schemes converge to the same solution.

\begin{table}[t]
	\caption{Example~2: time functions for the simulation of the reference solution. \label{table:Example_Sol_Ref}}
	\smallskip 
	\centering
	{\small  \begin{tabular}{cccccc} \toprule
			Time period $[\mathrm{h}]$ & $X_\mathrm{f}(t) [\rm kg/m^3]$& $\Qf(t) [\rm m^3/h]$& $\Qu(t) [\rm m^3/h]$& $\Qe(t) [\rm m^3/h]$ & {Model} \\
			\midrule
			$0 \ \  \leq t<0.3$ & 5 & 2660 & 0& 0 & PDE\\
			$0.3 \ \leq t<0.85$ &0  &  0 & 0& 0 & PDE\\
			$0.85\leq t<0.95$ &0  &  0 & 0& 6000 & PDE\\ 
			$\hspace{-3.0mm}0.95\leq t<1$ & 0  &  0 & 100& 0 & PDE\\
			\bottomrule
	\end{tabular}}  
\end{table}%

\begin{table}[t]
		\caption{Example~2: effect of the tolerance $\varepsilon$ on the average number of iterations in the Newton-Raphson method, the errors $e_N^{\mathrm{rel}}$ and CPU times during a simulation of $T = 1\,\mathrm{h}$.
The errors are computed with the reference solution obtained by the explicit scheme with $N= 4800$ and the Godunov numerical flux.}  \label{table:Epsilon}
		\smallskip 
		\centering
			{\small \begin{tabular}{llcll} \toprule
								&$ \varepsilon$ & Avg. iterations  & $e_{N}^{\mathrm{rel}}(t)$  & CPU [$\mathrm{s}$]\\ \midrule 
			\multirow{12}{*}{$N=50$}	& 1E-1  &  1.00 &  0.0497756500328602 &  0.1250 \\ 
				& 1E-2  &  1.03 &  0.0497756500328602 &  0.1406 \\ 
				& 1E-3  &  1.60 &  0.0497679356628912 &  0.1797 \\ 
				& 1E-4  &  2.00 &  0.0497674399512266 &  0.1953 \\ 
				& 1E-5  &  2.01 &  0.0497674389897926 &  0.1875 \\ 
				& 1E-6  &  2.04 &  0.0497674143163951 &  0.1797 \\ 
				&1E-7  &  2.12 &  0.0497674123749513 &  0.1953 \\ 
				&1E-8  &  2.77 &  0.0497674121051164 &  0.2109 \\ 
				&1E-9  &  2.92 &  0.0497674120899535 &  0.2344 \\ 
				&1E-10 &  2.96 &  0.0497674120878119 &  0.2188 \\ 
				&1E-11 &  3.07 &  0.0497674120875128 &  0.2422 \\ 
				& 1E-12 &  3.69 &  0.0497674120874927 &  0.2560 \\ 
				\midrule
		\multirow{12}{*}{$N=100$}	&	1E-1  &  1.00 &  0.0244467141865713 &  0.3984 \\ 
			&	1E-2  &  1.01 &  0.0244467141865713 &  0.4062 \\ 
			&	1E-3  &  1.21 &  0.0244432794902836 &  0.4453 \\ 
			&	1E-4  &  2.00 &  0.0244427209345141 &  0.6719 \\ 
			&	1E-5  &  2.01 &  0.0244427236620445 &  0.5938 \\ 
			&	1E-6  &  2.02 &  0.0244427043886650 &  0.6172 \\ 
			&	1E-7  &  2.06 &  0.0244427032349412 &  0.6250 \\ 
			&	1E-8  &  2.17 &  0.0244427031258883 &  0.7188 \\ 
			&	1E-9  &  2.87 &  0.0244427030983648 &  0.7656 \\ 
			&	1E-10 &  2.97 &  0.0244427030958998 &  0.7812 \\ 
			&	1E-11 &  3.01 &  0.0244427030956931 &  0.7734 \\ 
			&	1E-12 &  3.11 &  0.0244427030956539 &  0.8672 \\ \midrule
		\multirow{12}{*}{$N=200$}
			&	1E-1  &  1.00 &  0.0115032639159754 &  1.4609 \\ 
			&	1E-2  &  1.00 &  0.0115032639159754 &  1.4766 \\ 
			&	1E-3  &  1.07 &  0.0115029463554388 &  1.6406 \\ 
			&	1E-4  &  1.86 &  0.0115009448742330 &  2.0234 \\ 
			&	1E-5  &  2.01 &  0.0115008746422637 &  2.2578 \\ 
			&	1E-6  &  2.02 &  0.0115008746587289 &  2.2188 \\ 
			&	1E-7  &  2.03 &  0.0115008741995477 &  2.1250 \\ 
			&	1E-8  &  2.08 &  0.0115008740687287 &  2.2656 \\ 
			&	1E-9  &  2.24 &  0.0115008740547202 &  2.3281 \\ 
			&	1E-10 &  2.91 &  0.0115008740528361 &  2.7812 \\ 
			&	1E-11 &  3.00 &  0.0115008740526303 &  2.7812 \\ 
			&	1E-12 &  3.03 &  0.0115008740526086 &  2.9609 \\ 
				\bottomrule
			\end{tabular}} 
	\end{table}%

\begin{table}[t]
	\caption{Example~2: errors $e_N^{\mathrm{rel}}$ and CPU times at simulated time $T=1\,$h. The errors have been computed with the reference solution obtained by explicit scheme with $N= 4800$ and the Godunov numerical flux. The abbreviation SBR2 refers to the scheme in~\cite{SDAMM_SBR2} (without variable transformation).}
	\label{table:errors_rel}
\smallskip 
\centering 
		{\small \begin{tabular}{@{}lrcrcrcr@{}} \toprule
		&	& \multicolumn{2}{c}{SBR2} &\multicolumn{2}{c}{Explicit} & \multicolumn{2}{c }{Semi-implicit} \\
			\midrule
		&	$N$ &  $e_{N}^{\mathrm{rel}}(t)$  & CPU $[\mathrm{s}]$ &  $e_{N}^{\mathrm{rel}}(t)$ & CPU $[\mathrm{s}]$  &  $e_{N}^{\mathrm{rel}}(t)$ & CPU $[\mathrm{s}]$    \\ \midrule 
	\multirow{7}{*}{$T=0.4\, \mathrm{h}$}	&	25   &  1.6483 &  0.0391 &  1.2053 &  0.0391 &  1.2099 &  0.0469 \\ 
		&		50   &  0.9230 &  0.0703 &  0.7688 &  0.1035 &  0.7732 &  0.0938 \\ 
		&		100  &  0.4704 &  0.2090 &  0.4368 &  0.3340 &  0.4414 &  0.2070 \\ 
		&		200  &  0.2551 &  0.7969 &  0.2384 &  1.7969 &  0.2416 &  0.5508 \\ 
		&		400  &  0.1352 &  5.2441 &  0.1261 & 11.9395 &  0.1286 &  2.0234 \\ 
		&		800  &  0.0743 & 40.9023 &  0.0645 & 89.9941 &  0.0665 &  7.4453 \\ 
		&		1600 &  0.0409 & 316.4102 &  0.0379 & 717.4668 &  0.0397 & 27.5156 \\ \midrule 
	\multirow{7}{*}{$T=0.75\, \mathrm{h}$}	&	25   &  1.5691 &  0.0900 &  1.2901 &  0.0920 &  1.2959 &  0.0880 \\ 
		&		50   &  0.8327 &  0.1320 &  0.7835 &  0.1590 &  0.7880 &  0.1360 \\ 
		&		100  &  0.4204 &  0.3240 &  0.4392 &  0.4590 &  0.4451 &  0.3190 \\ 
		&		200  &  0.2125 &  1.4766 &  0.2304 &  3.1797 &  0.2345 &  1.0234 \\ 
		&		400  &  0.1103 &  9.5469 &  0.1218 & 20.7266 &  0.1250 &  3.6797 \\ 
		&		800  &  0.0595 & 79.1719 &  0.0620 & 175.6562 &  0.0644 & 14.2188 \\ 
		&		1600 &  0.0327 & 646.8438 &  0.0371 & 1421.2190 &  0.0392 & 55.6470 \\ \midrule 
		\multirow{7}{*}{$T=1\, \mathrm{h}$} &		25   &  1.1617 &  0.0469 &  1.0747 &  0.0508 &  1.0573 &  0.0566 \\ 
		&		50   &  0.6580 &  0.0957 &  0.6966 &  0.1191 &  0.7078 &  0.1289 \\ 
		&		100  &  0.3760 &  0.3555 &  0.4519 &  0.6797 &  0.4627 &  0.3984 \\ 
		&		200  &  0.1877 &  1.9004 &  0.2821 &  4.0605 &  0.2919 &  1.3066 \\ 
		&		400  &  0.1075 & 12.4375 &  0.1658 & 27.0586 &  0.1737 &  4.8535 \\ 
		&		800  &  0.0644 & 104.6543 &  0.0896 & 237.3340 &  0.0966 & 18.8379 \\ 
		&		1600 &  0.0418 & 822.3809 &  0.0439 & 2362.8570 &  0.0495 & 75.5801 \\ 
			\bottomrule 
		\end{tabular}} 
\end{table}%

\begin{figure}[t]
\centering 
	\includegraphics{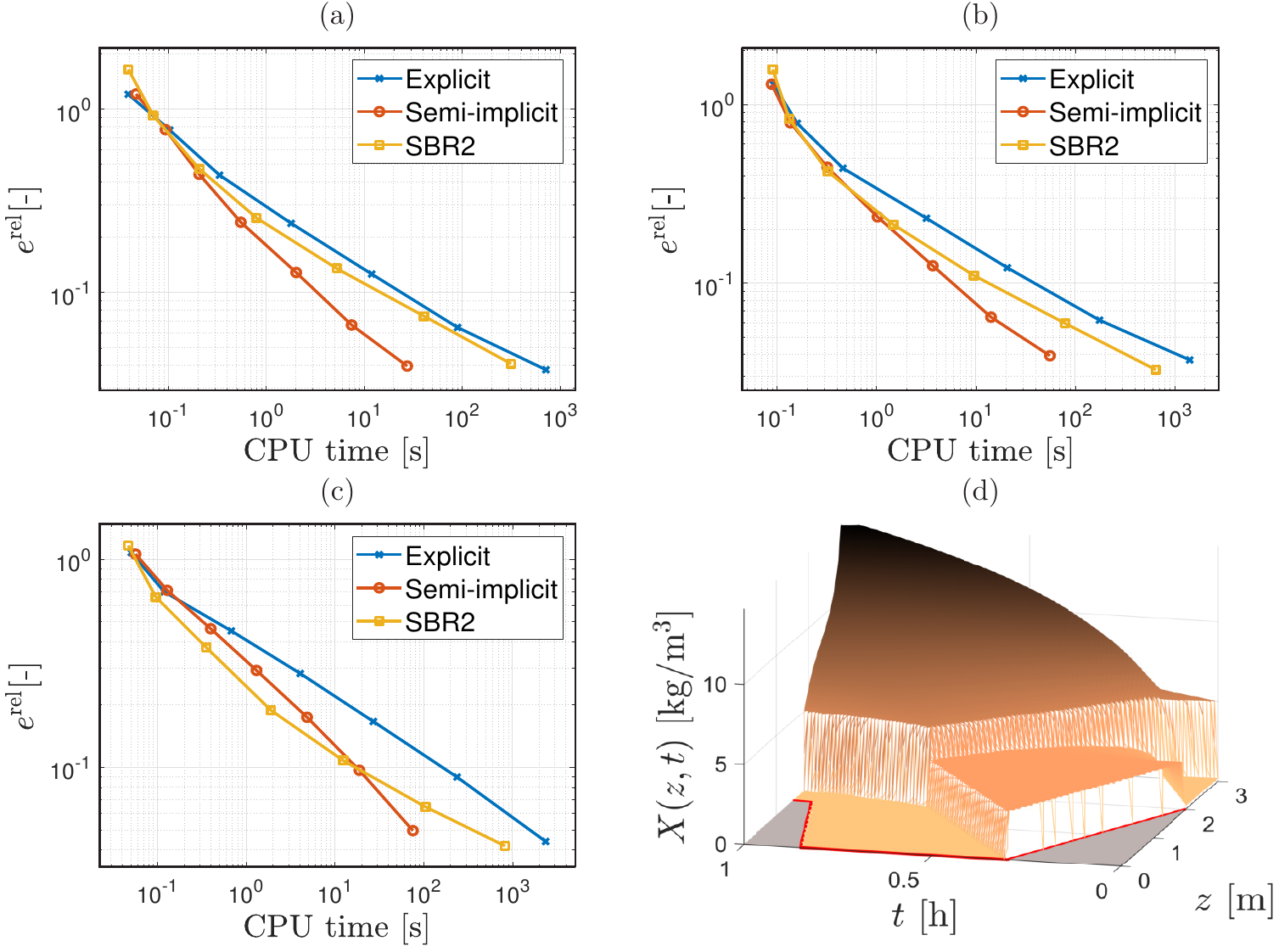} 
	\caption{Example~2: error versus CPU time for various values of~$N$ (Table~\ref{table:errors_rel})
	at simula times (a) $T=0.4 \, \mathrm{h}$, (b) $T=0.75 \, \mathrm{h}$ and (c) $T=1 \, \mathrm{h}$;  
(d) simulated total suspended solids concentration ($N=4800$) during $1\, \mathrm{h}$ (reference solution).}  \label{fig:Error_rel}
\end{figure}%

\subsection{Example~3}

We investigate the impact of a moving mesh with three different spatial grids, compared to a fixed grid; see Figure~\ref{fig:EfecMovMesh}. The simulation conditions are given in Table~\ref{table:Example3}.
The semi-implicit scheme is used with the Engquist-Osher flux and $\varepsilon =10^{-8}$. The data  in Table~\ref{table:Example3}, which contemplate no solids feed or discharge but a moving boundary, have  been chosen in such a way that the solution converges quickly in time to a stationary one with a layer of sediment at the bottom produced by the settling 
 of the initially homogeneous suspension that is not affected by the moving boundary. Figures~\ref{fig:EfecMovMesh}~(a), (c) and~(e) show that the 
  numerical solution of the SBR2 scheme reproduces this property at any of the 
   chosen discretizations $N=100$, $N=200$ or $N=400$ while the solution produced by the semi-implicit scheme is not stationary due 
    to the effect of the moving mesh. The deviation from a stationary solution seems, however, to decrease with increasing~$N$.

\begin{table}[t]
	\caption{Example~3: Time functions for the simulation. \label{table:Example3}}
	\smallskip 
	\centering
	{\small  \begin{tabular}{cccccc} \toprule
			Time period $[\mathrm{h}]$ & $\Xf(t) [\rm kg/m^3]$& $\Qf(t) [\rm m^3/h]$& $\Qu(t) [\rm m^3/h]$& $\Qe(t) [\rm m^3/h]$ & {Model} \\
			\midrule
			$0  \leq t<25$  & 0  &  0  & 0  & 0    & PDE\\
			$25  \leq t<35$ & 0  &  0  & 0  & 84    & PDE\\
			$35  \leq t<45$ & 0  &  0  & 0  & 0    & PDE\\
			$45  \leq t<60$ & 0  &  40  & 0  & 0    & PDE\\ 
			$60\leq t<70$   & 0  &  0  & 0  & 0    & PDE\\
			\bottomrule
	\end{tabular}}
\end{table}%

\begin{figure}[t]
\centering 
	\includegraphics{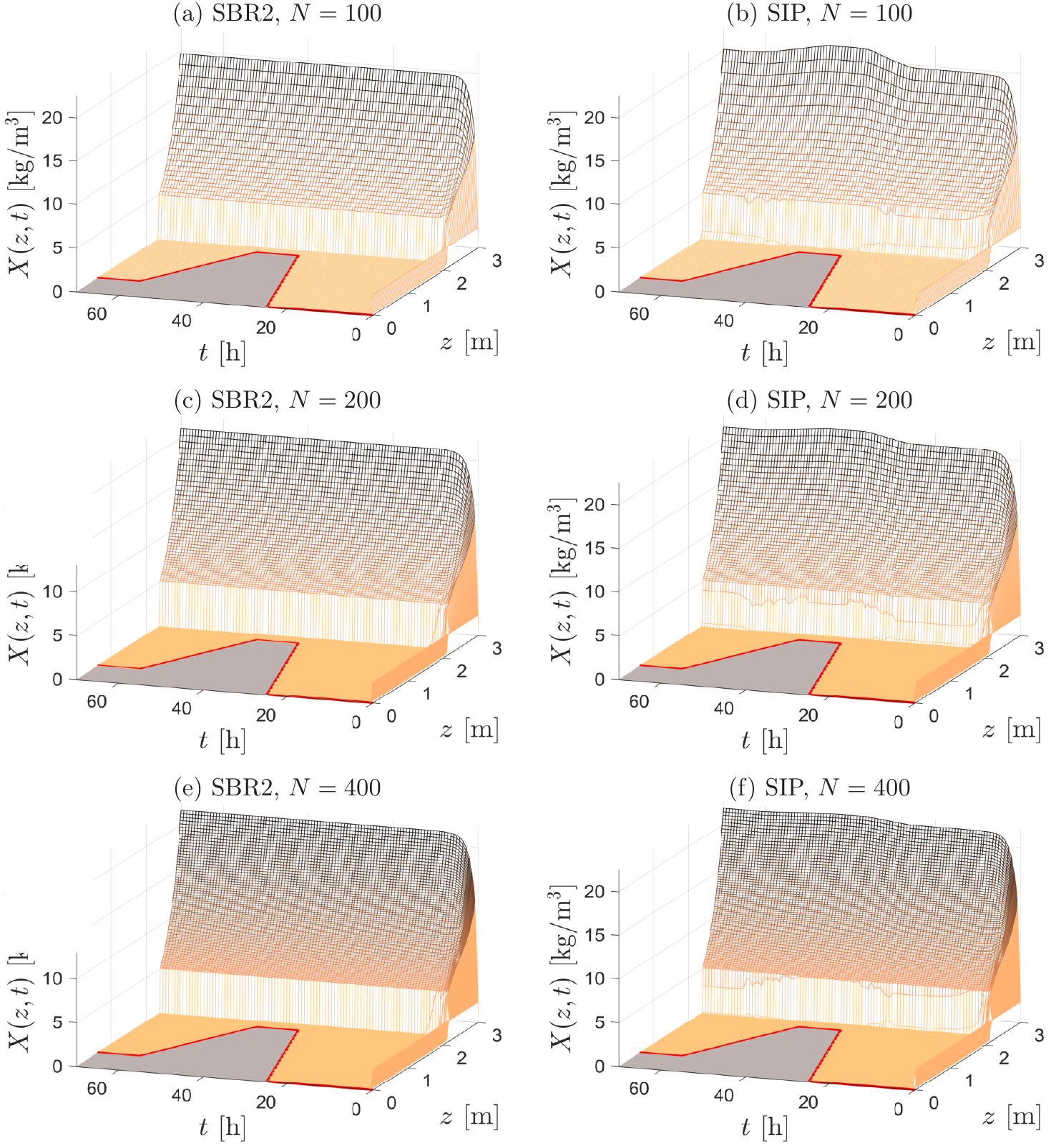} 
	\caption{Example~3: simulations of the scenario of Table~\ref{table:Example3} with  the indicated schemes and
	 values of~$N$. The fixed-mesh scheme SBR2 reproduces a steady-state  profile while numerical solutions 
	  by scheme SIP are affected by the moving mesh.}  \label{fig:EfecMovMesh}
\end{figure}%

\section{Conclusions} \label{sec:conc} 

The numerical results demonstrate that the two versions of the numerical scheme presented herein,  the explicit one of 
 Section~\ref{subsec:explicheme} and the semi-implicit one of Section~\ref{scheme:si}, generate physically relevant solutions (as is expressed by the invariant region principle) and are  working alternatives 
  to the scheme SBR2 in~\cite{SDAMM_SBR2} of 
  the PDE model advanced in \cite{SDAMM_SBR1}. Moreover, as Example~2 clearly illustrates, there is a substantial gain in CPU times and 
   efficiency of the semi-implicit version if compared with the explicit version. This gain is of course due to the 
    more favorable CFL condition~\eqref{CFLSI} (for the semi-implicit scheme) 
     that permits a larger time step than \eqref{eq:CFL}  (for the explicit scheme). The degree of this advantage depends, of course, on 
      the constitutive functions and parameters used, so it is important to emphasize that it appears for choices of these 
       constitutive ingredients that are largely considered typical for activated sludge \cite{SDcace_reactive,SDwatres_Torfs2017,Burger2023,kirimpaper}. 

The SBR2 scheme of~\cite{SDAMM_SBR2} is based on a fixed grid with respect to~$z$, and the moving interface~$\bar{z} (t)$ 
needs to be tracked explicitly. This movement gives rise to a number of  cases of boundary cells of time-dependent size 
 that need to be handled separately. That treatment is fairly involved and the  approach presented in the
   present work is in general easier to implement, and for the semi-implicit variant turns out to be more efficient. 
    That said, we mention that the fixed-grid approach of~\cite{SDAMM_SBR2} 
    has certain advantages in situations when the method is required to leave a consolidated 
     bed invariant under movements of~$\bar{z} (t)$ above it (as is illustrated by Example~3), and can be 
     extended more easily to additional inlets or outlets in the reactor. 
It is therefore 
      interesting to note that the observed convergence of the three schemes discussed 
       %(the fixed-grid SBR2 of~\cite{SDAMM_SBR2} and the explicit and semi-implicit ones of the present work) 
        to the same solution, as evidenced by Table~\ref{table:errors_rel} and Figure~\ref{fig:Error_rel}~(a), 
         supports that the treatment in~\cite{SDAMM_SBR2} is correct.   
    
With respect to limitations, we mention that the approach of \cite{SDAMM_SBR2} involves a vessel with variable cross-sectional area; this model 
ingredient has been left out here for notational convenience and  is not  expected to make the formulation and analysis significantly more difficult. We also note that while it has been proven that the schemes are monotone and obey an invariant-region principle and numerical evidence of convergence has been 
 presented, a rigorous convergence proof is still missing but can possibly be obtained by combining analyses of 
  weakly coupled degenerate parabolic systems~\cite{holden03}, numerical methods for zero-flux boundary value problems~\cite{Burger&C&S2006,kt17}, 
   and studies of triangular systems of conservation laws and degenerate convection-diffusion equations~\cite{bdmv22,bdmv23,coc09}.  
   (In the latter reference,  convergence proofs decisively depend on usage of the Engquist-Osher flux with its separable 
    upwind and downwind contributions; this has partly motivated usage of that particular numerical flux in the present work.)

Several extensions, improvements  and applications of the  model and numerical methods are conceivable. For instance, both scheme versions
 are only first-order accurate in space and time; this order can possibly be improved by applying more sophisticated 
  variable extrapolation or maximum-principle-preserving 
  weighted essentially non-oscillatory (WENO) techniques    (cf., e.g., \cite{jiang13} and references cited in that work)   
  combined with changing between the implicit and explicit steps in a more involved manner as is done 
   in implicit-explicit (IMEX) schemes, cf., e.g., \cite{bosc15}. On the other hand, one can think of an SBR 
    whose bottom works as a filter or a filtration cell with chemical reactions. In 
     such applications, the trajectory of~$\bar{z}(t)$ is no longer prescibed but arises from an 
      applied pressure, and is balanced by the hydraulic resistance of the filter medium plus that of the 
       forming sediment (filter cake). The composition of the latter is part of the solution, 
        so that possible extension forms a free-boundary problem \cite{bck01,bfk02}.  
        
       We emphasize that within the present approach soluble components are supposed to be transported passively with the fluid and are 
       subject to the  reaction terms. In some of the figure this property causes fairly sharp concentration profiles 
       when one expects that the evolution of solute  concentrations should be subject to additional effects such as dispersion/diffusion.
        In fact, diffusion driven by the gradient of the respective concentration was the unique mechanism of 
           spatial propagation of solutes considered in our first effort of modelling reactive settling 
            by PDEs \cite{SDcace_reactive}. It would not be a problem to include diffusion to the 
             present approach as an additional mechanism of solute transport, with the effect to be an overall 
              blurring of profiles.

\section*{Acknowledgements}
RB and JC are supported by ANID (Chile) through project Centro de Modelamiento Matem\'atico (BASAL projects ACE210010 and FB210005). In addition, RB is supported by Fondecyt project 1210610; Anillo project ANID/PIA/ACT210030; and CRHIAM, project ANID/FONDAP/15130015. SD acknowledges support from the Swedish Research Council (Vetenskapsr{\aa}det 2019-04601). RP is supported by ANID scholarship ANID-PCHA/Doctorado Nacional/2020-21200939.

\bibliographystyle{abbrv}
\bibliography{ref_copy}

\begin{thebibliography}{10}

\bibitem{Amin2014}
M.~M. Amin, M.~H. {Khiadani (Hajian)}, A.~Fatehizadeh, and E.~Taheri.
\newblock Validation of linear and non-linear kinetic modeling of saline
  wastewater treatment by sequencing batch reactor with adapted and non-adapted
  consortiums.
\newblock {\em Desalination}, 344:228--235, July 2014.

\bibitem{bosc15}
S.~Boscarino, R.~B\"{u}rger, P.~Mulet, G.~Russo, and L.~M. Villada.
\newblock Linearly implicit {IMEX} {R}unge-{K}utta methods for a class of
  degenerate convection-diffusion problems.
\newblock {\em SIAM J. Sci. Comput.}, 37(2):B305--B331, 2015.

\bibitem{SDIMA_MOL}
R.~B\"urger, J.~Careaga, and S.~Diehl.
\newblock A method-of-lines formulation for a model of reactive settling in
  tanks with varying cross-sectional area.
\newblock {\em IMA J. Appl. Math.}, 86(3):514--546, 2021.

\bibitem{SDcace_reactive}
R.~B{\"u}rger, J.~Careaga, S.~Diehl, C.~Mej\'{i}as, I.~Nopens, E.~Torfs, and
  P.~A. Vanrolleghem.
\newblock Simulations of reactive settling of activated sludge with a reduced
  biokinetic model.
\newblock {\em Computers Chem. Eng.}, 92:216--229, 2016.

\bibitem{SDAMM_SBR1}
R.~B\"{u}rger, J.~Careaga, S.~Diehl, and R.~Pineda.
\newblock A moving-boundary model of reactive settling in wastewater treatment.
  {P}art~1: Governing equations.
\newblock {\em Appl. Math. Modelling}, 106:390--401, 2022.

\bibitem{SDAMM_SBR2}
R.~B\"{u}rger, J.~Careaga, S.~Diehl, and R.~Pineda.
\newblock A moving-boundary model of reactive settling in wastewater treatment.
  {P}art~2: {N}umerical scheme.
\newblock {\em Appl. Math. Modelling}, 111:247--269, 2022.

\bibitem{Burger2023}
R.~B\"{u}rger, J.~Careaga, S.~Diehl, and R.~Pineda.
\newblock A model of reactive settling of activated sludge: Comparison with
  experimental data.
\newblock {\em Chem.~Eng.~Sci.}, 267:118244, Mar. 2023.

\bibitem{bck01}
R.~B\"{u}rger, F.~Concha, and K.~H. Karlsen.
\newblock Phenomenological model of filtration processes: 1.~{C}ake formation
  and expression.
\newblock {\em Chem. Eng. Sci.}, 56:4537--4553, 2001.

\bibitem{Burger&C&S2006}
R.~B\"{u}rger, A.~Coronel, and M.~Sep\'{u}lveda.
\newblock A semi-implicit monotone difference scheme for an initial-boundary
  value problem of a strongly degenerate parabolic equation modeling
  sedimentation-consolidation processes.
\newblock {\em Math. Comp.}, 75(253):91--112, 2006.

\bibitem{bdmv22}
R.~B\"{u}rger, S.~Diehl, M.~C. Mart\'{\i}, and Y.~V\'{a}squez.
\newblock A degenerating convection-diffusion system modelling froth flotation
  with drainage.
\newblock {\em IMA J. Appl. Math.}, 87(6):1151--1190, 2022.

\bibitem{bdmv23}
R.~B\"{u}rger, S.~Diehl, M.~C. Mart\'{\i}, and Y.~V\'{a}squez.
\newblock A difference scheme for a triangular system of conservation laws with
  discontinuous flux modeling three-phase flows.
\newblock {\em Netw. Heterog. Media}, 18:140--190, 2023.

\bibitem{SDm2an_reactive}
R.~B\"{u}rger, S.~Diehl, and C.~Mej{\'{\i}}as.
\newblock A difference scheme for a degenerating convection-diffusion-reaction
  system modelling continuous sedimentation.
\newblock {\em {ESAIM}: Math. Modelling Num. Anal.}, 52(2):365--392, 2018.

\bibitem{SDwatres3}
R.~B{\"u}rger, S.~Diehl, and I.~Nopens.
\newblock A consistent modelling methodology for secondary settling tanks in
  wastewater treatment.
\newblock {\em Water Res.}, 45(6):2247--2260, 2011.

\bibitem{bfk02}
R.~B\"{u}rger, H.~Frid, and K.~H. Karlsen.
\newblock On a free boundary problem for a strongly degenerate quasi-linear
  parabolic equation with an application to a model of pressure filtration.
\newblock {\em SIAM J. Math. Anal.}, 34(3):611--635, 2002.

\bibitem{Burger&K&T2005a}
R.~B{\"u}rger, K.~H. Karlsen, and J.~D. Towers.
\newblock A model of continuous sedimentation of flocculated suspensions in
  clarifier-thickener units.
\newblock {\em SIAM J.\ Appl.\ Math.}, 65:882--940, 2005.

\bibitem{Caluw2016}
M.~Caluw{\'{e}}, D.~Daens, R.~Blust, L.~Geuens, and J.~Dries.
\newblock The sequencing batch reactor as an excellent configuration to treat
  wastewater from the petrochemical industry.
\newblock {\em Water Sci.\ Tech.}, 75(4):793--801, Dec. 2016.

\bibitem{chen2020book}
G.~Chen, M.~C.~M. {van Loosdrecht}, G.~A. Ekama, and D.~Brdjaniovic.
\newblock {\em Biological {W}astewater {T}reatment}.
\newblock IWA Publishing, London, UK, second edition, 2020.

\bibitem{coc09}
G.~M. Coclite, S.~Mishra, and N.~H. Risebro.
\newblock Convergence of an {E}ngquist-{O}sher scheme for a multi-dimensional
  triangular system of conservation laws.
\newblock {\em Math. Comp.}, 79:71--94, 2010.

\bibitem{deim85}
K.~Deimling.
\newblock {\em Nonlinear {F}unctional {A}nalysis}.
\newblock Springer-Verlag, Berlin, 1985.

\bibitem{SDmathmeth1}
S.~Diehl.
\newblock Continuous sedimentation of multi-component particles.
\newblock {\em Math. Meth. Appl. Sci.}, 20:1345--1364, 1997.

\bibitem{droste}
R.~Droste and R.~Gear.
\newblock {\em Theory and Practice of Water and Wastewater Treatment}.
\newblock Wiley, Hoboken, NJ, USA, 2nd edition, 2019.

\bibitem{eopaper}
B.~Engquist and S.~Osher.
\newblock One-sided difference approximations for nonlinear conservation laws.
\newblock {\em Math. Comp.}, 36(154):321--351, 1981.

\bibitem{ekhyp2000}
S.~Evje and K.~H. Karlsen.
\newblock Degenerate convection-diffusion equations and implicit monotone
  difference schemes.
\newblock In {\em Hyperbolic {P}roblems: {T}heory, {N}umerics, {A}pplications,
  {V}ol. {I} ({Z}\"{u}rich, 1998)}, volume 129 of {\em Internat. Ser. Numer.
  Math.}, pages 285--294. Birkh\"{a}user, Basel, 1999.

\bibitem{eymard00}
R.~Eymard, T.~Gallou\"{e}t, and R.~Herbin.
\newblock Finite volume methods.
\newblock In {\em Handbook of {N}umerical {A}nalysis, {V}ol. {VII}}, Handb.
  Numer. Anal., VII, pages 713--1020. North-Holland, Amsterdam, 2000.

\bibitem{Henze2000ASMbook}
M.~Henze, W.~Gujer, T.~Mino, and M.~C.~M. van Loosdrecht.
\newblock {\em Activated Sludge Models {ASM1}, {ASM2}, {ASM2d} and {ASM3}}.
\newblock IWA Publishing, London, UK, 2000.
\newblock IWA Scientific and Technical Report No. 9.

\bibitem{holden03}
H.~Holden, K.~H. Karlsen, and N.~H. Risebro.
\newblock On uniqueness and existence of entropy solutions of weakly coupled
  systems of nonlinear degenerate parabolic equations.
\newblock {\em Electron. J. Differential Equations}, page~46, 2003.

\bibitem{Hu2005}
Z.~Hu, R.~A. Ferraina, J.~F. Ericson, A.~A. MacKay, and B.~F. Smets.
\newblock Biomass characteristics in three sequencing batch reactors treating a
  wastewater containing synthetic organic chemicals.
\newblock {\em Water Res.}, 39(4):710--720, Feb. 2005.

\bibitem{jiang13}
Y.~Jiang and Z.~Xu.
\newblock Parametrized maximum principle preserving limiter for finite
  difference {WENO} schemes solving convection-dominated diffusion equations.
\newblock {\em SIAM J. Sci. Comput.}, 35(6):A2524--A2553, 2013.

\bibitem{kt17}
K.~H. Karlsen and J.~D. Towers.
\newblock Convergence of monotone schemes for conservation laws with zero-flux
  boundary conditions.
\newblock {\em Adv. Appl. Math. Mech.}, 9(3):515--542, 2017.

\bibitem{kirimpaper}
G.~Kirim, E.~Torfs, and P.~.~A. Vanrolleghem.
\newblock {A}n improved 1d reactive {B}\"{u}rger–{D}iehl settler model for
  secondary settling tank denitrification.
\newblock {\em Water Environ. Res.}, 94:e10825, 2022.

\bibitem{makinia20}
J.~Makinia and E.~Zaborowska.
\newblock {\em Mathematical Modelling and Computer Simulation of Activated
  Sludge Systems}.
\newblock IWA Publishing, London, UK, 2nd edition, 2020.

\bibitem{metcalf}
L.~Metcalf and H.~P. Eddy.
\newblock {\em Wastewater Engineering. Treatment and Resource Recovery}.
\newblock McGraw-Hill, New York, USA, 5th edition, 2014.

\bibitem{Popple2016}
T.~Popple, J.~Williams, E.~May, G.~Mills, and R.~Oliver.
\newblock Evaluation of a sequencing batch reactor sewage treatment rig for
  investigating the fate of radioactively labelled pharmaceuticals: Case study
  of propranolol.
\newblock {\em Water Res.}, 88:83--92, Jan. 2016.

\bibitem{Song2021}
B.~Song, Z.~Tian, R.~van~der Weijden, C.~Buisman, and J.~Weijma.
\newblock High-rate biological selenate reduction in a sequencing batch reactor
  for recovery of hexagonal selenium.
\newblock {\em Water Res.}, 193:116855, Apr. 2021.

\bibitem{SDwatres_Torfs2017}
E.~Torfs, S.~Balemans, F.~Locatelli, S.~Diehl, R.~B\"urger, J.~Laurent,
  P.~Fran\c{c}ois, and I.~Nopens.
\newblock On constitutive functions for hindered settling velocity in 1-d
  settler models: Selection of appropriate model structure.
\newblock {\em Water Res.}, 110:38--47, 2017.

\bibitem{Wang2011}
S.~Wang and C.~K. Gunsch.
\newblock Effects of selected pharmaceutically active compounds on treatment
  performance in sequencing batch reactors mimicking wastewater treatment
  plants operations.
\newblock {\em Water Res.}, 45(11):3398--3406, May 2011.

\end{thebibliography}

\appendix

\section{A modified ASM1 model}\label{app}

\begin{table}[t]
\caption{The variables in the modified ASM1 model.}
\label{table:AMS1_vari}  
\smallskip 
\centering 
{\small \begin{tabular}{lll} \toprule
 \multicolumn{1}{l} { Material } & Notation & Unit \\
\midrule
Particulate inert organic matter 			   & $X_{\rm I}$  & $\rm (kg \ COD)\,m^{-3}$\\
 Slowly biodegradable substrate less part.\ organic nitrogen	   & $X_{\rm S-ND}$  & $\rm (kg \ COD)\,m^{-3}$\\
 Active heterotrophic biomass					   & $X_{\rm B, H}$& $\rm (kg \ COD)\,m^{-3}$\\
 Active autotrophic biomass 					   & $X_{\rm B, A}$& $\rm (kg \ COD)\,m^{-3}$\\
 Particulate products arising from biomass decay & $X_{\rm P}$  & $\rm (kg \ COD)\,m^{-3}$\\
 Particulate biodegradable organic nitrogen      & $X_{\rm ND}$ & $\rm (kg \ N)\,m^{-3}$\\
 Soluble inert organic matter 				   & $S_{\rm I}$  & $\rm (kg \ COD)\,m^{-3}$\\
 Readily biodegradable substrate 				   & $S_{\rm S}$  & $\rm (kg \ COD)\,m^{-3}$\\
 Oxygen 										   & $S_{\rm O}$  & $\rm -(kg\ COD)\,m^{-3}$\\
 Nitrate and nitrite nitrogen 				   & $S_{\rm NO}$ & $\rm (kg\ N)\,m^{-3}$\\
 $\mathrm{NH}_{4}^{+}+\mathrm{NH}_{3}$ nitrogen  & $S_{\rm NH}$ & $\rm (kg\ N)\,m^{-3}$\\
 Soluble biodegradable organic nitrogen 		   & $S_{\rm ND}$ & $\rm (kg\ N)\,m^{-3}$\\
\bottomrule
\end{tabular}} 
\end{table}%

\begin{table}[t]
\caption{Stoichiometric and kinetic parameters.} \label{table:Name_para}
\smallskip 
\centering 
{\small 
\begin{tabular}{lp{9.2cm}lp{35mm}} \toprule
 Symbol            & Name & Value & Unit \\ 
\midrule $Y_{\rm A}$   & Yield for autotrophic biomass & 0.24 & $\rm (g\,COD)(g\,N)^{-1}$ \\
 $Y_{\rm H}$   & Yield for heterotrophic biomass & 0.67 & $\rm  (g\,COD)(g\,COD)^{-1}$ \\
 $f_{\rm P}$   & Fraction of biomass leading to particulate products & 0.08 & dimensionless\\
 $i_{\rm XB}$  & Mass of nitrogen per mass of COD in biomass & 0.086 & $\rm (g\,N)(g\,COD)^{-1}$  \\
 $i_{\rm XP}$  & Mass of nitrogen per mass of COD in products from biomass & 0.06 & $\rm (g\,N)(g\,COD)^{-1}$ \\
 $\mu_{\rm H}$ & Maximum specific growth rate for heterotrophic biomass & 6.0 & $\rm d^{-1}$ \\
 $K_{\rm S}$   & Half-saturation coefficient for heterotrophic biomass & 20.0 & $\rm (g\,COD)\,m^{-3}$ \\
 $K_{\rm O, H}$& Oxygen half-saturation coefficient for heterotrophic biomass & 0.2 & $\rm -(g\,COD)\,m^{-3}$ \\
 $K_{\rm NO}$  & Nitrate half-saturation coefficient for denitrifying heterotrophic biomass & 0.5 & $\rm (g\,NO_{3}\text{-}N)\, m^{-3}$ \\
 $b_{\rm H}$   & Decay coefficient for heterotrophic biomass & 0.62 & $\rm d^{-1}$ \\
 $\eta_{\rm g}$& Correction factor for $\mu_{\rm H}$ under anoxic conditions & 0.8 & dimensionless \\
 $\eta_{\rm h}$& Correction factor for hydrolysis under anoxic conditions & 0.4 & dimensionless \\
 $k_{\rm h}$   & Maximum specific hydrolysis rate & 3.0 & $\rm (g\,\text{COD})\, (g\,\text{COD})^{-1}\rm d^{-1}$ \\
 $K_{\rm X}$   & Half-saturation coefficient for hydrolysis of slowly biodegradable substrate & 0.03 & $\rm (g\,\text{COD})(g\,\text{COD})^{-1}$ \\ 
 $\mu_{\rm A}$ & Maximum specific growth rate for autotrophic biomass & 0.8 & $\rm d^{-1}$ \\
 $\bar{K}_{\rm NH}$ & Ammonia half-saturation coefficient for aerobic and anaerobic growth of heterotrophs & 0.05 & $\rm (g\,NH_{3}\text{-}N)\, m^{-3}$ \\
 $K_{\rm NH}$ & Ammonia half-saturation coefficient for autotrophic biomass & 1.0 & $\rm (g\,NH_{3}\text{-}N)\, m^{-3}$\\
 $b_{\rm A}$   & Decay coefficient for autotrophic biomass & 0.15 & $\rm d^{-1}$ \\
 $K_{\rm O, A}$& Oxygen half-saturation coefficient for autotrophic biomass & 0.4 & $\rm -(g\,COD)\,m^{-3}$\\
 $k_{\rm a}$   & Ammonification rate & 0.08 & $\rm m^{3}(g\, COD)^{-1}d^{-1}$  \\
\bottomrule
\end{tabular}}  
\end{table}%

The ASM1 model is described in, for example, ~\cite{Henze2000ASMbook}.
For the reason of reformulating the PDE model to include percentages of the particulate concentrations, we have redefined the second component from $X_{\mathrm{S}}$ to $X_{\mathrm{S-ND}}\coloneqq X_{\mathrm{S}}-X_{\mathrm{ND}}$.
Then the variables are
\begin{align*}
\bC =\big(X_\mathrm{I}, X_\mathrm{S-ND}, X_\mathrm{B,H}, X_\mathrm{B,A}, X_\mathrm{P}, X_{\rm ND}\big)^\rmT,\quad 
\bS =\big(S_\mathrm{I},S_\mathrm{S},S_\mathrm{O},S_\mathrm{NO},S_\mathrm{NH},S_\mathrm{ND}\big)^\rmT,
\end{align*}
with units given in Table~\ref{table:AMS1_vari}.
The stoichiometric matrices of the modified ASM1 are given by 
\begin{align*} 
\bsigmaC \coloneqq  \begin{bmatrix}
0&0&0&0&0&0&0&0 \\
0&0&0& 1-f_{\rm P}(1+i_{\rm XP})-i_{\rm XB} & 1-f_{\rm P}(1+i_{\rm XP})-i_{\rm XB} & 0 & -1 & 1 \\
1 & 1 & 0 & -1 & 0 & 0 & 0 & 0 \\
0 & 0 & 1 & 0 & -1 & 0&0&0 \\
0 & 0 & 0 & f_{\rm P} & f_{\rm P} & 0 & 0 & 0 \\
0 & 0 & 0 & i_{\rm XB}-f_{\rm P}i_{\rm XP} & i_{\rm XB}-f_{\rm P}i_{\rm XP} & 0 & 0 & -1
                  \end{bmatrix}
\end{align*} 
for the solid components and 
\begin{align*}                   
\bsigmaS \coloneqq  \begin{bmatrix}
0&0&0&0&0&0&0&0 \\
-1/Y_{\rm H}  & -1 / Y_{\rm H} &0&0&0&0& 1 & 0\\ 
- (1-Y_{\rm H})/Y_{\rm H}  & 0 & - (4.57-Y_{\rm A})/Y_{\rm A}  &0&0&0&0&0 \\ 
0& - (1-Y_{\rm H})/(2.86Y_{\rm H}) & 1 / Y_{\rm A}  &0&0&0&0&0 \\ 
-i_{\rm XB} & -i_{\rm XB} & -i_{\rm XB}- 1 / Y_{\rm A} &0&0& 1 & 0 & 0 \\ 
0&0&0&0&0& -1 & 0 & 1
                  \end{bmatrix}
\end{align*}
for the substrates,  where the constants are given in Table~\ref{table:Name_para}
and the vector of reaction rates is 
\begin{equation*}
\br(\bC,\bS)\coloneqq 
\begin{pmatrix}
\mu_{\rm H}  \mu (S_{\rm N H},  \bar{K}_{\rm N H}) \mu ( S_{\rm S}, K_{\rm S}) \mu(  S_{\rm O},  K_{\rm O, H} ) X_{\rm B, H}\\[2pt]  
\mu_{\rm H}  \mu (S_{\rm N H},  \bar{K}_{\rm N H}) \mu(  S_{\rm S},  K_{\rm S}) \mu ( K_{\rm O, H},  S_{\rm O})  
 \mu ( S_{\rm N O},  K_{\rm N O} ) \eta_{\rm g} X_{\rm B, H}\\[2pt]
\mu_{\rm A} \mu( S_{\rm N H}, K_{\rm N H}) \mu (   S_{\rm O},  K_{\rm O, A})  X_{\rm B, A}\\[2pt]  
b_{\rm H} X_{\rm B, H}\\[2pt]
b_{\rm A} X_{\rm B, A}\\[2pt] 
k_{\rm a} S_{\rm N D} X_{\rm B, H}\\[2pt]
k_{\rm h} \mu_7(X_{\rm S},X_{\rm B, H})\bigl( \mu(S_{\rm O},  K_{\rm O, H}) +\eta_{\rm h} \mu(  K_{\rm O, H},  S_{\rm O}) 
 \mu ( S_{\rm N O},  K_{\rm N O}) \bigr)\\[2pt] 
k_{\rm h} \mu_8(X_{\rm B, H},X_{\rm N D})\bigl(  \mu(S_{\rm O}, K_{\rm O, H}) +\eta_{\rm h} \mu(K_{\rm O, H},S_{\rm O}) 
 \mu( S_{\rm N O}, K_{\rm N O})\bigr)
\end{pmatrix}.
\end{equation*}
Here we define the Monod expression $\mu (A,B):= A/(A+B)$ and 
\begin{align*}
\mu_7(X_{\rm S},X_{\rm B, H})
&\coloneqq \begin{cases}
0&\text{if $X_{\rm S}=0$ and $X_{\rm B, H}=0$,}\\
 \dfrac{X_{\rm S}X_{\rm B, H}}{K_{\rm X} X_{\rm B,H}+X_{\rm S}}&\text{otherwise,}
\end{cases}
\\
\mu_8(X_{\rm B, H},X_{\rm N D})
&\coloneqq \begin{cases}
0 &\text{if $X_{\rm S}=0$ and $X_{\rm B, H}=0$,}\\
 \dfrac{X_{\rm B, H}X_{\rm N D}}{K_{\rm X}X_{\rm B,H} + X_{\rm S}} &\text{otherwise.}
\end{cases}
\end{align*}

\end{document}